\documentclass[11pt]{article}

\usepackage[letterpaper, margin=1in]{geometry}
\usepackage{graphicx} 

\usepackage{amsmath}
\usepackage{amsthm}
\usepackage{thmtools}
\usepackage{amssymb}

\usepackage{comment}
\usepackage{dirtytalk}
\usepackage{dsfont}

\usepackage{tikz}
\usepackage{pgfplots}
\pgfplotsset{compat=1.18}

\usepackage{caption}
\usepackage[backend=biber, style=alphabetic]{biblatex}
\DeclareFieldFormat[article]{title}{\mkbibemph{#1}}
\usepackage{aliascnt}
\usepackage[noabbrev]{cleveref}

\crefname{equation}{}{}
\Crefname{equation}{Equation}{Equations}

\theoremstyle{definition}

\newtheorem{definition}{Definition}[section]

\newtheorem{remark}[definition]{Remark}

\theoremstyle{remark}

\newtheorem{example}[definition]{Example}

\theoremstyle{plain}

\newtheorem{assumption}{Assumption}[section]

\newtheorem{theorem}[definition]{Theorem}

\newtheorem{proposition}[definition]{Proposition}

\newtheorem{lemma}[definition]{Lemma}

\newtheorem{corollary}[definition]{Corollary}

\DeclareMathOperator{\id}{id}
\DeclareMathOperator{\Int}{Int}
\DeclareMathOperator{\CBB}{CBB}

\newcommand{\R}{\mathbb{R}}
\newcommand{\N}{\mathbb{N}}
\newcommand{\g}{\overline{g}}
\newcommand{\M}{M_\lambda}
\newcommand{\Mk}{M_{\lambda_k}}

\title{A Correspondence between Billiards and Geodesics}
\author{Daniele Giannetto\footnote{Corresponding author: Daniele Giannetto; Email: daniele.giannetto@mail.utoronto.ca}}
\date{June 2026}

\begin{document}

\maketitle

\begin{abstract}
From a geometric viewpoint, billiard trajectories and geodesics are related by mutual approximation results. In one direction, it is known that every geodesic curve on the boundary of a smooth convex body can be approximated by a sequence of billiard trajectories inside of it. We establish the other direction by proving that, for Riemannian billiard tables (under mild assumptions), there are families of fold-type surfaces such that every sequence of geodesic segments on these surfaces has a subsequence that converges to a billiard trajectory in the table. In particular, this is true for convex Euclidean tables. We also describe a more general class of tables for which this result holds and present explicit non-convex examples.
\end{abstract}

\tableofcontents

\section{Introduction}

A billiard trajectory in a Euclidean domain $K\subset \R^n$ with a smooth boundary $\partial K$ is defined as a piecewise straight line in the interior of $K$: upon reaching the boundary, it undergoes a reflection satisfying the rule that the angle of incidence equals the angle of reflection. 
It is well known that billiard dynamics in $K$ highly depends on the shape of $\partial K$; it can vary from being extremely rich to being not well-defined everywhere (see \cite{tabac}). Definitions of generalized billiards can be given in ambient non-Euclidean or pseudo-Riemannian geometries (see for instance \cite{khtab}) and for non-smooth tables (see \cite{Lange}). 
In this paper, we consider only billiard trajectories whose intersections with the boundary of the table, whenever they occur, are transverse; in particular, we exclude trajectories containing nontrivial segments in the boundary. We refer to \cite{Kourganoff} for a discussion of billiard trajectories that may be partially or entirely contained in the boundary of the table. 
For billiard tables with smooth boundary, by restricting attention to trajectories whose reflections at the boundary are transverse, the billiard evolution is well-defined and uniquely determined by the initial data.

From a geometrical perspective, billiard maps are closely related to geodesic flows. On the one hand, if $K\subset \R^n$ is a closed strictly convex Euclidean domain with sufficiently smooth boundary, it is always possible to approximate geodesics in the boundary $\partial K$ by means of billiard trajectories that live inside $K$. 
Namely, one can consider billiard trajectories with incidence angle at a given point of the boundary tending to zero, i.e. with the incoming velocity vector of the billiard trajectory approximating a given tangent vector of a geodesic on the boundary. More formally, the following result was proved in \cite{Lange}.

\begin{theorem}[Theorem B of \cite{Lange}]\label{thm:lange_thm}
  
Let $K$ be a convex body in $\R^n$ whose boundary is of class $C^{2,1}$ and has a positive definite second fundamental form at every point. Let $p_0\in \partial K$ and $\{\gamma_k\}_{k\in \N}$ be a sequence of billiard trajectories of $K$ starting from $p_0$. Then, if the initial direction of $\gamma_k$ converges to a tangent vector $v\in T_{p_0}(\partial K)$, we have that the sequence $\{\gamma_k\}_{k\in \N}$ converges locally uniformly to the corresponding geodesic of the boundary.

\end{theorem}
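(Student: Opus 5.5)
We prove the statement by comparing each billiard trajectory $\gamma_k$, parametrized by arc length, with the boundary geodesic $c$ through $(p_0,v)$. The comparison works through the sequence of bounce points and the chords joining them. Write $\theta_k$ for the angle between the initial direction $v_k$ of $\gamma_k$ and $T_{p_0}\partial K$, so that $\theta_k\to 0$. Three facts drive the argument.
\begin{itemize}
\item By strict convexity with $C^{2,1}$ boundary, principal curvatures lie in some $[\kappa_{\min},\kappa_{\max}]$ with $\kappa_{\min}>0$.
\item A chord leaving $\partial K$ at angle $\theta$ has length $\ell\approx 2\theta/\kappa(p,u)$, where $\kappa(p,u)$ is the normal curvature in the direction $u$ of the chord's tangential projection.
\item Angles of incidence and reflection agree, so consecutive bounce angles satisfy $\theta_{j+1}=\theta_j+O(\theta_j^2)$. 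The error constant depends only on the $C^{2,1}$ norm of $\partial K$.
\end{itemize}
The first step is to establish these local estimates. Represent $\partial K$ near each point as a graph over its tangent plane, and expand to second order with a Lipschitz remainder on the Hessian.

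The second step is an adiabatic-type control. Fix a time horizon $T$ and consider all bounces of $\gamma_k$ up to arc length $T$. There are about $T/\ell\sim T\kappa/\theta_k$ of them. Summing the per-bounce drift $O(\theta^2)$ over $O(1/\theta)$ bounces shows that $\theta$ stays comparable to $\theta_k$ throughout: it is bounded between $c\,\theta_k$ and $C\,\theta_k$. In particular it tends to $0$ uniformly on $[0,T]$. The summation alone gives this only for $T$ small, with constants depending on $\kappa_{\max}/\kappa_{\min}$. For arbitrary $T$ I would instead use the approximate invariant $\theta^{2}/\kappa$ type quantity (Lazutkin-type). Short of that, I would iterate the small-$T$ bound a finite number of times, noting that the constants stay controlled because only finitely many steps are needed for fixed $T$.

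The third step derives the limit equation. Let $\sigma_k(t)$ be the nearest-point projection of $\gamma_k(t)$ onto $\partial K$. Since each chord rises to height only $O(\theta_k^2)$ above the boundary, $|\gamma_k-\sigma_k|=O(\theta_k^2)$. Along a single chord the tangential velocity changes only by $O(\theta_k)$. At each bounce, the reflection reverses the normal component and leaves the tangential component unchanged. Hence the polygonal curve of bounce points has unit-speed tangent $w_j$ with $w_{j+1}-w_j$ essentially normal to $\partial K$ and of size $O(\theta_k)$, with tangential part $O(\theta_k^2)$ relative to the tangent space. This makes the curves $\sigma_k$ equi-Lipschitz, so Arzelà--Ascoli gives a subsequence converging uniformly on $[0,T]$ to some limit curve $\sigma$. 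The limit is $C^1$, and the tangential component of its acceleration vanishes: summing $O(\theta_k^2)$ over $O(1/\theta_k)$ bounces gives a tangential drift of $O(\theta_k)$ per unit time. Formally, a discrete test against smooth tangent vector fields shows $\nabla^{\partial K}_{\dot\sigma}\dot\sigma=0$ weakly, and hence strongly. Thus $\sigma$ is a geodesic of $\partial K$ with $\sigma(0)=p_0$ and $\dot\sigma(0)=v$, since $v_k\to v$.

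Finally, geodesics of the $C^{2,1}$ (so $C^{1,1}$-metric) surface $\partial K$ are unique given initial data, by Picard--Lindelöf with a Lipschitz Christoffel field. Every subsequence therefore has the same limit $c$, so the whole sequence converges uniformly on $[0,T]$ for each $T$, which is local uniform convergence. The main obstacle is the second step: for long times, the incidence angle must neither collapse nor blow up. This is where the $C^{2,1}$ regularity and positive definiteness are genuinely used. I expect to handle it with the Lazutkin-type quantity, whose variation per bounce is $O(\theta^3)$.
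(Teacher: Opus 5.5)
Your proposal is correct in outline, but it takes a genuinely different route from the paper.

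\textbf{The paper's route.} The sketch in Appendix B is soft and metric. It imports from Lange (Lemma 5.3) that the $\gamma_k$ converge to a curve in $\partial K$ uniformly on compact time sets. It then notes that billiard trajectories are $0$-quasigeodesics of $K$ (gluing at polar directions) and passes to the limit by stability of quasigeodesics. Lange's Lemma 5.5 identifies the limit as a $0$-quasigeodesic of $\partial K$ with its intrinsic metric. The conclusion follows because quasigeodesics of a manifold without boundary are geodesics.

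\textbf{Your route.} You do everything by hand. Your angle control in the second step is essentially a proof of Lange's Lemma 5.3. Your observation that the reflection kick $w_{j+1}-w_j=-2\langle w_j,\nu(q_{j+1})\rangle\,\nu(q_{j+1})$ is exactly normal to $\partial K$ replaces Lemma 5.5 and the quasigeodesic machinery. In their place you use a discrete geodesic equation plus Picard--Lindel\"of.

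\textbf{What each buys.} Your approach is self-contained and quantitative, e.g.\ $|\gamma_k-\sigma_k|=O(\theta_k^2)$. It shows exactly where positive definiteness (short chords) and $C^{2,1}$ (Lipschitz Christoffel symbols, $O(\theta^2)$ angle drift) enter. It also gives convergence of velocities, hence that the limit starts with velocity $v$, which the paper's sketch simply asserts. The paper's route is much shorter and reuses the framework of the rest of the paper, but it relies on nontrivial results of Lange and Petrunin.

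\textbf{First correction: long times are not an obstacle.} From $|\theta_{j+1}-\theta_j|\le C\theta_j^2$ and $\ell_j\ge c\,\theta_j$ (from $\kappa\le\kappa_{\max}$) you get $|\log\theta_{j+1}-\log\theta_j|\le C'\ell_j$. Hence $\theta_k e^{-C'(T+1)}\le\theta\le\theta_k e^{C'(T+1)}$ up to time $T$, for every $T$. A trivial bootstrap keeps $\theta$ in the range where the local estimates hold once $k$ is large.

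So drop the Lazutkin quantity. As written it is off: in the planar case the adiabatic invariant is $\theta\kappa^{-1/3}$, not $\theta^2/\kappa$. Moreover, an $O(\theta^3)$ per-bounce variation needs more smoothness than $C^{2,1}$; with merely Lipschitz curvature the cancellation only gives $O(\theta^2)$.

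\textbf{Second correction: the $C^1$ limit.} Equi-Lipschitz $\sigma_k$ alone does not give a $C^1$ limit. What does is the following. The derivative $\sigma_k'$ is continuous across bounces, because the tangential projections of $w_j$ and $w_{j+1}$ at $q_{j+1}$ coincide. It is also uniformly Lipschitz along chords, since the nearest-point projection is $C^{1,1}$ near $\partial K$. So the $\sigma_k'$ are equi-Lipschitz and converge as well. Their tangential acceleration is $O(\theta_k)$ pointwise, which gives normality of $\sigma''$ in the limit.
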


For the sake of completeness of this paper, a proof sketch of this theorem is presented in Appendix B.

On the other hand, billiard trajectories can also be regarded as limits of sequences of geodesic segments. For instance, a billiard trajectory in the closed unit disk $\overline D\subset \R^2$ can be regarded as the limit of a sequence of geodesic segments, each lying on an ellipsoid of revolution whose one semiaxes tends to 0 (see \cite{birkhoff}). More formally, let $K = \{(x,y,0)\in \R^3: \ x^2 +y^2\leq 1\}$ be the closed unit disk in the $xy$-plane of the Euclidean space $\R^3$. For each $\lambda\in (0,1)$, we let
$$M_\lambda =\left\{(x,y,z)\in \R^3:\ x^2 + y^2 + \frac{z^2}{\lambda^2} = 1\right\}$$
be the ellipsoid whose $x$ and $y$ semiaxes have length $1$ and whose $z$ semiaxis vanishes as $\lambda\rightarrow 0^+$ (see \Cref{fig:ellipsoids}). Then, one can prove (see \Cref{cor:main_corollary_sec3}) that every billiard trajectory that touches $\partial K$ transversally can be recovered as a cluster point of a family of geodesic segments $\{\gamma_\lambda: [-T,T]\rightarrow M_\lambda\}_{\lambda\in (0,1)}$.

\begin{figure}
  \includegraphics[width=5.2cm]{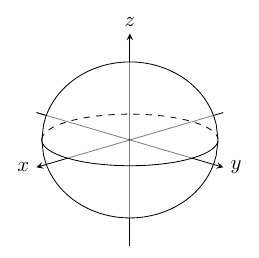}
  \hfill
  \includegraphics[width=5.2cm]{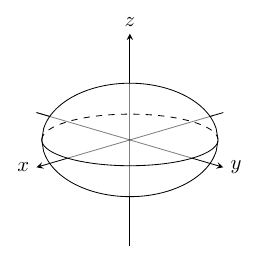}
  \hfill
  \includegraphics[width=5.2cm]{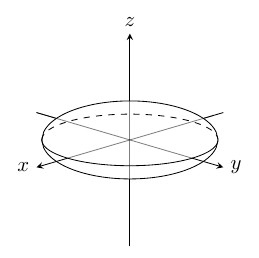}
\caption{Ellipsoids in $\R^3$ with the $x$ and $y$ semiaxes of unit length and with the $z$-semiaxes of length: 1, 0.7, 0.4 respectively.}\label{fig:ellipsoids}
\end{figure}

The purpose of this paper is to prove that a similar construction and convergence result can be carried out for a general Riemannian billiard table under mild curvature assumptions. Recent advances in this direction can be found in \cite{Kourganoff}, in which the author treats extensively the case of two-dimensional billiard tables and explores hyperbolic type properties of the billiard dynamics.

In what follows, we briefly describe the structure of the present work.

Section 2 establishes notations and collects well-known results and constructions from the theory of metric geometry that will be important in section 3. In particular, we will focus on the notion of quasigeodesic curves in Riemannian manifolds with boundary that satisfy a lower curvature bound. Those curves are strongly related to both geodesics and billiard trajectories. Details and proofs for the results presented in section 2 can be found in \cite{PetruninSemiconcave}, \cite{Course}, \cite{Lange} and references therein.\vspace{0.1cm}

Section 3 is the core of the present work. First, we give rigorous definitions of a billiard table $K$, namely, the closure of an open connected subset of $\R^n$ endowed with the Riemannian metric induced by the ambient space and having a boundary of class $C^{2,1}$, and of a billiard trajectory $\gamma:[-T,T]\to K$ within it. In this paper, a billiard trajectory is defined as a concatenation of finitely many locally length-minimizing segments satisfying the billiard reflection law whenever two consecutive segments meet at the boundary of the table. We emphasize that each segment is parameterized by arclength. \Cref{thm:quasigeodesic_characterization} describes in detail the relation between billiard trajectories and quasigeodesic curves introduced in section 2. Next, if $H\subset \R^{n+1}$ is a hyperplane and $K\subset H$ is a billiard table, we construct a sufficiently general notion of family of folds over $K$, denoted $(M_\lambda,g_\lambda)$, $\lambda\in (0,1)$, which is made of hypersurfaces that \say{flatten} onto some portion of $K$ in a fashion similar to the family of ellipsoids presented above. We also provide an explicit example of such a family of folds, which will be the main focus of section 4. 
In this context, the following theorem summarizes the first main result of the paper.

\begin{theorem}\label{thm:convex_billiard_intro}
  
Let $K\subset H\subset \R^{n+1}$ be a convex Euclidean billiard table whose boundary is of class $C^{2,1}$. Then, there is $T>0$ such that every billiard trajectory $\gamma:[-T,T]\to K$ touching $\partial K$ at $p_0 := \gamma(0)$ transversally can be recovered as the uniform limit of a sequence of arclength-parameterized geodesic segments
$$\left\{\gamma_k:[-T,T]\rightarrow M_{\lambda_k}\right\}_{k\in \N}$$
with $\gamma_k(0)=p_0$ for all $k\in\N$ and where $\lambda_k\to 0^+$ as $k\to \infty$.
  
\end{theorem}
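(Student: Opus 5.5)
The plan is to realize each fold $M_\lambda$ as a double of $K$ in the limit, and to combine compactness with the characterization of billiard trajectories as quasigeodesics (\Cref{thm:quasigeodesic_characterization}).

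First, fix the family of folds $(M_\lambda,g_\lambda)$ over $K$, for instance the boundary of a $\lambda$-thickening of $K$ in the normal direction to $H$, analogous to the ellipsoids above. The key properties needed are:
\begin{itemize}
\item each $M_\lambda$ is the boundary of a convex body in $\R^{n+1}$, so by Alexandrov's theorem it has curvature $\geq 0$;
\item $M_\lambda$ converges in the Gromov--Hausdorff sense (indeed, as subsets of $\R^{n+1}$) to the doubled table, which for convex $K$ is just $K$ traversed twice.
\end{itemize}
Near $p_0$, the point $p_0$ lies on the fold edge, and each $M_\lambda$ contains $p_0$.

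Given a billiard trajectory $\gamma$ transverse at $p_0$, with incoming direction $v^-$ and outgoing direction $v^+$, I would choose initial vectors $w_\lambda\in T_{p_0}M_\lambda$ as follows. Near $p_0$, $M_\lambda$ is approximately a thin cylinder over $\partial K$. Take the unit vector whose projection to $H$ is the direction bisecting the reflection, that is, the component of $v^+$ tangent to $\partial K$, and whose vertical part is tuned so that the projection of the geodesic reproduces $\gamma$. By symmetry under the reflection $z\mapsto -z$, it is more natural to start $\gamma_\lambda$ at a point of the top sheet and shoot it backwards and forwards. Define $\gamma_\lambda$ as the geodesic with these data, restricted to $[-T,T]$.

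Next, the curves $\gamma_\lambda$ are $1$-Lipschitz and take values in a fixed compact set, so by Arzel\`a--Ascoli a subsequence $\lambda_k\to0$ converges uniformly to some $\gamma_\infty:[-T,T]\to K$. Geodesics in spaces with curvature $\geq 0$ are quasigeodesics, and the Petrunin stability theorem says that limits of quasigeodesics under a noncollapsing GH convergence with a uniform lower curvature bound are again quasigeodesics. Here the convergence to the doubled $K$ is noncollapsing, since $M_\lambda$ and the double of $K$ have the same dimension $n$. Hence $\gamma_\infty$ is a quasigeodesic of the double, and its projection is a quasigeodesic of $K$. By \Cref{thm:quasigeodesic_characterization}, a quasigeodesic that meets $\partial K$ transversally at $p_0$ with the matching initial data is the billiard trajectory $\gamma$. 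The constant $T$ is chosen small enough that $\gamma$ has at most the single transverse bounce at $0$, uniformly by compactness of $\partial K$ and $C^{2,1}$ regularity.

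The main obstacle will be identifying the limit's initial data: one must show that the limit exits $p_0$ in direction $v^+$ (and arrived with $v^-$), rather than being trapped on $\partial K$ as in \Cref{thm:lange_thm}. I would first attempt to handle this by choosing $w_\lambda$ to converge to the reflected data in the double, so that the limiting directions at $p_0$ are forced, and by using transversality together with uniqueness of quasigeodesic continuation away from the boundary to exclude tangential limits.
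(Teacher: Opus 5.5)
\paragraph{Main gap: the limit's initial direction.} Your curvature input and your compactness and stability step are fine. The curvature part holds for the paper's folds $\{x_{n+1}^2=\lambda^2 f(x)\}$, which are the family the statement refers to; the paper gets $\sec\ge 0$ from the Gauss equation with $D^2f\le 0$ in \Cref{prop:convex_billiards}. But the step you flag as the main obstacle is a genuine gap, and nothing in the proposal closes it.

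\begin{itemize}
\item Uniform convergence of $\gamma_{\lambda_k}$, together with the quasigeodesic property of the limit, says nothing about the direction in which the limit leaves $p_0$. The limit is merely some quasigeodesic of $D(K)$ through $p_0$, and such curves include ones lying in $\partial K$. For strictly convex $K$, geodesics of $\partial K$ are quasigeodesics of $D(K)$, being limits of lifted near-grazing billiard trajectories as in \Cref{thm:lange_thm}.
\item Initial velocities do not pass to uniform limits. There is also no meaningful sense in which $w_\lambda$ ``converges to the reflected data in the double''. The maps $\Phi_\lambda(q)=(\pi_K(q),\lambda\sigma(q))$, with $\sigma=\pm\sqrt f$, are not differentiable along $\partial K$, so they give no identification of $T_{p_0}D(K)$ with $T_{p_0}M_\lambda=T_{p_0}\partial K\oplus\R e_{n+1}$.
\item The geometry of $M_\lambda$ degenerates exactly at $p_0$: for the ellipsoids the Gaussian curvature on the equator is $\lambda^{-2}$. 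So continuous dependence on initial data is not uniform in $\lambda$.
\item There is nothing to ``tune''. A unit vector in $T_{p_0}M_\lambda$ with horizontal part $\tilde v$ has vertical part $\pm r$, so $\tilde v + r\,e_{n+1}$ is forced. Showing that this geodesic exits the edge region in direction $v^+$ would need a quantitative analysis of the geodesic flow in the $O(\lambda^2)$-layer around $\partial K$, which is absent.
\end{itemize}

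\paragraph{How the paper avoids this.} The paper never uses initial directions; it prescribes an endpoint instead (\Cref{cor:main_corollary_sec3}).
\begin{enumerate}
\item Fix $p_*=\gamma(t_*)$, with $t_*$ so small that $\gamma|_{[0,t_*]}$ is the unique shortest path from $p_0$ to $p_*$. Take a lift $\tilde p_*\in D(K)$ and set $p_k=\Phi_k(\tilde p_*)$.
\item Let $\gamma_k$ be the extension to $[-T,T]$ of a $g_k$-minimizing geodesic from $p_0$ to $p_k$.
\item Since $d_k\to d_{D(K)}$ uniformly, $t_k=d_{g_k}(p_0,p_k)\to t_*$. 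Hence every subsequential limit passes through $p_*$ at time $t_*$ and has length $t_*$ on $[0,t_*]$, so it coincides with $\gamma$ there.
\item \Cref{prop:billiard_uniqueness} then propagates the equality to all of $[-T,T]$.
\end{enumerate}

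\paragraph{Two further errors.} Two other points of your plan fail as stated.
\begin{itemize}
\item No $T$, uniform over trajectories transverse at $p_0$, gives each of them a single bounce in $[-T,T]$. In the unit disk, a chord leaving $p_0$ at angle $\theta$ to the tangent has length $2\sin\theta$. So propagation across later bounces is indispensable, not optional.
\item Starting $\gamma_\lambda$ at a point of the top sheet is incompatible with the requirement $\gamma_k(0)=p_0$.
\end{itemize}
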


Notice that the above theorem generalizes the convergence result presented before for the family of ellipsoids with a vanishing semiaxis to every convex Euclidean billiard table with $C^{2,1}$ boundary. Also, this theorem can be generalized for certain non-convex billiard tables (at least in low dimension) and its proof is postponed to section 4.

Moreover, it turns out that the same result holds for a wider class of Riemannian billiard tables and families of folds, under at least one of the following extra assumptions:

\begin{assumption}\label{ass:assumption1_intro}
There exists a $\kappa\in \R$ such that, for every $\lambda\in (0,1)$, there is a compact neighborhood of $p_0$ in $M_{\lambda}$ which is a space of curvature bounded below by $\kappa$.
\end{assumption}

\begin{assumption}\label{ass:assumption2_intro}
There is $\tilde T>0$ such that, for every $\lambda\in (0,1)$, each geodesic segment $\gamma:[-\tilde T,\tilde T]\to M_\lambda$ with $\gamma(0) = p_0$ is $g_\lambda$-length minimizing.
\end{assumption}

Then, we prove the following results.

\begin{theorem}\label{thm:compactness_intro}
  Assume that the family $\{(M_\lambda,g_\lambda)\}$ satisfies at least one of \cref{ass:assumption1_intro} or \cref{ass:assumption2_intro}. Then, for each decreasing sequence $\{\lambda_k\}_{k\in \N}\subset (0,1)$ with $\lambda_k\to 0^+$, there exists $T>0$ such that every sequence of arclength-parameterized geodesic segments
  $$\left\{\gamma_k:[-T,T]\rightarrow \Mk\right\}_{k\in \N},$$
  that satisfies $\gamma_k(0)=p_0$ for all $k\in\N$, admits a subsequence $\{\gamma_m\}_{m\in \N}$ that converges to a continuous curve $\gamma:[-T,T]\rightarrow K$ uniformly in $[-T,T]$ as $m\rightarrow \infty$. Moreover, if $\gamma$ intersects $\partial K$ transversally, then it is a billiard trajectory of $K$.
\end{theorem}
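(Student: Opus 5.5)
The plan has two parts: first extract a uniformly convergent subsequence by Arzelà--Ascoli, then identify the limit through the quasigeodesic characterization of billiard trajectories, \Cref{thm:quasigeodesic_characterization}.

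\textbf{Compactness.} Each $\gamma_k$ is parameterized by arclength, so it is $1$-Lipschitz for the intrinsic metric of $\Mk$. The ambient Euclidean distance of $\R^{n+1}$ is bounded by the intrinsic one, so each $\gamma_k$ is also $1$-Lipschitz into $\R^{n+1}$. Since $\gamma_k(0)=p_0$, all images lie in the closed ball $\overline B(p_0,T)$, so the family is equicontinuous and uniformly bounded. Arzelà--Ascoli then gives a subsequence $\gamma_m$ converging uniformly to a $1$-Lipschitz curve $\gamma$. By the definition of a family of folds, the surfaces $M_\lambda$ flatten onto (a portion of) $K$ as $\lambda\to0^+$, i.e.\ they converge to $K$ in the Hausdorff sense near $p_0$. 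Hence $\gamma$ takes values in $K$. Here $T$ is chosen so small that $\overline B(p_0,T)$ lies in the neighborhood where the fold structure, and the relevant assumption, are in force.

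\textbf{Identifying the limit.} The goal is to show that $\gamma$ is a quasigeodesic of $K$ (in the sense of Section 2) and parameterized by arclength. \Cref{thm:quasigeodesic_characterization} then says that a quasigeodesic meeting $\partial K$ transversally is a billiard trajectory. The two assumptions are handled separately.

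Under \cref{ass:assumption1_intro}, the compact neighborhoods of $p_0$ in $\Mk$ are $\CBB(\kappa)$ with $\kappa$ independent of $k$. They converge in the Gromov--Hausdorff sense to a neighborhood of $p_0$ in the doubled table (two copies of $K$ glued along the boundary), which folds onto $K$. Geodesics of a Riemannian manifold are quasigeodesics, and Petrunin's stability theorem says that limits of quasigeodesics under noncollapsed GH convergence of $\CBB(\kappa)$ spaces are quasigeodesics. Thus the limit of the $\gamma_m$ in the doubled table is a quasigeodesic. Its projection to $K$ then has to be identified with a quasigeodesic of $K$ using the reflection symmetry of the double; I expect this to follow from the results collected in Section 2.

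Under \cref{ass:assumption2_intro}, take $T\le\tilde T$. Each $\gamma_m$ is length minimizing in $\Mk$, so lengths are additive and $d_{\Mk}(\gamma_m(s),\gamma_m(t))=|t-s|$. The intrinsic metrics converge to the length metric of the limit, and passing to the limit should give that $\gamma$ is locally minimizing in the double. A minimizing curve is trivially a quasigeodesic, and the transversality hypothesis then lets us read off the reflection law.

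\textbf{Main obstacle.} The hardest step is controlling the metrics rather than just the images. One needs that intrinsic distances on $\Mk$ converge to the intrinsic distance on the double of $K$. This would simultaneously give that $\gamma$ is still parameterized by arclength (no length is lost in the limit) and justify the stability arguments above. I would first try to prove it from the definition of folds, via a $C^1$-small graph description of $\Mk$ over $K$ away from a shrinking collar of $\partial K$, plus uniform control of lengths inside the collar. Without such control, the limit could be a shortcut curve, and the identification of $\gamma$ as a quasigeodesic would break down.
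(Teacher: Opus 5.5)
Your overall structure (compactness, then stability of quasigeodesics or of minimizers, then \Cref{thm:quasigeodesic_characterization}) matches the paper's, but as written there is a gap where the limit has to be placed in the double.

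Arzel\`a--Ascoli in $(\R^{n+1},d_{\g})$ gives a limit $\gamma$ in $K$, but that limit forgets which sheet of $D(K)$ the curve lies on: the last coordinate of $\gamma_m$ tends to $0$, and its sign may oscillate in $m$. However, \Cref{prop:quasigeodesic_limit}, \Cref{prop:geodesic_convergence} and \Cref{thm:quasigeodesic_characterization} all need a curve $\tilde\gamma$ in $D(K)$ that is a uniform limit with respect to $d_{D(K)}$. You defer exactly this to your ``main obstacle''. You then propose to prove convergence of the intrinsic distances through a $C^1$-small graph description of $\Mk$, which the abstract \Cref{def:family_folds} does not supply.

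In fact that convergence is an axiom. The last item of \Cref{def:family_folds} states that $d_k(q_1,q_2):=d_{g_k}(\Phi_k(q_1),\Phi_k(q_2))\to d_{D(K)}(q_1,q_2)$ uniformly on $C\times C$; Appendix A verifies it for the explicit folds. With it, the paper closes the step in a few lines:
\begin{enumerate}
\item Take $T<T_0$. By the definition, and not by your choice of a small ambient ball, this gives $\gamma_k([-T,T])\subset\Int_{\Mk}(\Phi_k(C))$.
\item Put $\tilde\gamma_k:=\Phi_k^{-1}\circ\gamma_k$ and $\varepsilon_k:=\sup_{C\times C}|d_k-d_{D(K)}|\to 0$. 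Then $d_{D(K)}(\tilde\gamma_k(t),\tilde\gamma_k(s))\le |t-s|+\varepsilon_k$.
\item Treating the finitely many $k$ with large $\varepsilon_k$ separately, this gives equicontinuity in the compact space $(C,d_{D(K)})$, hence a subsequence $\tilde\gamma_m\to\tilde\gamma$.
\item Then $\gamma_m\to\gamma:=\pi_K\circ\tilde\gamma$ in $d_{\g}$, because $\sup_C d_{\g}(\Phi_m(q),\pi_K(q))\to 0$ and $\pi_K$ is $1$-Lipschitz.
\end{enumerate}
The fixed-set stability results of Section 2.4 then apply directly to $(C,d_k)\to(C,d_{D(K)})$, and no Gromov--Hausdorff theory is needed. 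Your worry about losing length also disappears; under \cref{ass:assumption2_intro}, for instance, one gets $d_{D(K)}(\tilde\gamma(s),\tilde\gamma(t))=|t-s|$.

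Two further corrections.

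First, \Cref{thm:quasigeodesic_characterization} takes a $\kappa$-quasigeodesic of $D(K)$ as input and concludes that its projection is a billiard trajectory. Under \cref{ass:assumption1_intro} you apply it to $\tilde\gamma$ itself, so your extra step ``the projection is a quasigeodesic of $K$'' is unnecessary.

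Second, under \cref{ass:assumption2_intro}, the claim that a minimizing curve is trivially a quasigeodesic holds only in a $\CBB(\kappa)$ space. This assumption is meant for tables like the concave ones of Section 4, where the fold curvatures are unbounded below and $D(K)$ has no lower curvature bound. There you should read off the billiard property directly from the $d_{D(K)}$-minimality of $\tilde\gamma$, which \Cref{prop:geodesic_convergence} gives once $T\le\tilde T$. On intervals avoiding $\partial K$ the curve minimizes inside one sheet, so it is a geodesic. At a transverse crossing, the first variation of length under sliding the crossing point along $\partial K$ forces the tangential components of $c^+$ and $c^-$ to cancel, which is the reflection law. (The paper handles this case briefly too, citing \Cref{thm:quasigeodesic_characterization} ``and the preceding discussion''.)
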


\begin{theorem}\label{thm:main_theorem}

Assume that the family $\{(M_\lambda,g_\lambda)\}$ satisfies at least one of \cref{ass:assumption1_intro} or \cref{ass:assumption2_intro}. Then there is $T>0$ such that every billiard trajectory $\gamma:[-T,T]\to K$ touching $\partial K$ at $p_0 := \gamma(0)$ transversally can be recovered as the uniform limit of a sequence of arclength-parameterized geodesic segments
$$\left\{\gamma_k:[-T,T]\rightarrow M_{\lambda_k}\right\}_{k\in \N}$$
with $\gamma_k(0)=p_0$ for all $k\in\N$ and where $\lambda_k\to 0^+$ as $k\to \infty$.

\end{theorem}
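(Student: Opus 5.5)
The plan is to combine the compactness statement of \Cref{thm:compactness_intro} with a shooting/continuity argument on initial directions at $p_0$. Fix $T>0$ as given by \Cref{thm:compactness_intro}, shrinking it if necessary so that every billiard trajectory through $p_0$ on $[-T,T]$ has exactly one reflection, namely at $t=0$. Let $\gamma$ be such a trajectory, with incoming direction $u^-=\dot\gamma(0^-)$ and outgoing direction $u^+=\dot\gamma(0^+)$. Transversality makes these distinct, and they are mirror images with respect to $T_{p_0}\partial K$. For each $\lambda$, consider the geodesics of $M_\lambda$ through $p_0$. Near $p_0$ the fold $M_\lambda$ has two sheets collapsing onto $K$, so $T_{p_0}M_\lambda$ is a hyperplane in $\R^{n+1}$ containing $T_{p_0}\partial K$. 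Its unit vectors project onto unit vectors of $H$ that are tangent to the boundary or point inward. The natural guess is to take the initial velocity $v_\lambda\in T_{p_0}M_\lambda$ whose projection to $H$ (after normalisation) is the outgoing direction $u^+$, or more precisely whose two one-sided limits project to $u^+$ and $u^-$.

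The key steps, in order, are the following.

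\textbf{Step 1.} Define a map $\Phi_\lambda:S_{p_0}M_\lambda\to S_{p_0}H$ sending an initial direction to the direction of the $H$-projection of $\dot\gamma_v(t)$ for small $t>0$. Analogously, define $\Phi_\lambda^-$ for $t<0$.

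\textbf{Step 2.} Use \Cref{thm:compactness_intro} to show the following. For any $\lambda_k\to0$ and any choice $v_k\in S_{p_0}M_{\lambda_k}$, a subsequence of $\gamma_{v_k}$ converges uniformly to a curve $\gamma_\infty$. This curve is a billiard trajectory whenever it is transverse to $\partial K$, in which case its one-sided directions at $0$ are limits of the projected directions of $v_k$. Here I would use that the limit of unit-speed curves in the Hausdorff sense is $1$-Lipschitz. I would also use that the projections to $H$ of $v_k$ converge, because $M_\lambda$ converges to $K$ in $C^1$ away from $\partial K$ in the fold construction.

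\textbf{Step 3.} Choose $v_k$ to be the direction in $T_{p_0}M_{\lambda_k}$ whose orthogonal projection onto $H$ is parallel to $u^+$. This is possible since $T_{p_0}M_\lambda$ is a hyperplane containing $T_{p_0}\partial K$ and projects onto the inward-pointing half of $H$ modulo that subspace. Since the geodesic of $M_\lambda$ through $p_0$ with direction $v_k$, followed backwards, is the geodesic with direction $-v_k$, its backward projection tends to the reflected direction. Any limit curve $\gamma_\infty$ then satisfies $\gamma_\infty(0)=p_0$ and $\dot\gamma_\infty(0^+)=u^+$. Transversality is inherited from $u^+$, so by Step 2 $\gamma_\infty$ is a billiard trajectory.

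\textbf{Step 4.} By uniqueness of billiard trajectories with given transverse initial data (noted in the introduction), $\gamma_\infty=\gamma$. Since every subsequence has a further subsequence converging to $\gamma$, the whole sequence converges uniformly.

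The main obstacle is Step 2/3: identifying the initial direction of the limit curve from the initial data $v_k$. Uniform convergence alone does not give convergence of derivatives at the singular point $p_0$. I would handle this using the quasigeodesic characterization \Cref{thm:quasigeodesic_characterization} together with the one-sided direction being well defined for quasigeodesics in spaces with curvature bounded below (Assumption \ref{ass:assumption1_intro}). Under Assumption \ref{ass:assumption2_intro}, I would instead use that minimizing segments converge to minimizing curves, so the distance $|\gamma_k(t)-p_0|=t$ passes to the limit. This pins down the limiting direction via first variation on short intervals.
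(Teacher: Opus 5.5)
There is a genuine gap, and it sits where you locate the main obstacle. You try to identify the limit curve through its initial direction, but the folds give you no usable control of directions at $p_0$.

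\textbf{The geometry of Steps 1 and 3 is off.} For the explicit fold $x_{n+1}^2=\lambda^2 f(x)$, the normal at $p_0=(x_0,0)$ is $(-\lambda^2\nabla f(x_0),0)$. Hence $T_{p_0}M_\lambda=T_{p_0}\partial K\oplus\R e_{n+1}$ for every $\lambda$; for the ellipsoids at $(1,0,0)$, the tangent plane is spanned by $e_y,e_z$. The orthogonal projection of $T_{p_0}M_\lambda$ onto $H$ is therefore exactly $T_{p_0}\partial K$. So no $v_k$ projects onto a transverse $u^+$. Likewise, your map of Step 1 is, at fixed $\lambda$ and as $t\to 0^+$, just $v\mapsto P(v)/|P(v)|$ with $P$ the orthogonal projection onto $H$, so it only takes values tangent to $\partial K$.

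The billiard angle appears only at times $t\gg\lambda$. In the half-space model the fold is a flat parabolic cylinder, and the initial direction $\xi+b\,e_{n+1}$ produces a limit with outgoing direction $\xi+|b|\nu$, mirroring $d_{p_0}\pi_K$. Proving such a correspondence in general would need a quantitative analysis of the collapsing region that neither assumption supplies. It cannot even be formulated for an abstract family of folds as in \Cref{def:family_folds}, where $\Phi_\lambda$ is only a topological embedding with metric convergence. Your appeal to $C^1$ convergence away from $\partial K$ does not help, since $v_k$ sits at $p_0\in\partial K$.

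\textbf{Your fixes for Step 2 do not identify the direction.} Under \cref{ass:assumption2_intro}, passing $d(\gamma_k(t),p_0)=t$ to the limit only shows that the limit is a shortest path issuing from $p_0$. That holds for every direction, so it singles out none. Under \cref{ass:assumption1_intro}, the limit quasigeodesic does have one-sided directions, but nothing says they are limits of the approximants' initial directions. Angles are only semicontinuous under such collapsing limits, and here the projected initial directions are tangential while the limit's are not.

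\textbf{The missing idea, which is what the paper does, is to control an \emph{endpoint} rather than a direction.} This uses only the convergence $d_k\to d_{D(K)}$.
\begin{itemize}
\item Choose $t_*>0$ so small that $\gamma|_{[0,t_*]}$ is the unique shortest path from $p_0$ to $p_*:=\gamma(t_*)$.
\item Lift $p_*$ to $\tilde p_*\in C$, put $p_k:=\Phi_k(\tilde p_*)$, and let $c_k$ be a $g_k$-shortest path from $p_0$ to $p_k$ in $\Phi_k(C)$.
\item By the $T_0$-extension property, $c_k$ is a genuine geodesic of $M_k$, which you extend to $[-T,T]$.
\item Since $t_k=d_k(p_0,\tilde p_*)\to d_{D(K)}(p_0,\tilde p_*)=t_*$ and $p_k\to p_*$, any limit $\hat\gamma$ given by the compactness theorem satisfies $\hat\gamma(t_*)=p_*$.
\item Being $1$-Lipschitz, $\hat\gamma|_{[0,t_*]}$ must be that unique shortest path, i.e.\ it coincides with $\gamma$ there.
\item Uniqueness of billiard trajectories propagates the equality to $[-T,T]$, after which your Step 4 subsequence argument applies.
\end{itemize}

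A minor point: you cannot shrink $T$ uniformly so that every transverse trajectory through $p_0$ reflects only at $t=0$. Nearly tangent trajectories, for example in a convex table, reflect again arbitrarily soon. This is harmless, because the uniqueness argument handles finitely many reflections.
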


Section 4 presents the proof of \Cref{thm:convex_billiard_intro} and an analogous result for a certain class of non-convex tables. 

Appendix A contains technical details of the construction of the particular family of folds introduced in section 3, while Appendix B contains a proof sketch of \Cref{thm:lange_thm}.

\begin{paragraph}{Strategy of the proof of \Cref{thm:main_theorem}}
The proof of \Cref{thm:main_theorem} is presented in section 3.3 and it is divided into two main steps. 

First, we prove \cref{thm:compactness_intro} using a standard compactness argument and we establish that each sequence of arclength-parameterized geodesic segments whose elements pass through $p_0$ and lie in different folds of the same family admits a subsequence that converges to a continuous curve of the billiard table. Moreover, if such a limit curve touches $\partial K$ transversally, then it must be a billiard trajectory. The conclusion is then obtained as a consequence of the preliminary results presented in section 2 (see \Cref{prop:geodesic_convergence}, \Cref{prop:quasigeodesic_limit}) and in section 3 (see \Cref{thm:quasigeodesic_characterization}). Notice that these results can be applied only provided that at least one between \Cref{ass:assumption1_intro} and \Cref{ass:assumption2_intro} holds.

Then, if we fix a billiard trajectory in $K$ that touches $\partial K$ transversally, we show that it is possible to construct a sequence of arclength-parameterized geodesic segments satisfying the assumptions of the first step which then converges uniformly to the fixed billiard trajectory (up to a subsequence). Here we emphasize the importance of starting with a trajectory that touches $\partial K$ transversally, as uniqueness of the billiard flow in $K$ plays a crucial role in the proof.

\end{paragraph}

\paragraph{Acknowledgements}
I am grateful to Boris Khesin for bringing this problem to my attention and for many fruitful discussions and helpful suggestions throughout the project. I thank Sergei Tabachnikov and Anton Petrunin for the helpful discussions and advice. 

\section{Preliminaries}

\subsection{Notation and conventions}

In the following definitions, let $\Omega\subseteq \R^n$ be a connected set with non-empty interior.

\begin{definition}

A function $f:\Omega\to \R$ is said to be \emph{of class $C^{2,1}$ on $\overline \Omega$}, denoted $f\in C^{2,1}(\overline \Omega)$, if $f$ is of class $C^2$ in $\Int(\Omega)$, with Lipschitz continuous second derivatives. Moreover, the function $f$ and all of its first and second derivatives extend continuously up to the boundary of $\Omega$. 

\end{definition}

\begin{definition}

We say that $\partial \Omega$ is \textit{of class $C^{2,1}$} if for every $x_0\in \partial \Omega$ there is an open bounded neighborhood $U$ of $x_0$ in $\R^n$ and a function $f\in C^{2,1}(\overline \Omega)$ such that
$$\overline{\Omega}\cap U = \{x\in U: f(x)\ge 0\}, \quad \partial \Omega \cap U = f^{-1}(0),$$
and $f$ has a regular value at $0$, namely $\nabla f(x) \neq 0$ for all $x\in f^{-1}(0)$.

\end{definition}

In what follows we let $(M,g)$, with $M\subset \R^N$, be a smooth orientable $n$-dimensional Riemannian manifold with or without boundary.

\begin{definition}\label{def:space_curvature_below}

We say that $(M,g)$ is a \textit{space of curvature bounded below by $\kappa\in \R$} if 
\begin{itemize}
  \item for every $p\in M\setminus \partial M$, all of the sectional curvatures of $M$ at $p$ are greater than or equal to $\kappa$, and
  \item for every $p_0\in \partial M$ and $u\in T_{p_0}(\partial M)$ we have 
      $$\mbox{II}_{p_0}^{\partial M}(u,u) \ge 0,$$
      where $\mbox{II}_{p_0}^{\partial M}$ is the scalar second fundamental form of the boundary $\partial M$ (viewed as a hypersurface in $M$) at $p_0$ associated to the unit normal field to $\partial M$ pointing into $M$.  
\end{itemize}

\end{definition}

\begin{definition}

Let $p_0\in\partial M$ be a boundary point and let $\nu(p_0)\in T_{p_0}M$ be the unique normal vector to $\partial M$ at $p_0$ pointing into $M$. The topological subspace
$$C_{p_0}(M):= \left\{\tilde w + r\ \nu(p_0):\ \tilde{w}\in T_{p_0}(\partial M),\ r\ge0\right\}\subset T_{p_0}M$$
is called the \emph{half tangent space of $M$ at $p_0$}.

\end{definition}

\begin{definition}\label{def:polar_vectors}
Let $p_0\in \partial M$. The unit vectors $u,v\in C_{p_0}(M)$ are called \textit{polar} if 
$$g_{p_0}(u+v,w) \ge 0$$ 
for each $w\in C_{p_0}(M)$. If $p_0$ is not a boundary point, then we say that two unit vectors $u,v\in T_{p_0}M$ are polar whenever they are opposite.
\end{definition}

\begin{remark}

If $p_0\in \partial M$ is a boundary point and $u = \tilde u + r\ \nu(p_0)\in C_{p_0}(M)$ is of unit norm, then there exists a unique unit vector $v = -\tilde u + r\ \nu(p_0)\in C_{p_0}(M)$ such that $u$ and $v$ are polar. In particular, if $u\in T_{p_0}(\partial M)$, then $v=-u\in T_{p_0}(\partial M)$ (see \Cref{fig:polar_vectors}).

\end{remark}

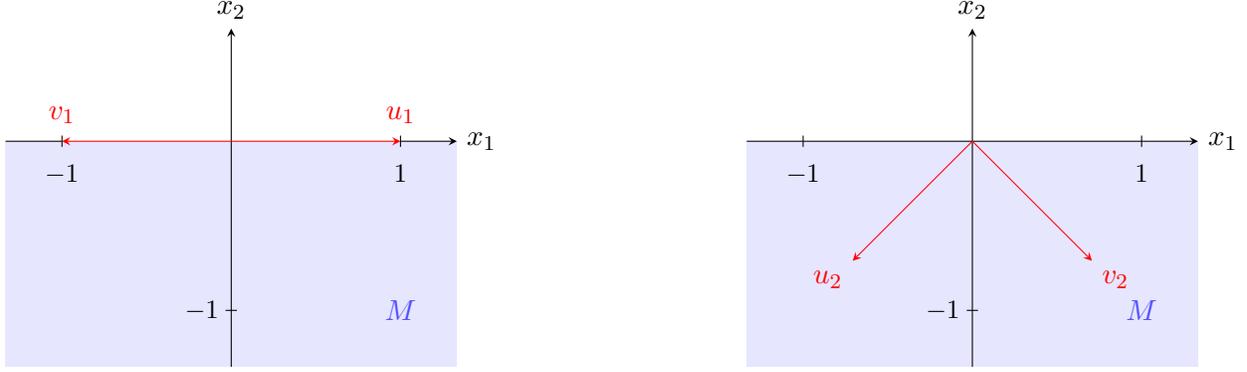
\begin{figure}
  \begin{tikzpicture}[>=stealth, scale=1.5]
  \fill[blue!10] (-2,-2) rectangle (2,0);
  
  \draw[->] (1.5,0) -- (2,0) node[right] {$x_1$};
  \draw (-2,0) -- (-1.5,0);
  \draw[->] (0,-2) -- (0,1) node[above] {$x_2$};

  \draw (1.5,0.05) -- (1.5,-0.05) node[below=3pt] {\small $1$};
  \draw (-1.5,0.05) -- (-1.5,-0.05) node[below=3pt] {\small $-1$};
  \draw (-0.05,-1.5) -- (0.05,-1.5) node[left=3pt] {\small $-1$};
  
  \draw[->, red] (0,0) -- (1.5,0) node[above=3pt] {$u_1$};
  \draw[->, red] (0,0) -- (-1.5,0) node[above=3pt] {$v_1$};
  
  \node[blue!70] at (1.5,-1.5) {$M$};
  \end{tikzpicture}
  \hfill
  \begin{tikzpicture}[>=stealth, scale=1.5]
  \fill[blue!10] (-2,-2) rectangle (2,0);
  
  \draw[->] (-2,0) -- (2,0) node[right] {$x_1$};
  \draw[->] (0,-2) -- (0,1) node[above] {$x_2$};

  \draw (1.5,0.05) -- (1.5,-0.05) node[below=3pt] {\small $1$};
  \draw (-1.5,0.05) -- (-1.5,-0.05) node[below=3pt] {\small $-1$};
  
  \draw (-0.05,-1.5) -- (0.05,-1.5) node[left=3pt] {\small $-1$};

  \draw[red,->, domain=0:1.5*sqrt(2)/2] plot(-\x,-\x) node[below left] {$u_2$};
  \draw[red,->, domain=0:1.5*sqrt(2)/2] plot(\x,-\x) node[below right] {$v_2$};
  
  \node[blue!70] at (1.5,-1.5) {$M$};
  \end{tikzpicture}
  \caption{$M=\{x_2\leq 0\} = C_{(0,0)}M\subset \R^2$. On the left, the pair of polar vectors $u_1 = (-1,0)$, $v_1 = (1,0)$ and, on the right, the pair of polar vectors $u_2 = (-\sqrt{2}/2,-\sqrt{2}/2)$, $v_2 = (\sqrt{2}/2,-\sqrt{2}/2)$.}\label{fig:polar_vectors}
\end{figure}

\begin{definition}\label{def:regular_curve}
Let $c:[a,b]\rightarrow M$ be a Lipschitz continuous function on $[a,b]\subset \R$. We say that $c$ is a \textit{regular curve in $M$} if the left and right derivatives $c_\pm^\prime(t)\in\R^N$ exist and are unique for every $t\in [a,b]$.
\end{definition}

\begin{definition}
Let $c:[a,b]\rightarrow M$ be a Lipschitz regular curve in $M$ and let $t\in [a,b]$. The vectors
$$c^+(t):= c^\prime_+(t),\qquad c^-(t):= -c^\prime_-(t)$$
are called the \textit{left and right tangent vectors to $c$ at $t$} respectively.
\end{definition}

\begin{remark}
If $c: [a,b]\rightarrow M$ is a Lipschitz regular curve in $M$, we have that if $c(t)\in \partial M$, then $c^+(t),c^-(t)\in C_{c(t)}(M)$.
\end{remark}

\subsection{The double of a Riemannian manifold with boundary}

Let $(M,g)$ be a smooth Riemannian manifold with non-empty boundary $\partial M$. The goal of this section is to recall the well-known notion of the double of $M$ along its boundary. Its construction and most of the properties we will recall can be found in \cite{LeeManifolds} and \cite{Course}.

Let $M_+$ and $M_-$ be two copies of $M$. If $x\in M$, we denote by $x_+$ and $x_-$ the corresponding points in $M_+$ and $M_-$. 

\begin{definition}
The quotient space
$$D(M):=(M_+\sqcup M_-)/\sim,$$
where $x_+\sim x_-$ if and only if $x\in \partial M$, is called the \emph{double of $M$ along $\partial M$}.
\end{definition}

\begin{remark}
Every point of $\Int(M)$ gives rise to two distinct points of $D(M)$, one in each copy of $M$, whereas every boundary point gives rise to a single point in $D(M)$.
\end{remark}

The quotient $D(M)$ comes with a folding map $\pi_M:D(M)\to M$, defined by $\pi_M(x_+) = \pi_M(x_-) = x$. Topologically, $D(M)$ is a smooth manifold without boundary. Indeed, around a point which is away from $\partial M$ a chart of $D(M)$ is given by a chart of one of the copies of $M$. 
Instead, at a boundary point $x_0\in \partial M$ we may construct a coordinate chart by gluing two copies of a boundary chart of $M$ as follows. Let $V\subset M$ be a boundary chart of $x_0$ with coordinates $(\theta,r)$, where $\theta$ is a local coordinate for $\partial M$ around $x_0$ and $r\in[0,\varepsilon)$. In $\tilde V = \pi_M^{-1}(V)$, we define the smooth coordinates $(\theta,s)$, where $\theta$ is as before and $s\in (-\varepsilon,\varepsilon)$ is such that 
$$s(q) = \begin{cases}
           r(\pi_M(q)), & \mbox{if } q\in M_+ \\
           -r(\pi_M(q)), & \mbox{if } q\in M_-.
         \end{cases}$$
Although smooth away from $\partial M$, in general the folding map $\pi_M:D(M)\to M$ is continuous but not even $C^1$ at some point $x_0\in \partial M$. Indeed, with respect to the doubled coordinates $(\theta,s)$ introduced before, its local expression is $\pi_M(\theta,s)=(\theta,|s|)$. Still, we are able to define its one-sided differentials at $x_0$ as a folding map
$$d_{x_0}\pi_M:T_{x_0}D(M)\to C_{x_0}M,\quad d_{x_0}\pi_M(\xi+\alpha \partial_r) = \xi + |\alpha|\nu(x_0),$$ 
where $\xi\in T_{x_0}\partial M$, $\alpha\in \R$ and $\nu(x_0)$ is the unit normal to $\partial M$ at $x_0$ pointing into $M$.

\begin{definition}
The quotient $D(M)$ is endowed with a canonical distance function $d_{D(M)}:D(M)\times D(M)\to \R$, defined by
$$d_{D(M)}(x_i,y_j)=\begin{cases}
                      d_g(x,y), & \mbox{if } i=j, \\
                      \inf \left\{(d_g(x,z)+d_g(z,y)):\ z\in \partial M\right\}, & \mbox{otherwise}.
                    \end{cases}$$ 
\end{definition}

The metric $d_{D(M)}$ may be described equivalently in two ways. First, the length functional induced by $d_{D(M)}$ is the one obtained by the pullback tensor $\pi_M^*g$. Such pullback is a positive semidefinite bilinear form at each $q\in D(M)$ which degenerates at points in $\partial M$ along a direction not tangent to $\partial M$. 

On the other hand, in doubled boundary normal coordinates $(\theta,s)\in \partial M\times (-\varepsilon,\varepsilon)$, the function $d_{D(M)}$ is associated to the Riemannian distance function given by the Lipschitz Riemannian metric
$$g_{D(M)}=ds^2+h_{|s|}.$$
where, in the same coordinates, we have $g = ds^2+h_s$, $s\ge 0$. Such a metric $g_{D(M)}$ is truly non-degenerate everywhere and smooth away from $\partial M$, but its coefficients are in general only Lipschitz across $\partial M$.

In what follows, we recall some important properties of the projection $\pi_M:D(M)\to M$.

\begin{proposition}\label{prop:double_projection}
  Let $(M,g)$ be a smooth Riemannian manifold with boundary and let\\ $(D(M),d_{D(M)})$ denote its double along $\partial M$. Then
  \begin{enumerate}
    \item $\pi_M$ is a $1$-Lipschitz map with respect to the distances $d_g$ and $d_{D(M)}$;
    \item if $x_0\in \partial M\subset D(M)$ and $u,v\in T_{x_0}D(M)$ are unit vectors which are polar with respect to $g_{D(M)}$, then $d_{x_0}\pi_M(u)$ and $d_{x_0}\pi_M(v)$ are unit vectors in $C_{\pi_M(q)}(M)$ which are polar with respect to $g$.
    \item if $\tilde c:[a,b]\to D(M)$ is a Lipschitz regular curve which is parameterized by arclength with respect to $d_{D(M)}$, then $\pi_M\circ \tilde c: [a,b]\to M$ is a Lipschitz regular curve which is parameterized by arclength with respect to $d_g$;
  \end{enumerate} 
\end{proposition}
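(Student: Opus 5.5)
We prove the three items in order. Each one reduces to a local computation in doubled boundary normal coordinates $(\theta,s)$ near $\partial M$. Away from $\partial M$ there is nothing to check, because there $\pi_M$ restricted to either copy is an isometry onto its image.

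\emph{Item 1.} We argue directly from the definition of $d_{D(M)}$.
\begin{itemize}
\item If $x,y$ lie in the same copy, then $d_g(\pi_M x,\pi_M y)=d_g(x,y)=d_{D(M)}(x_i,y_i)$.
\item If they lie in different copies, then for every $z\in\partial M$ the triangle inequality gives $d_g(x,y)\le d_g(x,z)+d_g(z,y)$. Taking the infimum over $z$ yields $d_g(x,y)\le d_{D(M)}(x_+,y_-)$.
\end{itemize}
Hence $\pi_M$ is $1$-Lipschitz, and no smoothness is needed for this step.

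\emph{Item 2.} At $x_0\in\partial M$ the metric $g_{D(M)}=ds^2+h_{|s|}$ is continuous and equals $ds^2+h_0$. So $T_{x_0}D(M)$ is a genuine inner product space with orthogonal splitting $T_{x_0}\partial M\oplus\R\partial_s$, and $\partial_s$ is unit. Since $x_0$ is not a boundary point of $D(M)$, the convention in \Cref{def:polar_vectors} says polar means $v=-u$. Write $u=\xi+\alpha\partial_s$; then $v=-\xi-\alpha\partial_s$. By the formula for the one-sided differential,
$$d_{x_0}\pi_M(u)=\xi+|\alpha|\nu,\qquad d_{x_0}\pi_M(v)=-\xi+|\alpha|\nu.$$
Both vectors are unit, because $\nu$ is $g$-unit and orthogonal to $T_{x_0}\partial M$. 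Their sum is $2|\alpha|\nu$. For any $w=\tilde w+r\nu\in C_{x_0}(M)$ we get $g(2|\alpha|\nu,w)=2|\alpha|r\ge0$, which is exactly polarity. This matches the Remark describing the unique polar partner of a vector in $C_{x_0}(M)$.

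\emph{Item 3.} This is the only item with real content.

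\emph{Lipschitz and arclength.} Lipschitz continuity of $\pi_M\circ\tilde c$ follows from item 1. For the arclength property, we use that $\pi_M$ restricted to each closed copy $M_\pm$ is an isometry for the induced length structures. Any curve in $D(M)$ can be cut into pieces lying in a single closed copy, and the length of each piece is preserved. Here we rely on the stated fact that the length functional of $d_{D(M)}$ is that of $\pi_M^*g$, so lengths agree: $L_g(\pi_M\circ\tilde c|_{[t_1,t_2]})=L_{D(M)}(\tilde c|_{[t_1,t_2]})=t_2-t_1$.

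\emph{Regularity.} We must show that one-sided derivatives of $\pi_M\circ\tilde c$ exist at every $t$.
\begin{itemize}
\item If $\tilde c(t)\notin\partial M$, this is clear, since $\pi_M$ is smooth there.
\item If $\tilde c(t)=x_0\in\partial M$, write $\tilde c=(\theta(\tau),s(\tau))$ locally. Then $\pi_M\circ\tilde c=(\theta(\tau),|s(\tau)|)$. Since $s(t)=0$ and $s'_+(t)$ exists, we have $|s(\tau)|=|s'_+(t)|(\tau-t)+o(\tau-t)$ for $\tau\downarrow t$, and similarly from the left.
\end{itemize}
So the one-sided derivatives exist and equal $d_{x_0}\pi_M(\tilde c'_\pm(t))$, with the sign conventions handled as in item 2.

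The main obstacle is the interplay between the Lipschitz (non-smooth) metric and the coordinate description at boundary points. We need to ensure that $|s|$ has one-sided derivatives along the curve, and that unit speed with respect to $d_{D(M)}$ transfers to unit speed with respect to $d_g$ even when $\tilde c$ crosses or touches $\partial M$ infinitely often near $t$. We handle this by working with lengths, which are additive and copy-wise preserved, rather than with pointwise speeds.
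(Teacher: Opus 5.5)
Your proposal is correct and follows essentially the same route as the paper. Items 1 and 2 match the paper's proof: the triangle inequality plus an infimum over $z\in\partial M$ for item 1, and $v=-u$ in the boundaryless double with $d_{x_0}\pi_M(u)+d_{x_0}\pi_M(v)=2|\alpha|\nu$ for item 2. In item 3 you prove what the paper only asserts, namely that $|s|$ has one-sided derivatives along the curve, and you obtain unit speed from the length identity $L_g(\pi_M\circ\tilde c)=L_{D(M)}(\tilde c)$ rather than from the paper's pointwise remark that $d_{x_0}\pi_M$ preserves unit vectors. One sentence is wrong as stated: a curve need not split into finitely many single-copy pieces when it crosses $\partial M$ infinitely often. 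Your argument does not rely on it, though, since the length identity already covers that case.
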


\begin{proof}
We prove the three claims separately. First, let $x_i,y_j\in D(M)$, where $i,j\in\{+,-\}$. If $i=j$, then by definition
$$d_g(\pi_M(x_i),\pi_M(y_i)) =d_g(x,y)= d_{D(M)}(x_i,y_i).$$
If $i\neq j$, then
$$d_g(\pi(x_i), \pi(y_j)) =d_g(x,y)\leq \inf\{ d_g(x,z)+d_g(z,y): z\in \partial M\} = d_{D(M)}(x_i,y_j),$$
due to the triangle inequality. Hence $\pi_M$ is $1$-Lipschitz map.

Next, we prove the second claim. Let $x_0\in \partial M$ and choose boundary normal coordinates $(\theta,s)$ where $\theta$ is a local coordinate on $\partial M$ and $s\in (-\varepsilon,\varepsilon)$ is the normal coordinate to $\partial M$. Recall that, in these coordinates, the metrics $g$ and $g_{D(M)}$ have the forms
$$g=ds^2+h_s,
\quad s\in [0,\varepsilon),\qquad g_{D(M)}=ds^2+h_{|s|},
\quad s\in (-\varepsilon,\varepsilon).$$
Then, observe that if $u = \tilde u + \alpha\partial_s\in T_{x_0}D(M)$ is a unit vector, i.e. $(g_{D(M)})_{x_0}(u,u) = |\tilde u|_{h_0}^2 + |\alpha|^2 = 1$, the vector $d_{x_0}\pi_M(u) = \tilde u + |\alpha|\nu(x_0)$ has unit norm as well. Indeed
$$g_{x_0}(d_{x_0}\pi_M(u),d_{x_0}\pi_M(u)) = |\tilde u|_{h_0}^2 + |\alpha|^2 = 1.$$
Moreover, if $u = \tilde u + \alpha\partial_s$ and $v = \tilde v + \beta\partial_s$ are unit polar vectors in $T_{x_0}D(M)$ we have that, since $D(M)$ has no boundary, $u = -v$. But then, if $w = \tilde w + s\nu(x_0)\in C_{x_0}(M)$, we estimate
$$g_{x_0}\bigl(d_{x_0}\pi_M(u)+d_{x_0}\pi_M(v),w\bigr)= g_{x_0} \bigl(2|\alpha|\nu(x_0),\tilde w+s\nu(x_0)\bigr)=2|\alpha|s \geq 0.$$
Hence $d_{x_0}\pi_M(u)$ and $d_{x_0}\pi_M(v)$ are polar in $C_{x_0}M$.

Finally, we prove the third claim. Let $\tilde c:[a,b]\to D(M)$ be a Lipschitz regular curve parameterized by arclength, and define $c:=\pi_M\circ \tilde c$. Since $\pi_M$ is $1$-Lipschitz, $c$ is Lipschitz as well. It remains to prove that $c$ is regular and has unit speed. Away from $\partial M\subset D(M)$, the map $\pi_M$ is locally a smooth isometry from one of the two copies of $M$ onto $M$, and therefore the conclusion is immediate. Now, let $t_0\in[a,b]$ be such that $\tilde c(t_0)=x_0\in \partial M\subset D(M)$. Since tangent vectors of $\tilde c$ transform under $d_{x_0}\pi_M$ into tangent vectors of $c$, and since $d_{x_0}\pi_M$ sends unit vectors to unit vectors, the claim follows.
\end{proof}

\subsection{Riemannian comparison theory and quasigeodesic curves}

Let $(M,g)$, with $M\subset \R^N$, be a complete orientable smooth $n$-dimensional Riemannian manifold (with or without boundary) and let $d_g: M\times M \rightarrow \R$ denote the corresponding Riemannian distance function.  In what follows, we will assume that $(M,g)$ is a space of curvature bounded below by some $\kappa\in \R$. Also, we will let
$$\alpha_\kappa :=\begin{cases} \dfrac{\pi}{\sqrt{\kappa}} &\text{if}\ \kappa>0,\\  +\infty &\text{if}\ \kappa\leq 0\end{cases}$$
denote the diameter of the unique complete simply connected 2-dimensional surface of constant curvature $\kappa$ (see \cite{LeeRiemannian}).

Let $p\in M$ and $c:[a,b]\rightarrow M$ be an arclength-parameterized geodesic segment such that $0<d_{g}(p,c(t))< \alpha_\kappa$ \footnote{When $\kappa>0$, the condition $0<d_g(p,c(t))<\alpha_\kappa$ ensures that relevant comparison construction is well behaved.} for all $t\in [a,b]$. Then, the Hessian comparison theorem (see \cite[Theorem 11.7]{LeeRiemannian}) implies that the function
$$t\mapsto d_{g}(p,c(t))$$
satisfies the following concavity condition in the form of a differential inequality; see \cite{PetruninSemiconcave}:
\begin{equation}\label{eq:concavity_condition}
    f_p^{\prime\prime}(t)\leq 1- \kappa f_p(t), \quad \text{for all}\ t\in (a,b),
\end{equation}
where $f_p(t) = \rho_\kappa(d_{g}(p,c(t)))$ and 
\begin{equation*}
\rho_\kappa(r) =\begin{cases}
    \dfrac{1-\cos(r\sqrt{\kappa})}{\kappa}, & \mbox{if } \kappa>0, \\
    \dfrac{r^2}{2}, & \mbox{if } \kappa=0, \\
    \dfrac{1-\cosh(r\sqrt{-\kappa})}{\kappa}, & \mbox{if } \kappa<0.
  \end{cases}
\end{equation*}

\begin{remark}

Since the continuous function $f_p:[a,b]\rightarrow \R$ might not be twice differentiable everywhere in $(a,b)$, the differential inequality \cref{eq:concavity_condition} must be understood in the barrier sense\footnote{We refer to \cite{manteg} for further discussions on differential inequalities in the barrier sense and their applications to Riemannian geometry.} and it reads as follows. For every $\Phi\in C^2([a,b])$ that is a solution of $\Phi^{\prime\prime}(t) = 1-\kappa f_p(t)$ for all $t\in [a,b]$, the function
$$t\mapsto f_p(t)-\Phi(t)$$
is concave in $[a,b]$. If $f$ is piecewise $C^2$, then the above inequality holds in the barrier sense if and only if it holds in the classical sense on each smooth subinterval and, at every nonsmooth time $t_0$, one has $f'_-(t_0)\geq f'_+(t_0)$.

\end{remark}

\begin{remark}

The function $\rho_\kappa:\R\rightarrow \R$ plays a fundamental role in comparison theory for Riemannian manifolds. From a geometrical point of view, it controls the area element of the geodesic sphere in a space of constant curvature $\kappa$. In practice, its derivative $\rho_\kappa^{\prime}$ controls how radial geodesics spread from a point in a space of constant curvature $\kappa$.

\end{remark}

Moreover, for a Riemannian manifold without boundary, condition \cref{eq:concavity_condition} characterizes geodesics among all Lipschitz curves in the sense of the following proposition (see \cite{PetruninSemiconcave}).

\begin{proposition}

Assume that $\partial M =\varnothing$ and let $c:[a,b]\rightarrow M$ be a Lipschitz continuous curve in $M$ that is parameterized by arclength. Then $c$ is a geodesic segment of $M$ if and only if, for every $p\in M$ such that $0<d_{g}(p,c(t))<\alpha_\kappa$ for all $t\in [a,b]$, the continuous function 
$$t\mapsto \rho_\kappa(d_{g}(p,c(t)))$$ 
satisfies \cref{eq:concavity_condition}.

\end{proposition}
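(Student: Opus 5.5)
The plan is to prove the two implications separately, working locally so that everything reduces to Riemannian comparison inside normal neighborhoods.

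\emph{Geodesic $\Rightarrow$ concavity.} Let $c$ be a unit-speed geodesic and $p$ a point with $0<d_g(p,c(t))<\alpha_\kappa$ on $[a,b]$. Near each time $t_0$ where $c(t_0)$ is not in the cut locus of $p$, the distance $r_p=d_g(p,\cdot)$ is smooth. There the Hessian comparison theorem (\cite[Theorem 11.7]{LeeRiemannian}) gives $\nabla^2(\rho_\kappa\circ r_p)\le (1-\kappa\,\rho_\kappa\circ r_p)\,g$. Restricting to $c$, and using that $c$ is a geodesic so that $(f\circ c)''=\nabla^2 f(\dot c,\dot c)$ with $|\dot c|=1$, yields \cref{eq:concavity_condition} classically.

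At cut points we use support functions instead. Take a minimizing geodesic $\sigma$ from $p$ to $c(t_0)$, and replace $r_p$ near $c(t_0)$ by $\tilde r(x)=\varepsilon+d_g(\sigma(\varepsilon),x)$. This function is smooth near $c(t_0)$, satisfies $\tilde r\ge r_p$, and agrees with $r_p$ at $c(t_0)$. Since $\rho_\kappa$ is increasing on $(0,\alpha_\kappa)$, the function $\rho_\kappa\circ\tilde r\circ c$ is an upper support function for $f_p$ at $t_0$. Comparison applied to it gives the barrier inequality up to an error that vanishes as $\varepsilon\to0$, which is exactly the barrier-sense statement of the Remark.

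\emph{Concavity $\Rightarrow$ geodesic.} Fix $t_0\in(a,b)$ and work inside a small convex normal ball $B$ around $c(t_0)$. Pick $t_1<t_0<t_2$ close to $t_0$ and let $\gamma$ be the minimizing geodesic in $B$ from $c(t_1)$ to $c(t_2)$. We want to show $d_g(c(t_1),c(t_2))=t_2-t_1$, which forces $c$ to coincide with $\gamma$ because $c$ has unit speed.

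Choose $p=c(t_1)$ itself (or, to keep $d_g>0$, a point slightly before $c(t_1)$ along the extension of $\gamma$). Then $f_p(t_1)$ is close to $\rho_\kappa(0)=0$, with initial slope at most $1$ since $c$ is $1$-Lipschitz. The barrier inequality compares $f_p$ with the model solution $\Phi$ of $\Phi''=1-\kappa\Phi$ having matching initial data, and $\Phi$ equals $\rho_\kappa$ of the distance traveled along a unit-speed geodesic. This produces an upper bound on $d_g(p,c(t))$. To get the matching lower bound, and hence $d_g(p,c(t))=t-t_1$ as $p\to c(t_1)$, I would use Lipschitz/first-variation arguments with $p$ placed at both ends.

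I expect this extraction to be the main obstacle, because concavity bounds distance only from above. The plan for it is to apply the condition with $p$ chosen far along $\gamma$ beyond $c(t_2)$ and also before $c(t_1)$, so that the triangle inequality and the concavity of $t\mapsto d(p,c(t))$ relative to the model force equality of $t\mapsto d(p,c(t))$ with the model function.

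In the smooth setting one can shortcut this: first show that $c$ is $C^1$, by concavity with $p$ near $c(t_0)$, which gives one-sided derivatives and forces $c^-=-c^+$. Then show $c$ is $C^2$ with vanishing acceleration by testing with $p$ in directions normal to $\dot c$. A nonzero geodesic curvature $\nabla_{\dot c}\dot c$ would make $\frac{d^2}{dt^2}\rho_\kappa(d(p,c))$ exceed the bound, because the normal component of the acceleration enters with sign $-\langle\nabla_{\dot c}\dot c,\nabla r_p\rangle$. Choosing $p$ on the side where this term is positive, and letting $d(p,c(t_0))\to0$, violates \cref{eq:concavity_condition}, so $\nabla_{\dot c}\dot c=0$. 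Finally, since the statement is local, the argument in the ball $B$ suffices.
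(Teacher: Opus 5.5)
Your forward direction (Hessian comparison where $d_g(p,\cdot)$ is smooth, and Calabi-type upper support functions $\varepsilon+d_g(\sigma(\varepsilon),\cdot)$ at cut points) is the standard argument and is fine. The paper gives no proof of this proposition and defers to \cite{PetruninSemiconcave}.

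The converse has a genuine gap. You never obtain the lower bound $d_g(c(t_1),c(t_2))\ge t_2-t_1$, and neither route you sketch can supply it.

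Test points on the line of the chord $\gamma$ cannot detect non-geodesic behaviour. Take the broken line $c_2$ of \Cref{ex:billiard_example}, regarded as a curve in $\R^2$ (so $\partial M=\varnothing$, $\kappa=0$), with $t_1=-t_2$. Every point you propose ($c_2(t_1)$, or points before $c_2(t_1)$ or beyond $c_2(t_2)$ on the extension of $\gamma$) lies on the horizontal line through $c_2(\pm t_2)$, strictly below the corner $c_2(0)$. For such $p$ the function $f_p=\tfrac12|p-c_2|^2$ has $f_p''=1$ on both segments. It also has a concave kink at $0$, since $(f_p)'_-(0)-(f_p)'_+(0)=\langle c_2(0)-p,\,(c_2)'_-(0)-(c_2)'_+(0)\rangle=t_2>0$. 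So \cref{eq:concavity_condition} holds for all of them, although $d(c_2(t_1),c_2(t_2))=\sqrt2\,t_2<t_2-t_1$. The points that see the corner sit near $c_2(0)$ on its convex side. The test point therefore has to be chosen from the tangent data of $c$, not from the chord.

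Your smooth shortcut presupposes $c\in C^2$, so that $f_p''$ can be expanded with a term $\langle\nabla(\rho_\kappa\circ d_p),\nabla_{\dot c}\dot c\rangle$. The hypothesis is only a one-sided bound on $f_p''$, which directly yields no more than a $C^1$ curve whose acceleration is a measure. Proving $c\in C^2$ is essentially the whole content of the statement.

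What closes the gap is the ODE comparison you already invoke, with $p$ placed along a one-sided tangent instead of the chord.

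First, get the tangent. For $p$ near $c(t_0)$ the function $F_p=\rho_\kappa\circ d_g(p,\cdot)$ is smooth near $c(t_0)$. Take an orthonormal basis $e_i$ of $T_{c(t_0)}M$, $p_0$ very close to $c(t_0)$, and $p_i$ close to $\exp_{c(t_0)}(s e_i)$. The differences $F_{p_0}-F_{p_i}$ then form a chart there. In this chart each coordinate of $c$ is a difference of semiconcave functions. Hence $c'_\pm$ exist everywhere, $c'_+$ is right-continuous, and $|c'_+(t_0)|=1$ because $|\dot c|=1$ almost everywhere.

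Second, compare with the model. Let $v=c'_+(t_0)$ and $p=\exp_{c(t_0)}(rv)$ with $r$ small. Then $f_p(t_0)=\rho_\kappa(r)$ and $(f_p)'_+(t_0)=-\rho_\kappa'(r)$. These are exactly the initial data of the solution $\Phi(t)=\rho_\kappa(r-(t-t_0))$ of $\Phi''=1-\kappa\Phi$. Since $d_g(c(t_0),p)=r$ and $c$ is $1$-Lipschitz, $c$ cannot reach $p$ before time $t_0+r$. Sturm comparison (Wronskian against $w(t)=\rho_\kappa'(t-t_0)$, approximating $p$ by points off the curve if needed) gives $f_p\le\Phi$ on $[t_0,t_0+r]$. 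Hence $c(t_0+r)=\exp_{c(t_0)}(r\,c'_+(t_0))$.

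The admissible $r$ range over an interval of uniform length along the compact image. So $c$ agrees with a geodesic on a right neighbourhood of every $t_0$, and is therefore a geodesic. This is precisely where $\partial M=\varnothing$ is used: for $c_1$ in \Cref{ex:quasigeodesic_example}, the point $\exp_{c_1(t_0)}(r\,c_1'(t_0))$ lies outside the disk.
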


However, if $M$ has a non-empty boundary, there are Lipschitz curves in $M$ satisfying condition \cref{eq:concavity_condition} that are not geodesics. Here are two simple examples.

\begin{figure}
  \centering
  \includegraphics[width=6cm]{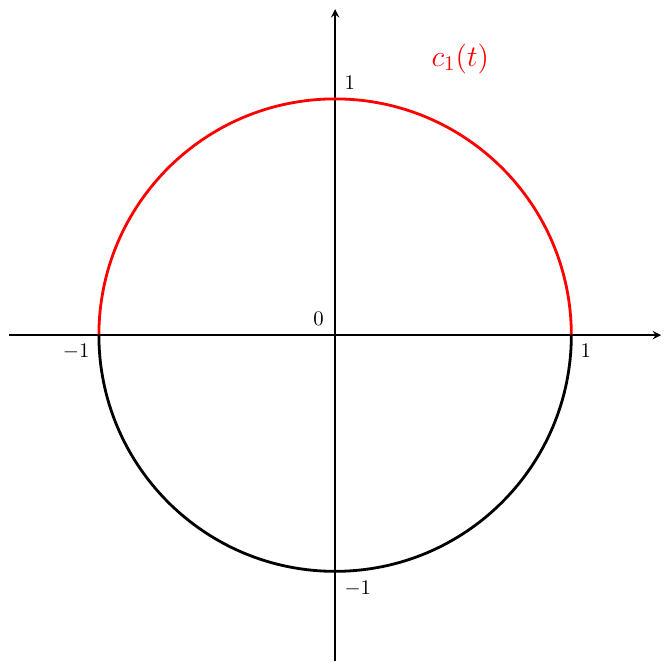}
  \includegraphics[width=6cm]{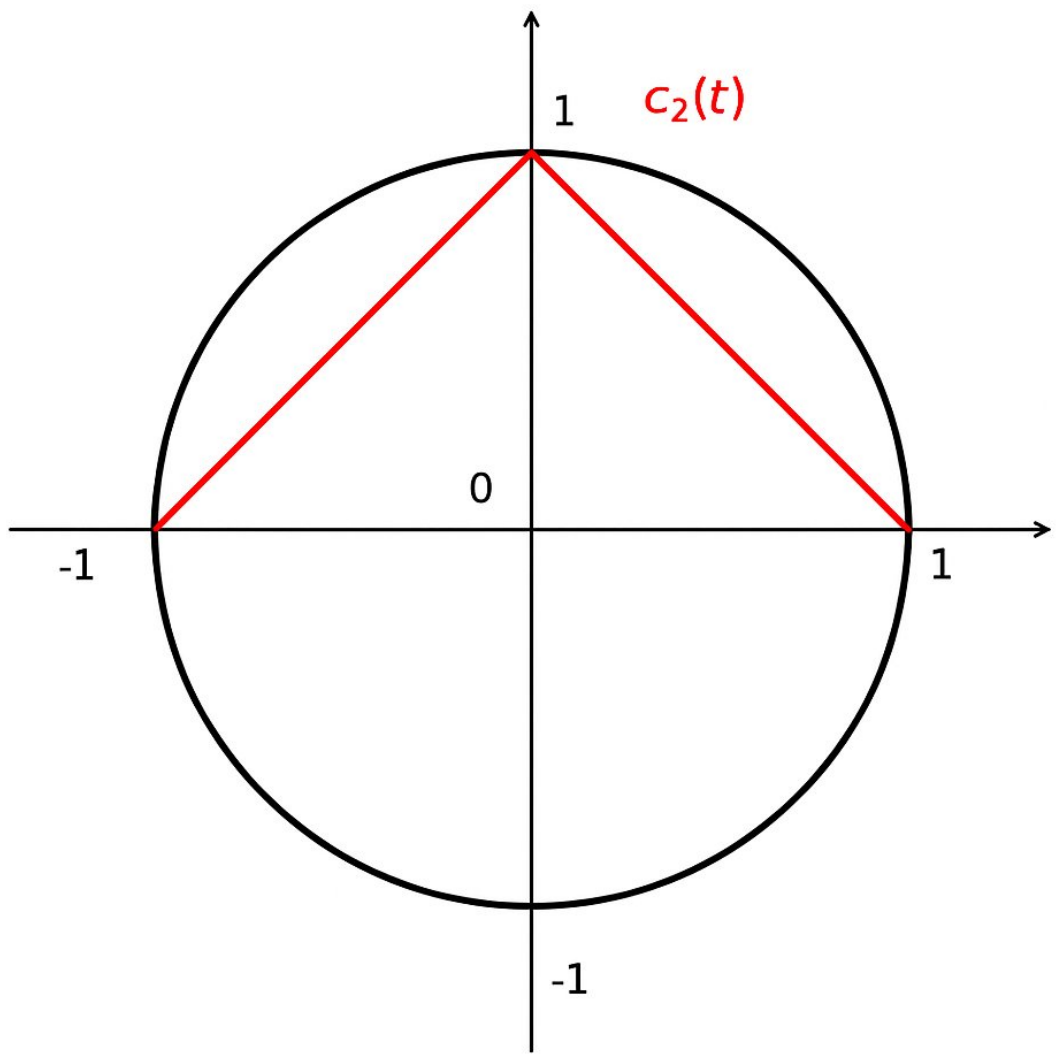}
  \caption{On the left, the smooth arc $c_1(t)$ of \Cref{ex:quasigeodesic_example} and, on the right, the piecewise straight curve $c_2(t)$ of \Cref{ex:billiard_example}.}\label{fig:quasigeodesic_examples}
\end{figure}

\begin{example}\label{ex:quasigeodesic_example}
Let $M = \overline{D}\subset \R^2$ be the closed unit disk in $\R^2$ endowed with the induced Euclidean metric, which is a non-negatively curved space. Consider the arclength-parameterized smooth curve
$$c_1(t) = (\cos(t),\sin(t)),\quad t\in[0,\pi],$$
that is depicted in \Cref{fig:quasigeodesic_examples}. A direct computation shows that for every $q = (x_0,y_0)\in \overline{D}\setminus c([0,\pi])$ the function
$$f_q(t) := \frac{|q-c_1(t)|^2}{2}$$ 
is smooth and satisfies $f_q^{\prime\prime}(t) = x_0\cos(t) + y_0\sin(t) \leq 1$ for each $t\in (0,\pi)$. Hence $c_1$ satisfies condition \cref{eq:concavity_condition} but it is not a geodesic since it is not locally length-minimizing for any time.
\end{example}

\begin{example}\label{ex:billiard_example}
Let $M = \overline{D}\subset \R^2$ be the closed unit disk in $\R^2$ endowed with the induced Euclidean metric as in \Cref{ex:quasigeodesic_example} and consider the arclength-parameterized piecewise smooth curve
$$c_2(t) = \begin{cases}
           \left(\dfrac{t}{\sqrt{2}},1+\dfrac{t}{\sqrt{2}}\right), & \mbox{if } t\in [-\sqrt{2},0],\\
           \left(\dfrac{t}{\sqrt{2}},1-\dfrac{t}{\sqrt{2}}\right), & \mbox{if } t\in [0,\sqrt{2}],
         \end{cases}$$
that is depicted in \Cref{fig:quasigeodesic_examples}. Then, a direct computation shows that $c_2$ satisfies condition \cref{eq:concavity_condition} as well. However, it is not a geodesic since it is not locally length minimizing at $t=0$.
\end{example}

\begin{definition}

Let $c:[a,b]\rightarrow M$ be a Lipschitz regular curve in $M$ which is parameterized by arclength. Then $c$ is called a \textit{$\kappa$-quasigeodesic curve of $M$} if for every $p\in M$ such that $0<d_g(p,c(t))<\alpha_\kappa$ for all $t\in [a,b]$, the continuous function
$$t\mapsto \rho_\kappa(d_g(p,c(t)))$$
satisfies \cref{eq:concavity_condition}.

\end{definition}

The following propositions (from \cite{PetruninSemiconcave}) collect some properties of $\kappa$-quasigeodesic curves in $M$ that will be used below.

\begin{proposition}\label{prop:geodesic_quasigeodesic}

Every $\kappa$-quasigeodesic $c:[a,b]\rightarrow M$ is a Lipschitz regular curve in $M$, i.e. there exist unique left and right derivatives at every $t\in [a,b]$. Moreover, if $M$ has empty boundary, a Lipschitz curve $c:[a,b]\rightarrow M$ is a $\kappa$-quasigeodesic if and only if it is a geodesic segment of $M$ which is parameterized by arclength.

\end{proposition}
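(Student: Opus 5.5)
This is Proposition~\ref{prop:geodesic_quasigeodesic}, which collects two facts, and I would prove them separately.

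\emph{First claim: every $\kappa$-quasigeodesic is Lipschitz regular.} The plan is to read off one-sided derivatives from the concavity condition \cref{eq:concavity_condition}.

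1. Fix $t_0\in[a,b]$. Take points $p$ near $c(t_0)$, with $0<d_g(p,c(t))<\alpha_\kappa$ on a small interval around $t_0$.
2. For each such $p$, the function $f_p-\Phi$ is concave, where $\Phi$ is a $C^2$ barrier. Concave functions have one-sided derivatives everywhere, so $t\mapsto d_g(p,c(t))$ has one-sided derivatives at $t_0$ for every admissible $p$.
3. Work in a normal chart (or, at a boundary point, a boundary chart) around $c(t_0)$. Since $c$ is $1$-Lipschitz, the difference quotients $\frac{c(t_0+h)-c(t_0)}{h}$ are bounded, so they have limit points as $h\to0^+$.
4. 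Any limit point $w$ satisfies, for every admissible $p$,
$$\frac{d}{dt}\Big|_{t_0^+} d_g(p,c(t)) = -g\big(w,\ \text{unit initial direction of a minimal geodesic from } c(t_0) \text{ to } p\big).$$
This uses the first variation formula for distance, valid for $p$ close to $c(t_0)$ in a region where $d_g(p,\cdot)$ is smooth near $c(t_0)$.
5. By step 2 the left-hand side exists, so it is the same for every limit point. The initial directions of minimal geodesics toward nearby $p$ span the tangent space; at a boundary point they span the half space $C_{c(t_0)}(M)$, which still spans $T_{c(t_0)}M$. Hence $w$ is unique and $c'_+(t_0)$ exists. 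The argument for $c'_-(t_0)$ is symmetric.

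\emph{Second claim: when $\partial M=\varnothing$, quasigeodesics are exactly arclength geodesics.}

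1. If $c$ is an arclength geodesic, then \cref{eq:concavity_condition} is precisely the Hessian comparison statement recalled above. So $c$ is a $\kappa$-quasigeodesic.
2. Conversely, let $c$ be a $\kappa$-quasigeodesic. It suffices to show that $c$ is locally a geodesic. Fix $t_0$ and let $q=c(t_0)$.
3. First show $c^+(t_0)=-c^-(t_0)$. Take $p=\exp_q(-\varepsilon c^+(t_0))$ for small $\varepsilon$. Then $d_g(p,c(t))$ has right derivative $1$ at $t_0$. The barrier condition forces the jump inequality $f'_-(t_0)\ge f'_+(t_0)$, so the left derivative is $\ge 1$. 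Since $c$ is $1$-Lipschitz, the left derivative equals $1$, and this forces $c^-(t_0)=-c^+(t_0)$.
4. Next compare $c$ with the geodesic $\sigma(t)=\exp_q((t-t_0)c^+(t_0))$. For $p$ on the extension of $\sigma$ backwards, $f_p$ satisfies $f_p''\le 1-\kappa f_p$, with equal value and equal derivative to the corresponding comparison function at $t_0$. Sturm comparison then gives $d_g(p,c(t))\le d_g(p,c(t_0))+(t-t_0)$ with the correct second-order control. Combined with the $1$-Lipschitz bound, this forces $d_g(p,c(t))=d_g(p,c(t_0))+(t-t_0)$ for small $t-t_0>0$.
5. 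That equality says $c$ runs along the unit-speed gradient line of $d_g(p,\cdot)$, which is the radial geodesic from $p$. So $c$ coincides with $\sigma$ near $t_0$.

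The main obstacle is step 4. Equality in the triangle-type estimate must genuinely pin $c$ to the radial geodesic, and the concavity inequality only gives this up to the correct order. I would handle it by choosing two points $p_\pm$ on opposite sides along $\sigma$ and combining the upper bounds from concavity with the reverse bound $d(p_-,c(t))+d(c(t),p_+)\ge d(p_-,p_+)$. Equality in the triangle inequality, together with uniqueness of minimizers at short range, then places $c(t)$ on $\sigma$.
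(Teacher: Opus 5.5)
The paper does not prove this proposition. It quotes it from \cite{PetruninSemiconcave}, and its first sentence is already part of the paper's definition of a $\kappa$-quasigeodesic, which is required to be a Lipschitz regular curve. So your argument for the first claim is not needed. It is fine in the interior; at a boundary point it additionally needs that $\mbox{II}\ge 0$ makes $d_g(p,\cdot)$ locally the restriction of a smooth distance function on an extension of $M$.

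The real problems are in the converse half of the second claim. Step 4 as written fails because of the \emph{direction} of the inequality, not its order. With $p$ behind $q$ on $\sigma$, concavity plus Sturm comparison gives only the upper bound $d_g(p,c(t))\le d_g(p,q)+(t-t_0)$. The $1$-Lipschitz property already gives exactly this bound, so no equality is forced and step 5 has nothing to act on. Your fallback is the right mechanism, and the point ahead suffices on its own. Take $p_+=\sigma(t_0+\varepsilon)$ with $\varepsilon$ below the injectivity radius and below $\alpha_\kappa/2$. The right derivative of $d_g(p_+,c(\cdot))$ at $t_0$ is $-1$, so comparison gives $d_g(c(t),p_+)\le \varepsilon-(t-t_0)$. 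Together with $d_g(q,c(t))\le t-t_0$, this forces equality in the triangle inequality for $q,c(t),p_+$, hence $c(t)=\sigma(t)$ for $t\in[t_0,t_0+\varepsilon)$.

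The genuine missing piece is that steps 3, 4 and this remedy all assume $c^+(t_0)$ is a \emph{unit} vector. You use this to place $p$ and $p_\pm$ along it and to get right derivatives $\pm 1$. Arclength parametrization only gives $|c^+(t_0)|\le 1$. For example, the unit-speed planar curve with direction angle $\sin(1/t)$ for $t>0$ has a right derivative at $t=0$ of norm $J_0(1)<1$. Nothing in your argument excludes $|c^+(t_0)|<1$, or even $c^+(t_0)=0$, for a quasigeodesic.

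One clean repair avoids pointwise tangent information altogether. Since the metric derivative of $c$ is $1$ almost everywhere, for a.e.\ $t_1$ the curve is differentiable with $|c'(t_1)|=1$. Run the $p_+$ argument at such $t_1$, with a radius $\delta$ chosen uniformly on a compact neighbourhood of $c([a,b])$. This shows that $c$ agrees with $t\mapsto \exp_{c(t_1)}((t-t_1)c'(t_1))$ on $[t_1,t_1+\delta)\cap[a,b]$. Every $t_0>a$ lies in such an interval with $t_1\in(t_0-\delta,t_0)$. The case $t_0=a$ follows by letting $t_1\downarrow a$ and using continuity. So $c$ is locally, hence globally, a geodesic, and step 3 becomes superfluous.

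Note also that these arguments apply the concavity condition to restrictions of $c$, and to points $p_+$ that end up lying on $c$. That is, they use the local form of the definition from \cite{PetruninSemiconcave}, not the paper's literal requirement $0<d_g(p,c(t))<\alpha_\kappa$ for all $t\in[a,b]$.
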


\begin{proposition}\label{prop:quasigeodesic_polar}

Let $c: [a,b]\rightarrow M$ be a $\kappa$-quasigeodesic of $M$. Then, for every $t_0\in[a,b]$ such that $c(t_0)\in \partial M$, the right and left tangent vectors $c^+(t_0)$, $c^-(t_0)\in C_{c(t_0)}(M)$ are polar. \\
On the other hand, if $c_1:[a,b]\rightarrow M$ and $c_2:[b,d]\rightarrow M$ are $\kappa$-quasigeodesics such that $c_1(b)=c_2(b)\in \partial M$ and $c_1^-(b)$, $c_2^+(b)$ are polar, then the continuous curve $c:[a,d]\rightarrow M$ obtained by gluing $c_1$ and $c_2$ together is also a $\kappa$-quasigeodesic.

\end{proposition}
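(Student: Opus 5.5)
Two claims need proof in \Cref{prop:quasigeodesic_polar}: a quasigeodesic has polar one-sided tangents at boundary points, and gluing two quasigeodesics with polar tangents gives a quasigeodesic. The plan is to use the barrier characterization of \cref{eq:concavity_condition}: away from its kinks the inequality holds classically, and at a kink $t_0$ it holds exactly when $f'_-(t_0)\ge f'_+(t_0)$. For the one-sided derivatives I will use the first variation formula for distance. For $p$ close to $x_0:=c(t_0)$ (but $p\neq x_0$) and $c$ regular at $t_0$, it gives
\[
\frac{d}{dt}\Big|_{t_0^+} d_g(p,c(t)) = -\,g_{x_0}(\uparrow_{x_0}^p, c^+(t_0)),\qquad \frac{d}{dt}\Big|_{t_0^-} d_g(p,c(t)) = g_{x_0}(\uparrow_{x_0}^p, c^-(t_0)),
\]
where $\uparrow_{x_0}^p\in C_{x_0}(M)$ is the initial unit direction of a minimizing segment from $x_0$ to $p$. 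Since $\rho_\kappa$ is increasing near $r>0$, the sign of each one-sided derivative of $f_p$ agrees with the sign of these quantities.

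For the first claim, the barrier inequality at $t_0$ gives $f'_-(t_0)\ge f'_+(t_0)$ for every admissible $p$. Dividing by $\rho_\kappa'(d)>0$, this becomes $g(\uparrow^p, c^+(t_0)+c^-(t_0))\ge 0$. Because $\partial M$ is $C^{2}$, the directions $\uparrow_{x_0}^p$ with $p$ near $x_0$ in $M$ are dense in the unit vectors of $C_{x_0}(M)$. One can see this by taking $p=\exp_{x_0}(\epsilon w)$ for $w$ pointing strictly inward; for small $\epsilon$ the segment is minimizing and interior. By continuity the inequality then extends to all $w\in C_{x_0}(M)$, which is exactly \Cref{def:polar_vectors}. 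A technical point is that $p$ must satisfy $d(p,c(t))>0$ along the whole interval. To handle this I will restrict $c$ to a short subinterval around $t_0$, using that the quasigeodesic condition is local: restriction preserves it, by the barrier formulation.

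For the gluing claim, on $[a,b]$ and $[b,d]$ separately $f_p$ satisfies \cref{eq:concavity_condition} in the barrier sense. Concavity of $f_p-\Phi$ on each piece, together with continuity at $b$, gives concavity on $[a,d]$ as soon as the one-sided derivatives at $b$ satisfy $f'_-(b)\ge f'_+(b)$. By the formulas above this is $\rho'_\kappa(d)\,g(\uparrow^p_{x_0}, c_1^-(b)+c_2^+(b))\ge 0$, and this holds by polarity since $\uparrow^p\in C_{x_0}(M)$.

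There is a subtlety when $p$ lies in a cut-type position, where several minimizing directions exist. There the first variation formula gives the derivative as a max or min over $\uparrow^p$. I will check that the inequality survives: $f'_+=-\rho'\max_\uparrow g(\uparrow,c^+)$ and $f'_-=\rho'\min_\uparrow g(\uparrow,c^-)$. Then $f'_- - f'_+ \ge \rho'\min_\uparrow g(\uparrow,c^++c^-)\ge 0$. The remaining cases $d(p,x_0)$ near $\alpha_\kappa$ are identical, since only the sign of $\rho'_\kappa$ matters.

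The main obstacle I expect is justifying the first variation formula in a manifold with boundary, where minimizing segments may bend along $\partial M$. The tool I plan to use is \Cref{prop:double_projection}. Working in the double $D(M)$, which has Lipschitz metric and no boundary, the standard first variation holds for the Lipschitz metric distance. I would then transfer it back through the folding map: $\pi_M$ is $1$-Lipschitz and maps polar pairs to polar pairs. Failing that, I would cite the general Alexandrov-space first variation formula from \cite{PetruninSemiconcave}, since $M$ is locally CBB$(\kappa)$ by \Cref{def:space_curvature_below}.
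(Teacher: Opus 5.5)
The paper gives no argument of its own here; the proposition is quoted from \cite{PetruninSemiconcave}. Your proof is the standard mechanism behind that citation, made explicit in the smooth setting:
\begin{itemize}
\item at a kink, the barrier inequality is equivalent to $f'_-(t_0)\ge f'_+(t_0)$;
\item both one-sided derivatives come from the first variation formula at a boundary point;
\item for the first claim you only need test points $p=\exp_{x_0}(\epsilon w)$ with $w$ strictly inward. These have a unique minimizer, and their directions are dense among the unit vectors of $C_{x_0}(M)$.
\end{itemize}
Your route makes explicit that these are the only inputs. The citation gives more generality (all $\CBB(\kappa)$ spaces). It also supplies regularity facts you use tacitly, notably that $c^{\pm}(t_0)$ exist and are unit vectors, which the definition of polar vectors presupposes.

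Three details need repair, none fatal.

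\emph{The left derivative.} It is $f'_-(t_0)=\rho_\kappa'\max_{\uparrow}g(\uparrow,c^-(t_0))$, not a minimum: reverse the curve and apply the right-derivative formula. The gluing estimate then follows even more directly, since $\max_\uparrow g(\uparrow,c^-)+\max_\uparrow g(\uparrow,c^+)\ge g(\uparrow_0,c^-+c^+)\ge 0$ for any single minimizing direction $\uparrow_0$.

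\emph{Locality.} In the paper's definition, the comparison is imposed only for $p$ at positive distance from the whole curve. Restricting $c$ to a subinterval therefore enlarges the set of test points, so the locality you invoke is not a formal consequence of the barrier formulation. You can avoid it. For $\dim M\ge 2$ the Lipschitz image $c([a,b])$ is nowhere dense, and the minimizing direction depends continuously on $p$ near $\exp_{x_0}(\epsilon w)$. Hence directions to points off the curve are still dense. For $\dim M=1$ the cone $C_{x_0}(M)$ contains a single unit vector, so polarity is automatic.

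\emph{First variation.} There is no general first variation formula for a merely Lipschitz Riemannian metric, so the double helps only through its $\CBB(\kappa)$ structure. Your fallback is the tool actually needed: the Alexandrov first variation formula, or directly the local convexity of $M$ coming from $\mbox{II}\ge 0$, so that minimizers are geodesics of an extension. This matters especially in the gluing step, where $p$ is arbitrary and may have several minimizers touching $\partial M$.
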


As discussed in \cite{PetruninSemiconcave}, all of the comparison theory for spaces of curvature bounded below by $\kappa\in \R$ as treated until now admits a natural extension to metric spaces satisfying suitable comparison conditions. These spaces are called Alexandrov spaces with curvature bounded below by $\kappa$, or $\CBB(\kappa)$-spaces\footnote{The class of $\CBB(\kappa)$-spaces contains all spaces of curvature bounded below by $\kappa$ defined as in \Cref{def:space_curvature_below}.}. We refer to \cite{Course} and the references therein for a detailed treatment.

An important class of non-trivial examples is given by doubles of smooth Riemannian manifolds with boundary along their boundary. More precisely, if $(M,g)$ is a space of curvature bounded below by $\kappa\in \R$, then its double $(D(M),d_{D(M)})$ is a $\CBB(\kappa)$ space. Therefore, the results contained in this subsection apply both to $M$ and to $D(M)$, even though the natural Riemannian metric on $D(M)$ is generally only Lipschitz across the boundary.

\subsection{Convergence of distance functions and quasigeodesic curves}

Let $(M,d_M)$ be a metric space. Also, let $\{d_k\}_{k\in \N}$ be a sequence of distance functions on $M$ (not necessarily equivalent to $d_M$). For the rest of the subsection, we will assume that $d_k\to d_M$ as $k\to \infty$ uniformly on compact subsets of $M\times M$.

We start by reporting the following important result, which is a \say{fixed-underlying set} version of a much more general statement about stability of minimizing geodesics under so-called pointed Gromov-Hausdorff convergence of metric spaces (we refer to \cite[Section 7.5]{Course} for all the details).

\begin{proposition}\label{prop:geodesic_convergence}
     Assume that $d_k$ converges to $d_M$ as $k\to \infty$ uniformly on compact subsets of $M\times M$. Also, for each $k\in \N$, let $\gamma_k:[a,b]\to M$ be a $d_k$-minimizing arclength-parameterized geodesic segment of $M$ and assume that the sequence $\{\gamma_k\}_{k\in \N}$ converges to some continuous curve $\gamma:[a,b]\to M$ as $k\to\infty$ uniformly in $[a,b]\subset \R$ with respect to $d_M$. Then the curve $\gamma$ is a $d_M$-minimizing geodesic segment of $M$ parameterized by arclength.
\end{proposition}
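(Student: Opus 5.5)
The plan is to run the classical length-space argument directly; the statement is a fixed-underlying-set version of geodesic stability under Gromov--Hausdorff convergence. Write $L_d(c)$ for the length of a curve $c$ measured with respect to a distance $d$, that is, the supremum over partitions $a=t_0<\dots<t_m=b$ of $\sum_i d(c(t_{i-1}),c(t_i))$. Being $d_k$-minimizing and arclength-parameterized means
$$d_k(\gamma_k(s),\gamma_k(t)) = |t-s|\qquad\text{for all } s,t\in[a,b].$$
I would first pass this identity to the limit. The set $C:=\gamma([a,b])\cup\bigcup_k\gamma_k([a,b])$ is compact in $(M,d_M)$, because $\gamma_k\to\gamma$ uniformly and $\gamma$ is continuous. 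Hence $d_k\to d_M$ uniformly on $C\times C$.

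For fixed $s,t$ we then estimate
$$|d_M(\gamma(s),\gamma(t)) - |t-s|| \le |d_M(\gamma(s),\gamma(t)) - d_M(\gamma_k(s),\gamma_k(t))| + |d_M(\gamma_k(s),\gamma_k(t)) - d_k(\gamma_k(s),\gamma_k(t))|.$$
The first term is at most $d_M(\gamma(s),\gamma_k(s))+d_M(\gamma(t),\gamma_k(t))\to 0$ by the triangle inequality and uniform convergence. The second term is at most $\sup_{C\times C}|d_M-d_k|\to 0$. Hence $d_M(\gamma(s),\gamma(t))=|t-s|$ for all $s,t\in[a,b]$.

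This identity says that $\gamma$ is an isometric embedding of $[a,b]$ into $(M,d_M)$, which gives both conclusions at once. First, every partition sum telescopes, so $L_{d_M}(\gamma|_{[s,t]}) = t-s$; thus $\gamma$ is parameterized by arclength. Second, $L_{d_M}(\gamma|_{[s,t]}) = d_M(\gamma(s),\gamma(t))$, so every subarc realizes the distance between its endpoints, which is exactly $d_M$-minimality. In the Riemannian setting where the statement is used, a unit-speed distance-realizing curve is a geodesic segment, so $\gamma$ is a minimizing geodesic.

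The main obstacle is the compactness step: the uniform convergence $d_k\to d_M$ is only assumed on compact sets, so it has to be applied on the specific compact set $C$ above, and one must check that $C$ really is compact. Given a sequence in $C$, either infinitely many terms lie in one $\gamma_k([a,b])$ or in $\gamma([a,b])$, and that set is compact; or the terms are $\gamma_{k_j}(t_j)$ with $k_j\to\infty$. In the latter case, after passing to a subsequence with $t_j\to t$,
$$d_M(\gamma_{k_j}(t_j),\gamma(t)) \le d_M(\gamma_{k_j}(t_j),\gamma(t_j)) + d_M(\gamma(t_j),\gamma(t)),$$
where the first term tends to $0$ by uniform convergence and the second by continuity of $\gamma$. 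So every sequence in $C$ has a subsequence converging in $C$.

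Note that the $d_k$ are not assumed equivalent to $d_M$. The argument therefore uses only the values of the $d_k$ on pairs of points and never any topology induced by $d_k$.
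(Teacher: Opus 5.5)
Your proof is correct and is essentially the paper's argument. Both pass the identity $d_k(\gamma_k(s),\gamma_k(t))=|t-s|$ to the limit using the triangle inequality and the uniform convergence of $d_k$ to $d_M$ on a compact set containing all the curves, and your sequential-compactness check of $C$ fills in a step the paper only asserts. That check tacitly assumes each $\gamma_k$ is $d_M$-continuous, which is harmless here since the $\gamma_k$ are curves in $M$. You could avoid the assumption entirely by working, for fixed $s,t$, with the compact set $\{\gamma(s),\gamma(t)\}\cup\{\gamma_k(s),\gamma_k(t):k\in\N\}$.
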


\begin{proof}
Let $K\subset M$ be a compact set such that $\gamma_k([a,b])\subset K$ and $\gamma([a,b])\subset K$ for all sufficiently large $k$, which exists by uniform convergence of the sequence. Then, for $k\in \N$, we set
$$\varepsilon_k:=\sup_{p,q\in K}|d_k(p,q)-d_M(p,q)|,$$
which converges to $0$ as $k\to\infty$ because the functions $d_k$ converge to $d_M$ uniformly on compact sets.

First, we claim that, for every fixed $s,t\in[a,b]$,
$$\lim_{k\to\infty}d_k(\gamma_k(s),\gamma_k(t))= d_M(\gamma(s),\gamma(t)).$$
Indeed, by triangle inequality, we have
\begin{equation*}
\begin{split}
\left|d_k(\gamma_k(s),\gamma_k(t))-d_M(\gamma(s),\gamma(t))\right| & \le\left|d_k(\gamma_k(s),\gamma_k(t))- d_M(\gamma_k(s),\gamma_k(t))\right| \\
& \quad+\left|d_M(\gamma_k(s),\gamma_k(t)) -d_M(\gamma(s),\gamma(t))\right|\\
& \le\varepsilon_k+d_M(\gamma_k(s),\gamma(s)) + d_M(\gamma_k(t),\gamma(t)).
\end{split}
\end{equation*}
where, in the last step, we used the inequality $|d_M(p,q)-d_M(p',q')|\le d_M(p,p') + d_M(q,q')$, for $p,p',q,q'\in M$, valid in every metric space. Then, we notice that the right-hand side goes to $0$, since $\gamma_k\to\gamma$ uniformly with respect to $d_M$.

Now, since each $\gamma_k$ is a $d_k$-length minimizing curve which is parameterized by arclength, for every $s,t\in[a,b]$, we have
$$ d_k(\gamma_k(s),\gamma_k(t)) =|t-s|$$
Passing to the uniform limit as $k\to\infty$, we get
$$d_M(\gamma(s),\gamma(t))= \lim_{k\to\infty}d_k(\gamma_k(s),\gamma_k(t)) = |t-s|.$$
Therefore $\gamma$ is a $d_M$-minimizing geodesic in $M$ which is parameterized by arclength.
\end{proof}

Unfortunately, a similar result does not hold for general local geodesic curves, as the minimizing radius may tend to $0$ in the uniform limit. However, it is known that if $M$ is a $\CBB(\kappa)$-space with respect to all the $d_k$'s (or a space of curvature bounded below by $\kappa\in \R$ if each $d_k$ is a Riemannian distance functions associated to a smooth Riemannian tensor), a similar result holds for $\kappa$-quasigeodesics in $M$. Indeed, if such a uniform lower bound $\kappa$ exists and $d_k$ converges to $d_M$ as $k\to \infty$ uniformly on compact sets, then $M$ is a $\CBB(\kappa)$-space with respect to $d_M$ as well (see \cite[Proposition 10.7.1]{Course}). Moreover, we have the following result, which is taken from \cite{PetruninSemiconcave}.

\begin{proposition}\label{prop:quasigeodesic_limit}
  Assume that $d_k$ converges to $d_M$ as $k\to \infty$ uniformly on compact subsets of $M\times M$. Also, for each $k\in \N$, let $\gamma_k:[a,b]\to M$ be a $\kappa$-quasigeodesic with respect to $d_k$ and assume that the sequence $\{\gamma_k\}_{k\in \N}$ converges to some continuous curve $\gamma:[a,b]\to M$ as $k\to\infty$ uniformly in $[a,b]\subset \R$ with respect to $d_M$. Then $\gamma$ is a $\kappa$-quasigeodesic with respect to $d_M$.
\end{proposition}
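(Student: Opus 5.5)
To prove \Cref{prop:quasigeodesic_limit}, I will pass to the limit in the barrier form of \cref{eq:concavity_condition}, point by point. I will also need two extra facts: that $\gamma$ is parameterized by arclength, and that it is regular.

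\textbf{Step 1: fix a reference point.} Fix $p\in M$ with $0<d_M(p,\gamma(t))<\alpha_\kappa$ for all $t\in[a,b]$. Choose a compact set $C$ containing $p$, $\gamma([a,b])$ and $\gamma_k([a,b])$ for all large $k$. Since $d_k\to d_M$ uniformly on $C\times C$ and $\gamma_k\to\gamma$ uniformly, the triangle-inequality estimate from the proof of \Cref{prop:geodesic_convergence} gives
$$d_k(p,\gamma_k(t))\to d_M(p,\gamma(t))\quad\text{uniformly in } t.$$
In particular, for $k$ large the strict bounds $0<d_k(p,\gamma_k(t))<\alpha_\kappa$ also hold, by compactness of $[a,b]$. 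So $f_k(t):=\rho_\kappa(d_k(p,\gamma_k(t)))$ satisfies \cref{eq:concavity_condition} in the barrier sense, and $f_k\to f:=\rho_\kappa(d_M(p,\gamma(t)))$ uniformly, because $\rho_\kappa$ is continuous.

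\textbf{Step 2: pass to the limit in the barrier inequality.} I will use the equivalent distributional form: $f_k''\le 1-\kappa f_k$ means that for every nonnegative test function $\varphi\in C_c^\infty((a,b))$,
$$\int f_k\varphi''\le\int(1-\kappa f_k)\varphi .$$
The barrier definition in the paper is equivalent to this for continuous functions, since concavity of $f_k-\Phi_k$ is a distributional sign condition. Both sides converge under uniform convergence, so $f$ satisfies \cref{eq:concavity_condition}.

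\textbf{Step 3: arclength.} Each $\gamma_k$ is $1$-Lipschitz for $d_k$, so $d_k(\gamma_k(s),\gamma_k(t))\le|t-s|$. In the limit this gives $d_M(\gamma(s),\gamma(t))\le |t-s|$, so $\gamma$ is $1$-Lipschitz.

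\textbf{Step 4: unit speed and regularity (the main obstacle).} The reverse inequality, that $\gamma$ has unit speed, does not follow from pointwise limits, since length is only lower semicontinuous. Here I would invoke Petrunin's argument from \cite{PetruninSemiconcave}. Take $p=\gamma(t_0-\delta)$, or points near it. The concavity inequality for $f_k$ with $p_k=\gamma_k(t_0-\delta)$ forces $d_k(p_k,\gamma_k(t))$ to grow at rate close to $1$ on short intervals, by comparison with the model space of curvature $\kappa$. These inequalities are uniform in $k$, so they pass to the limit and give
$$d_M(\gamma(t_0-\delta),\gamma(t))\ge (t-t_0+\delta)(1-o(1)),$$
which combined with Step 3 yields unit speed.

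Regularity then follows from \Cref{prop:geodesic_quasigeodesic}, applied in the limit $\CBB(\kappa)$ space $(M,d_M)$ (which is $\CBB(\kappa)$ by \cite[Proposition 10.7.1]{Course}): a unit-speed curve satisfying all the distance concavity conditions there is a quasigeodesic, and hence has one-sided derivatives.

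I expect Step 4 to be the main difficulty. If the direct approach fails, I would instead rely on Petrunin's equivalent definition of quasigeodesics via developments, which is manifestly stable under uniform convergence.
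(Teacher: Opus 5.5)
Your Steps 1--3 are correct but routine. The concavity inequalities do pass to uniform limits, and your distributional reformulation is valid for continuous functions; the limit curve is also $1$-Lipschitz. The paper gives no argument here and simply cites \cite{PetruninSemiconcave}, so everything rests on your Step 4, and that step does not work.

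The inequality $f''\le 1-\kappa f$ only bounds $d(p,\gamma(t))$ from above. Already in $\R^n$ with $\kappa=0$, every straight line traversed at constant speed $v\le 1$ satisfies it for every admissible $p$, and so does every constant curve. Hence no comparison with the model space built on these inequalities can force growth at rate close to $1$. Unit speed of the limit can only come from the approximating curves. For them, $d_k(\gamma_k(s),\gamma_k(t))\ge (1-o(1))|t-s|$ holds only for $|t-s|$ below a threshold that may shrink as $k\to\infty$. The uniformity in $k$ you assert is exactly the missing ingredient. (Also, $p=\gamma(t_0-\delta)$ is not admissible in the paper's definition, which requires $d(p,\gamma(t))>0$ on all of $[a,b]$.)

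In the stated generality no argument can supply this uniformity, because the spaces can collapse. Let $c_k$ be unit-speed helical geodesics on flat cylinders $S^1_{\varepsilon_k}\times\R$, with $\varepsilon_k\to 0$, making a fixed angle $\theta\in(0,\pi/2)$ with the axis. Transplant them to $M=\R$ with $d_M(x,y)=|x-y|$ as follows. Choose bijections $\psi_k:\R\to S^1_{\varepsilon_k}\times\R$ sending each $[j\varepsilon_k,(j+1)\varepsilon_k)$ onto $S^1_{\varepsilon_k}\times[j\varepsilon_k,(j+1)\varepsilon_k)$, and put $d_k(x,y):=d_{\mathrm{cyl}}(\psi_k(x),\psi_k(y))$. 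Then:
\begin{itemize}
\item each $(M,d_k)$ is isometric to a flat cylinder, and $|d_k-d_M|\le(\pi+2)\varepsilon_k$ on all of $M\times M$;
\item the $0$-quasigeodesics $\psi_k^{-1}\circ c_k$ converge uniformly to $\gamma(t)=t\cos\theta$;
\item $\gamma$ satisfies every concavity inequality but has speed $\cos\theta<1$, so it is not a $0$-quasigeodesic;
\item in particular $d_M(\gamma(t_0-\delta),\gamma(t))=\cos\theta\,(t-t_0+\delta)$, contradicting your estimate.
\end{itemize}

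So the proposition as stated needs a non-collapsing hypothesis, and a proof must genuinely use it. One example is requiring every $d_k$ to induce the same topology as $d_M$, so the dimension cannot drop. This holds in the paper's application, where the $\Phi_k$ are embeddings and all spaces are $n$-dimensional. Your fallback through developments does not help either: developments are built from the arclength parameter, and the same example defeats them.
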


\section{Geodesics approximating billiards}

\subsection{Billiard tables, billiard trajectories and quasigeodesics}

The goal of this section is to introduce the notion of billiard tables and billiard trajectories, and to explore their dynamics and geometrical properties. In what follows, we assume that $g$ is a complete smooth Riemannian metric on $\R^n$. We start by recalling several basic definitions.

\begin{definition}

The Riemannian manifold with boundary $(K,g_K)$ is called a \textit{billiard table} if $K$ is the closure of an open connected subset of $\R^n$ such that $\partial K$ is of class $C^{2,1}$ and $g_K$ is the metric on $K$ induced by $g$.

\end{definition}

\begin{definition}

Let $(K,g_K)$ be a billiard table and $x_0\in \partial K$. Two unit vectors $u,v$ that belong to the half space $C_{x_0} (K)\subset T_{x_0} K$ satisfy the \textit{billiard reflection law} if $(g_K)_{x_0}(u+v, w) = 0$ for all $w\in T_{x_0}(\partial K)$.

\end{definition}

\begin{remark}\label{rmk:reflection_rule}

This definition of the billiard reflection law coincides with the classical one, in which the Riemannian acute angles formed by the unit vectors $u,v\in C_{x_0}(K)$ with $T_{x_0}(\partial K)$ are equal. Also, if $\nu(x_0)$ denotes the unique unit normal vector at $x_0\in \partial K$ pointing into $K$, then two unit vectors $u,v\in C_{x_0}(K)$ satisfy the billiard reflection law if and only if
$$u+v = s\ \nu(x_0),$$
where $s = 2(g_K)_{x_0}(u,\nu(x_0)) = 2(g_K)_{x_0}(v,\nu(x_0))\ge 0$.

\end{remark}

The following lemma shows that, for two unit vectors, satisfying the billiard reflection rule is equivalent to being polar in the sense of \Cref{def:polar_vectors}.

\begin{lemma}

Let $(K,g_K)$ be a billiard table and $x_0\in \partial K$. Then two unit vectors $u,v\in C_{x_0}(K)$ satisfy the billiard reflection law if and only if they are polar.

\end{lemma}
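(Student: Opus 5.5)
We argue directly from the definitions, using the orthogonal decomposition $T_{x_0}K = T_{x_0}(\partial K)\oplus \R\,\nu(x_0)$ with respect to $(g_K)_{x_0}$. Write the unit vectors as $u = \tilde u + a\,\nu(x_0)$ and $v = \tilde v + b\,\nu(x_0)$, where $\tilde u,\tilde v\in T_{x_0}(\partial K)$ and $a,b\ge 0$ because $u,v\in C_{x_0}(K)$. A general element of the half tangent space is $w=\tilde w + r\,\nu(x_0)$ with $\tilde w\in T_{x_0}(\partial K)$ and $r\ge 0$. By orthogonality,
$$(g_K)_{x_0}(u+v,w) = (g_K)_{x_0}(\tilde u+\tilde v,\tilde w) + (a+b)\,r.$$

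\emph{Reflection law implies polar.} Assume $u,v$ satisfy the billiard reflection law. Taking $w\in T_{x_0}(\partial K)$ shows that the tangential part of $u+v$ is orthogonal to all of $T_{x_0}(\partial K)$, so $\tilde u+\tilde v=0$; this is \Cref{rmk:reflection_rule}. For an arbitrary $w\in C_{x_0}(K)$ the pairing then reduces to $(a+b)r$. Both factors are nonnegative, so $(g_K)_{x_0}(u+v,w)\ge 0$, and hence $u,v$ are polar.

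\emph{Polar implies reflection law.} Assume $u,v$ are polar. Every $w\in T_{x_0}(\partial K)$ lies in $C_{x_0}(K)$ (take $r=0$), and so does $-w$. Polarity applied to $w$ and to $-w$ gives $(g_K)_{x_0}(u+v,w)\ge 0$ and $-(g_K)_{x_0}(u+v,w)\ge 0$. Therefore $(g_K)_{x_0}(u+v,w)=0$ for all $w\in T_{x_0}(\partial K)$, which is exactly the billiard reflection law.

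The only point needing care is that $C_{x_0}(K)$ contains the full linear subspace $T_{x_0}(\partial K)$, which is closed under negation. This follows at once from the definition of $C_{x_0}(K)$ (take $r=0$), so neither direction presents a real obstacle.
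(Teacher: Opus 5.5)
Your proof is correct and follows essentially the same route as the paper. In the forward direction you reduce $u+v$ to a nonnegative multiple of $\nu(x_0)$, deriving the coefficient $a+b\ge 0$ directly where the paper quotes its remark on the reflection rule. In the converse you test polarity against $\pm w\in T_{x_0}(\partial K)$, exactly as the paper does.
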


\begin{proof}
  
Let $\nu(x_0)\in T_{x_0}K$ be the unique unit normal vector to $\partial K$ at $x_0$ pointing into $K$. Then the tangent space $T_{x_0} K$ splits into the orthogonal direct sum 
$$T_{x_0} K = T_{x_0}(\partial K)\oplus \mbox{span}_{\R}\{\nu(x_0)\}.$$ 
Also, we recall that every vector $w\in C_{x_0}(K)$ has the form $w = \tilde w + r\ \nu(x_0)$ with $\tilde{w}\in T_{x_0}(\partial K)$ and $r\ge0$.

\begin{itemize}

\item[$(\Rightarrow)$] Assume that two unit vectors $u,v\in C_{x_0}(K)$ satisfy the billiard reflection law, i.e. $u + v = s \ \nu(x_0)$ for some $s\ge 0$ (see \Cref{rmk:reflection_rule}). Then, for every $w = \tilde{w} + r\ \nu(x_0)$, we compute
    $$(g_K)_{x_0}(u+v,w) = (g_K)_{x_0}(s\ \nu(x_0),\tilde{w} + r\ \nu(x_0)) = sr\ (g_K)_{x_0}(\nu(x_0),\nu(x_0)) = sr\ge0.$$
    Hence $u$ and $v$ are polar.
\end{itemize}

\begin{itemize}

\item[$(\Leftarrow)$] Assume that two unit vectors $u,v\in C_{x_0}(K)$ are polar. Let $w\in T_{x_0}(\partial K)$ and recall that $-w\in T_{x_0}(\partial K)$. Then, by definition of polar vectors, we have
    $$(g_K)_{x_0}(u+v, w)\ge 0 \quad \text{and}\quad (g_K)_{x_0}(u+v,-w) \ge 0,$$
    which imply $(g_K)_{x_0}(u+v,w) = 0$. Therefore $u$ and $v$ satisfy the billiard reflection law.
\end{itemize}

\end{proof}

\begin{definition}

Let $K$ be a billiard table and $c:[a,b]\rightarrow K$ be a Lipschitz regular curve. Then $c: [a,b]\rightarrow K$ is called a \textit{billiard trajectory} if it is parameterized by arclength and it is locally length-minimizing at all but a finite set of \emph{bounce times}
$$\mathcal{T}:= \{\tau\in [a,b]: \ c(\tau)\in \partial K\}\subset [a,b]$$ 
such that for every $\tau\in \mathcal{T}$, the right and left tangent vectors $c^+(\tau)$, $c^-(\tau)$ are polar.

\end{definition}

In this paper we restrict our attention to nonsingular billiard trajectories, namely trajectories with only finitely many transverse reflections. For tables with boundary of class $C^{2,1}$, transverse reflections cannot accumulate in finite time, as proved in \cite{Halpern}. Also, we refer to \cite{Kourganoff} for the case in which trajectories can glide along the boundary of the table and their hyperbolic properties.
 
This construction of the billiard dynamics has several important implications, one of them being the uniqueness of the billiard flow, which is proved by the following proposition. For further details on the topic see, for
instance, \cite{tabac} or \cite[Section 3]{khtab} or
\cite{FierobeKaloshinSorrentino}.

\begin{proposition}\label{prop:billiard_uniqueness}
  Let $K$ be a billiard table and let $c_1,c_2:[a,b]\to K$ be billiard trajectories. Assume that there is $(d,e)\subset [a,b]$ such that $c_1(t) = c_2(t)$ for $t\in (d,e)$, then $c_1\equiv c_2$ in $[a,b]$.
\end{proposition}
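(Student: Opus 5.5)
We propose a continuation argument: let $S:=\{t\in[a,b]:\ c_1(s)=c_2(s)\ \text{for all } s\in[a,t]\}$ after first extending agreement from $(d,e)$. By continuity of $c_1,c_2$, agreement on $(d,e)$ extends to $[d,e]$. We then prove that the maximal closed interval $[\alpha,\beta]\supseteq[d,e]$ on which $c_1=c_2$ is all of $[a,b]$. It is enough to treat the forward direction $\beta=b$; the backward direction follows by reversing time, since reversal maps billiard trajectories to billiard trajectories because polarity is symmetric in $u,v$.

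Suppose $\beta<b$ and set $x:=c_1(\beta)=c_2(\beta)$. Because the curves agree on $(\beta-\delta,\beta]$, their left tangent vectors coincide: $c_1^-(\beta)=c_2^-(\beta)=:u$.

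\emph{Case 1: $x\in\Int K$.} Then $\beta$ is not a bounce time for either curve. The set of bounce times is finite, so on a neighbourhood of $\beta$ both curves are locally length-minimizing, arclength-parameterized curves in the open Riemannian manifold $\Int K$. Hence they are smooth $g$-geodesics there, and both are determined by the same point $x$ and velocity $-u$. Uniqueness of solutions to the geodesic ODE gives agreement on $[\beta,\beta+\delta')$, contradicting maximality.

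\emph{Case 2: $x\in\partial K$.} Here $\beta$ is a bounce time for both curves, and $c_i^+(\beta)$ is a unit vector in $C_x(K)$ polar to $u$. The Remark following \Cref{def:polar_vectors}, or equivalently the Lemma identifying polarity with the reflection law, says the polar partner of a unit vector in $C_x(K)$ is unique: writing $u=\tilde u+r\nu(x)$, it is $-\tilde u+r\nu(x)$. Therefore $c_1^+(\beta)=c_2^+(\beta)=:w$.

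On $(\beta,\beta+\delta)$ neither curve meets $\partial K$, again by finiteness of bounce times. So each is a geodesic of $\Int K$ on that interval, and each extends $C^1$ up to $\beta$ from the right with initial point $x$ and initial velocity $w$.

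We expect this last step to be the main obstacle. Geodesic ODE uniqueness must be invoked for the ambient complete metric $g$ on $\R^n$ rather than inside $K$. Since $g_K$ is the restriction of $g$, each $c_i|_{(\beta,\beta+\delta)}$ is an arc of the ambient $g$-geodesic through $x$ with velocity $w$. It remains to justify that the one-sided derivative $c_i^+(\beta)$ really is the initial velocity of that ambient geodesic. This follows from the regularity in the definition: the right derivative exists, and the geodesic equation gives bounded acceleration. Uniqueness for the ambient ODE then gives $c_1=c_2$ on $[\beta,\beta+\delta)$, again contradicting maximality.

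Transversality is not needed for this forward step, since both continuations are determined by $(x,w)$ alone. If $w$ happens to be tangent to $\partial K$, both curves still follow the same ambient geodesic, so they still agree. This completes the argument that $\beta=b$, and by reversal $\alpha=a$, so $c_1\equiv c_2$ on $[a,b]$.
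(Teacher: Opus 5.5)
Your proposal is correct and follows essentially the paper's argument: agreement is propagated using uniqueness for the geodesic ODE on reflection-free intervals and uniqueness of the polar (reflected) partner at bounce times. You organize this as a maximal-interval continuation with time reversal rather than the paper's explicit iteration over the finitely many bounce times, and your justification that the one-sided derivative $c_i^+(\beta)$ is the initial velocity of the ambient $g$-geodesic fills in a step the paper leaves implicit.
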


\begin{proof}
Let $\mathcal T_i\subset [a,b]$ denote the finite set of bounce times of $c_i$, for $i=1,2$. Since $\mathcal T_1\cup\mathcal T_2$ is finite and $(d,e)$ is open, we may choose $t_0\in (d,e)\setminus(\mathcal T_1\cup\mathcal T_2)$. Then both $c_1$ and $c_2$ are ordinary smooth geodesics in a neighborhood of
$t_0$. Since they agree on the open interval $(d,e)$, they have the same initial data at $t_0$:
$$c_1(t_0)=c_2(t_0), \qquad \dot c_1(t_0)=\dot c_2(t_0).$$
By uniqueness for the geodesic equation, the two curves agree on the maximal
open interval, containing $t_0$, on which neither curve reflects.

Next, we show that equality propagates across bounce times. Suppose that $c_1$ and $c_2$ agree on an interval of the form $(\tau-\varepsilon,\tau)$, where $\tau$ is a bounce time for one of the two curves. Then $c_1(\tau)=c_2(\tau)=:x_0\in \partial K$ and the left tangent vectors agree, namely
$$c_1^-(\tau)=c_2^-(\tau)=:v^-.$$
Let $\nu(x_0)$ be the unit normal to $\partial K$ at $x_0$ pointing into $K$. The billiard reflection law gives
$$c_i^+(\tau) + v^-= 2\,g_{x_0}(v^-,\nu(x_0))\,\nu(x_0), \quad i=1,2.$$
Hence $\dot c_1(\tau^+)=\dot c_2(\tau^+)$. Therefore the two outgoing geodesic arcs have the same initial position and the same initial velocity. By uniqueness for the geodesic equation, they agree on a right-neighborhood of $\tau$. Notice that the same argument shows that if the two curves agree on $(\tau,\tau+\varepsilon)$, then their right velocities agree at $\tau$, and the reflection law uniquely determines the incoming velocity. Hence the two incoming geodesic arcs agree on a left-neighborhood of $\tau$.

Since each billiard trajectory has only finitely many reflection times, this same argument can be iterated across all reflection times between $t_0$ and the endpoints $a,b$. Thus
$$c_1(t)=c_2(t)$$
for every $t\in [a,b]$.
\end{proof}

It is well known that billiard dynamics in $K$ is related to the geodesic flow of its double $D(K)$. In \cite{tabac}, the author establishes a one-to-one correspondence between billiard trajectories of $K$ and geodesic segments of the double $D(K)$ which cross $\partial K$ transversally.

The second part of this subsection is devoted to establishing a similar correspondence involving quasigeodesic curves introduced in section 2.

\begin{theorem}\label{thm:quasigeodesic_characterization}

Let $(K,g)$ be a billiard table and assume that it is a space of curvature bounded below by some $\kappa\in \R$ so that $D(K)$ is a $\CBB(\kappa)$-space. Let $\tilde c:[a,b]\to D(K)$ be a $\kappa$-quasigeodesic of $D(K)$, and assume that $c:=\pi_K\circ \tilde c:[a,b]\to K$ intersects $\partial K$ only transversally. Then $c$ is a billiard trajectory of $K$.

\end{theorem}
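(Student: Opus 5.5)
By \Cref{prop:geodesic_quasigeodesic}, applied on $D(K)$, $\tilde c$ is a Lipschitz regular curve parameterized by arclength. The plan is then to verify the three defining properties of a billiard trajectory for $c=\pi_K\circ\tilde c$: it is arclength-parameterized and regular, it has finitely many bounce times, it is locally length-minimizing away from them, and its one-sided tangent vectors are polar at each bounce.

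First, part (3) of \Cref{prop:double_projection} gives directly that $c$ is a Lipschitz regular curve in $K$ parameterized by arclength with respect to $d_g$.

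Second, I will show that the bounce set $\mathcal T=\{t: c(t)\in\partial K\}=\{t:\tilde c(t)\in\partial K\subset D(K)\}$ is finite. Fix $\tau\in\mathcal T$. Transversality means that $c^+(\tau)$ and $c^-(\tau)$ are not tangent to $\partial K$. In doubled boundary normal coordinates $(\theta,s)$, set $s(t):=s(\tilde c(t))$, so that $|s(t)|$ is the normal coordinate of $c(t)$. Transversality then says $s$ has nonzero one-sided derivatives at $\tau$, hence $s(t)\neq 0$ for $0<|t-\tau|<\delta$. So $\mathcal T$ consists of isolated points. Since $\mathcal T$ is closed in the compact interval $[a,b]$, it is finite.

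Third, take a maximal open interval $I$ of $[a,b]\setminus\mathcal T$. On $I$ the curve $\tilde c$ stays in one copy, say $K_+$, minus the boundary, and this region is an open subset of $D(K)$ where $g_{D(K)}$ is smooth and $\pi_K$ is a local isometry onto $\Int K$. Quasigeodesic concavity is a local condition: comparison points $p$ close to $\tilde c(t)$ lie in the smooth open region, and distances to nearby points are realized there. So $\tilde c|_I$ is a quasigeodesic of a smooth Riemannian manifold without boundary, and the "empty boundary" half of \Cref{prop:geodesic_quasigeodesic}, applied on small subintervals, shows it is a geodesic. Hence $c|_I$ is a geodesic of $\Int K$, in particular locally length-minimizing. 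This localization is the step I expect to be the main obstacle. The justification is that for $t$ in a compact subinterval $J\subset I$ and $p$ in a small convex ball $B$ avoiding $\partial K$, $d_{D(K)}$ restricted to $B$ coincides with the smooth Riemannian distance. The inequality \cref{eq:concavity_condition} for all such $p$ then characterizes geodesics by the local version of the cited proposition (via the Hessian comparison argument from \cite{PetruninSemiconcave}).

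Finally, at $\tau\in\mathcal T$, \Cref{prop:quasigeodesic_polar} applied in $D(K)$, a space without boundary, shows that $\tilde c^+(\tau)$ and $\tilde c^-(\tau)$ are polar, i.e.\ opposite unit vectors for $g_{D(K)}$. Part (2) of \Cref{prop:double_projection} then gives that $d_\tau\pi_K(\tilde c^\pm(\tau))$ are unit polar vectors in $C_{c(\tau)}(K)$. These are exactly $c^\pm(\tau)$: the one-sided derivatives of $\pi_K\circ\tilde c$ equal $d\pi_K$ applied to those of $\tilde c$, as in the proof of part (3), since $\pi_K(\theta,s)=(\theta,|s|)$ is positively homogeneous along rays. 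By the lemma above, polarity is equivalent to the billiard reflection law. Hence $c$ is a billiard trajectory.
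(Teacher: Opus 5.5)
Your proof is correct and follows the same route as the paper. Both arguments use the same four steps:
\begin{itemize}
\item regularity and unit speed of $c$ from \Cref{prop:double_projection};
\item finiteness of the bounce set from transversality;
\item local minimality away from bounces, via the boundaryless case of \Cref{prop:geodesic_quasigeodesic} applied on one open sheet of $D(K)$;
\item polarity at bounces, via \Cref{prop:quasigeodesic_polar} in $D(K)$ together with part (2) of \Cref{prop:double_projection}.
\end{itemize}
You also supply details the paper only asserts: isolation of bounce times through the one-sided derivatives of the normal coordinate, localization of the quasigeodesic condition to a small convex ball, and the identification $c^\pm(\tau)=d\pi_K(\tilde c^\pm(\tau))$.
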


\begin{proof}

Let $\tilde c:[a,b]\to D(K)$ be a $\kappa$-quasigeodesic of $(D(K),d_{D(K)})$ and assume that $c:=\pi_K\circ \tilde c:[a,b]\to K$ intersects $\partial K$ only transversally. Then, $\tilde c$ can also cross $\partial K$ only transversally (see \Cref{prop:double_projection}). In particular, this implies that $\tilde c$ can cross $\partial K$ at most finitely many times, and let us denote $\mathcal{T}\subset [a,b]$ this set of times.

First, we observe that, since $\tilde c$ is a $\kappa$-quasigeodesic, then $c$ is a Lipschitz regular curve in $K$ parameterized by arclength thanks to \Cref{prop:double_projection}.

Next, we claim that for each $t_0\in \mathcal{T}$ the unit vectors $c^+(t_0)$ and $c^-(t_0)$ are polar. Indeed, since $\tilde c$ is a $\kappa$-quasigeodesic, for each $t_0\in \mathcal{T}$ the unit vectors $\tilde c^+(t_0)$ and $\tilde c^-(t_0)$ are polar (see \Cref{prop:quasigeodesic_polar}). Moreover, \Cref{prop:double_projection} says that the tangent map $d_q\pi_K$ transforms polar vectors in $T_qD(K)$ to polar vectors in $C_{\pi_K(q)}K$.

To conclude, it remains to show that $c$ is locally length-minimizing at every $t_*\in [a,b]\setminus \mathcal{T}$. Notice that if $t_*\in [a,b]\setminus\mathcal{T}$ there exists $\varepsilon>0$ such that $(t_*-\varepsilon,t_*+\varepsilon)\cap \mathcal{T} =\emptyset$. Indeed, if it were not the case, there would be a sequence of times $\{t_i\}_{i\in \N}\subset\mathcal{T}$ such that $t_i\rightarrow t_*$ as $i\rightarrow \infty$. But, since $c$ is a continuous function and $\partial K$ is a closed set, we would have $c(t_*) = \lim\limits_{i \rightarrow \infty}c(t_i)\in \partial K$, contradicting the assumption $t_*\notin \mathcal{T}$. Therefore, we fix $t_*\in (a,b)\setminus\mathcal{T}$ and $\varepsilon>0$ such that $(t_*-\varepsilon,t_*+\varepsilon)\cap \mathcal{T} =\emptyset$ and we observe that the curve 
$$\hat{c}: = \tilde c\big|_{(t_*-\varepsilon,t_*+\varepsilon)} : (t_*-\varepsilon,t_*+\varepsilon)\rightarrow K$$
is a $\kappa$-quasigeodesic that is entirely contained in the interior of one the sheets of $D(K)$, say $\Int(K_+)$. Now, since $\Int(K_+)$ is a Riemannian manifold without boundary, \Cref{prop:geodesic_quasigeodesic} implies that $\hat{c}$ is a geodesic of $\Int(K_+)$. Hence the curve $c|_{(t_*-\varepsilon,t_*+\varepsilon)} = \pi_K\circ \hat c$ is a geodesic of $K$. Thus, $c$ is a billiard trajectory by definition.

\end{proof}

\subsection{Families of folds}

Let $K\subset \R^n$ be a billiard table as defined in the previous section and identify it with its image in $\R^{n+1}$ through the embedding $\R^n\hookrightarrow \R^{n+1}$, $x\mapsto (x,0)$. 
In what follows, let $\overline{g}$ be a smooth complete Riemannian metric on $\R^{n+1}$ and consider the hyperplane 
$$H:=\left\{(x,z)\in \R^n\times \R:\ z=0\right\}.$$

The goal of this section is to define a sufficiently general notion of folds over $K$ which should resemble the family of ellipsoids given in the introduction. The prototype of such notion is given by the following local construction.

Let $p_0\in \partial K$ and assume that $\g$ is the Euclidean metric in $\R^{n+1}$. Let $U\subset \R^{n+1}$ be an open bounded neighborhood of $p_0$ and choose $f\in C^{2,1}(K\cap\overline{U})$ with a regular value at $0$ and such that
$$K\cap U =\{(x,0)\in H: f(x)\ge 0\},\quad \partial K \cap U = f^{-1}(0).$$
Now, we construct the family of Riemannian hypersurfaces $\left\{\left(\M,g_\lambda\right)\right\}_{\lambda\in (0,1)}$, where
\begin{equation}\label{eq:family_folds_local}
\M = \left\{(x,x_{n+1})\in \R^{n+1}:\ x_{n+1}^2 = \lambda^2f(x),\ x\in K\cap U\right\}\subset \R^{n+1}
\end{equation}
and $g_\lambda$ is the metric on $\M$ induced by $\overline{g}$.

\begin{figure}
  \begin{minipage}{0.45\textwidth}
    \includegraphics[width=7.3cm]{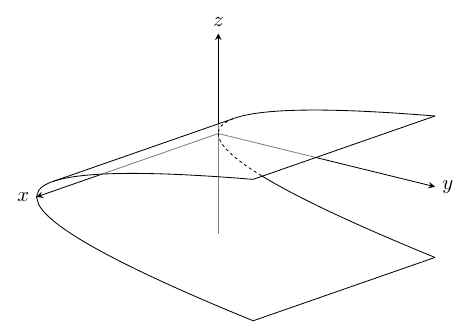}
  \end{minipage}
  \hfill
  \begin{minipage}{0.45\textwidth}
    \includegraphics[width=7.3cm]{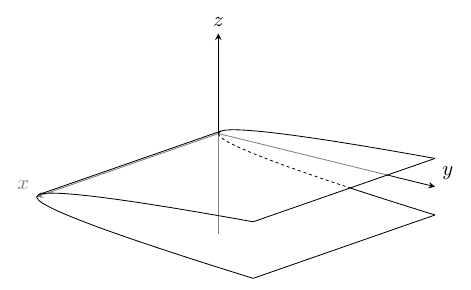}
  \end{minipage}
  \centering
  \begin{minipage}{0.45\textwidth}
    \includegraphics[width=7.3cm]{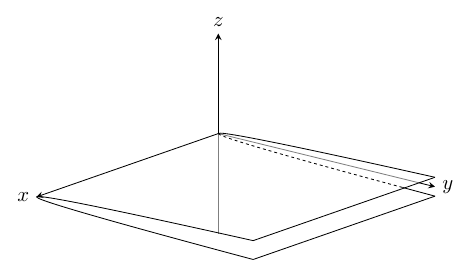}
  \end{minipage}
  \caption{A portion of the parabolic cylinders $\{z^2 = \lambda^2 y\}\subset \R^3$ for $\lambda = 0.5,0.2$ and $0.06$ respectively.}\label{fig:parabolic_cylinders}
\end{figure}

\begin{example}
Let $K$ be a half-space of $H$, e.g. $K = \{x_2\ge 0, \ x_{n+1}=0\}$. Fix $p_0\in \partial K$ and an open bounded neighborhood $U\subset H$ of $p_0$. Then, each $\M$ is a portion of an $n$-dimensional parabolic cylinder. \Cref{fig:parabolic_cylinders} illustrates the case $n=2$, showing the family of folds over a neighborhood of $p_0 = (0,1,0)$, for several values of $\lambda$.
\end{example}

\begin{example}
Let $n=2$ and $K = \{(x_1,x_2,x_3)\in \R^3:\ 1-x_1^2-x_2^2 \ge 0, \ x_3=0\}$ be the closed unit disk in $H\subset \R^3$. Then, if $p_0\in\partial K$ and $U$ is an open bounded subset of $H$ containing $K$, the family of surfaces in \cref{eq:family_folds_local} agrees with the family of ellipsoids we defined in the introduction:
$$M_\lambda = \Big\{(x_1,x_2,x_3)\in \R^3: \ x_3^2 = \lambda^2(1-x_1^2-x_2^2)\Big\} = \left\{(x_1,x_2,x_3)\in \R^3: \ x_1^2 + x_2^2+ \frac{x_3^2}{\lambda^2} = 1\right\}.$$
\end{example}

If $\g$ is not the Euclidean metric, we now present a similar local construction modeled on \eqref{eq:family_folds_local}. Let $\nu$ be a $\g$-unit normal vector field along $H=\{x_{n+1} = 0\}$ and define $F:H\times (-\rho,\rho)\to \R^{n+1}$ as 
$$F(x,t) = \exp_x^{\g}(t\nu_x),$$
where $\exp^{\g}$ is the standard exponential map associated to the Riemannian metric $\g$. Notice that, if $\rho>0$ is small enough, $F$ is a diffeomorphism onto a tubular neighborhood of $H$. Here, we define the collection $\{(M_\lambda,g_\lambda)\}_{\lambda\in (0,1)}$, as
$$M_\lambda = \{F(x,t)\in \R^{n+1}:\ t^2 = \lambda^2f(x), \ x\in K\cap U,\ t\in (-\rho,\rho)\}$$
and $g_\lambda$ is the metric on $M_\lambda$ induced by $\g$.

Looking at the above constructions, it is clear that our definition of family of folds over a table $K$ should include hypersurfaces made of two sheets which are joined at the boundary $\partial K$, and which flatten onto at least a portion of $K$ including some of $\partial K$ as $\lambda\to 0^+$. Indeed, the families presented above have several useful properties which are embodied in the following general definition.  

\begin{definition}\label{def:family_folds}

Let $p_0\in \partial K$. A \emph{family of folds over $p_0$} is a collection of embedded Riemannian hypersurfaces $\{(M_\lambda,g_\lambda)\}_{\lambda\in(0,1)}$ of $\R^{n+1}$ such that

\begin{enumerate}
  \item $g_\lambda$ is the Riemannian metric induced on $M_\lambda$ by the complete ambient metric $\g$;
  \item for every $\lambda\in (0,1)$ we have $M_\lambda\cap H \subseteq \partial K$;
  \item there is a compact neighborhood $C$ of $p_0$ in the double $D(K)$ and a map $\Phi_\lambda:\, C\to M_\lambda$ such that the following properties hold.
      \begin{itemize}
        \item $\Phi_\lambda$ is a topological embedding of topological manifolds with boundary;
        \item $\Phi_\lambda\big|_{\partial K\cap C} = \id_{\partial K\cap C}$;\vskip0.2cm
        \item $\lim\limits_{\lambda\to 0^+}\; d_{\g} (\Phi_\lambda(q), \pi(q)) = 0$ uniformly in $q\in C$.
        \item there is $T_0>0$ such that, for each $\lambda\in (0,1)$, every arclength-parameterized geodesic segment in $M_\lambda$ starting from $p_0$ can be extended to $[-T_0,T_0]$ and its image is contained in the interior of $\Phi_\lambda(C)$;
        \item $\lim\limits_{\lambda\to 0^+}\;\left|d_{g_\lambda}(\Phi_\lambda(q_1), \Phi_\lambda(q_2)) - d_{D(K)}(q_1,q_2)\right| = 0$ uniformly in $q_1,q_2\in C$.
      \end{itemize}
\end{enumerate}

\end{definition}

\begin{remark}
  Notice that, if $\{(M_\lambda,g_\lambda)\}_{\lambda\in(0,1)}$ is a family of folds over $p_0\in \partial K$ and $C\subset D(K)$ is the compact neighborhood of $p_0$ in which $\Phi_\lambda$ is defined, the distance functions $d_\lambda:C\times C\to \R$, defined by
  $$d_\lambda(q_1,q_2) = d_{g_\lambda}(\Phi_\lambda(q_1),\Phi_\lambda(q_2)),$$
  converge to $d_{D(K)}$ as $\lambda\to 0^+$ uniformly in $C\times C$.
\end{remark}

We postpone to Appendix A the proof of the fact that the local construction \cref{eq:family_folds_local} is indeed a family of folds over $p_0$ in the sense of \Cref{def:family_folds}, as it requires some time.

\subsection{Proof of the main theorem}

This subsection is devoted to proving \Cref{thm:main_theorem}. Let $K\subset H\subset \R^{n+1}$ be a billiard table, $p_0\in \partial K$ and let $(M_\lambda,g_\lambda)$ be a family of folds over $p_0$ in the sense of \Cref{def:family_folds}. Before we state and prove the main theorem, let us define the notation that will be valid until the end of this subsection.

We fix a decreasing sequence $\lambda_k\to 0^+$ in $(0,1)$ and we denote
$M_k:= M_{\lambda_k}$, $g_k:= g_{\lambda_k}$. Let $C\subset D(K)$ be a compact neighborhood of $p_0$, $\Phi_k: C\to M_k$ and $T_0>0$ be as in \Cref{def:family_folds}. Also, we denote by $d_k:C\times C\to \R$ the distance function defined by 
$$d_k(q_1,q_2)= d_{g_k}(\Phi_k(q_1),\Phi_k(q_2)).$$
In the following, we formulate two extra crucial assumptions.

\begin{assumption}\label{ass:assumption1}
There exists a $\kappa\in \R$ such that, for every $k\in \N$, there is a neighborhood of $\Phi_k(C)$ in $M_k$ which is a space of curvature bounded below by $\kappa$.
\end{assumption}

\begin{assumption}\label{ass:assumption2}
There is $\tilde T>0$ such that, for every $k\in \N$, each geodesic segment $\gamma:[-\tilde T,\tilde T]\to M_k$ with $\gamma(0) = p_0$ is $g_k$-length minimizing.
\end{assumption}

\begin{theorem}(= \Cref{thm:compactness_intro})\label{thm:main_theorem_sec3}
Assume that either \Cref{ass:assumption1} or \Cref{ass:assumption2} holds. Then there exists $T>0$ such that every sequence of arclength-parameterized geodesic segments
$$\left\{\gamma_k:[-T,T]\rightarrow M_k\right\}_{k\in \N},$$
that satisfies $\gamma_k(0)=p_0$ for all $k\in\N$, admits a subsequence $\{\gamma_m\}_{m\in \N}$ that converges to a continuous curve $\gamma:[-T,T]\rightarrow K$ uniformly in $[-T,T]$ as $m\rightarrow \infty$. Moreover, if $\gamma$ intersects $\partial K$ transversally, then it is a billiard trajectory of $K$.

\end{theorem}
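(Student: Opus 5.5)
We would choose $T:=T_0$ under \Cref{ass:assumption1} and $T:=\min\{T_0,\tilde T\}$ under \Cref{ass:assumption2}. The argument transports everything to the fixed compact set $C\subset D(K)$ via the embeddings $\Phi_k$. By \Cref{def:family_folds}, each $\gamma_k([-T,T])$ lies in the interior of $\Phi_k(C)$, so the lifted curves
$$\tilde\gamma_k:=\Phi_k^{-1}\circ\gamma_k:[-T,T]\to C$$
are well defined. Each $\tilde\gamma_k$ is $1$-Lipschitz with respect to $d_k$, since $d_k(\tilde\gamma_k(s),\tilde\gamma_k(t))=d_{g_k}(\gamma_k(s),\gamma_k(t))\le|t-s|$.

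\textbf{Compactness.} Set $\varepsilon_k:=\sup_{C\times C}|d_k-d_{D(K)}|$, which tends to $0$ by the remark following \Cref{def:family_folds}. Then
$$d_{D(K)}(\tilde\gamma_k(s),\tilde\gamma_k(t))\le|t-s|+\varepsilon_k,$$
so the family $\{\tilde\gamma_k\}$ is asymptotically equicontinuous with values in the compact set $C$. A standard Arzel\`a--Ascoli argument for asymptotically equicontinuous sequences, applied on a countable dense set of times and then extended using the $\varepsilon_k$-control, gives a subsequence $\tilde\gamma_m\to\tilde\gamma$ uniformly with respect to $d_{D(K)}$. The limit $\tilde\gamma$ is $1$-Lipschitz and satisfies $\tilde\gamma(0)=p_0$, because $\Phi_k$ fixes $\partial K\cap C$. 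Next define $\gamma:=\pi_K\circ\tilde\gamma$. The uniform estimate $d_{\g}(\Phi_k(q),\pi(q))\to0$ on $C$ gives
$$d_{\g}(\gamma_m(t),\gamma(t))\le d_{\g}\bigl(\Phi_m(\tilde\gamma_m(t)),\pi(\tilde\gamma_m(t))\bigr)+d_{\g}\bigl(\pi(\tilde\gamma_m(t)),\pi(\tilde\gamma(t))\bigr).$$
The first term goes to $0$ uniformly. The second is at most $d_{D(K)}(\tilde\gamma_m(t),\tilde\gamma(t))$, since $\pi_K$ is $1$-Lipschitz by \Cref{prop:double_projection} and $d_{\g}\le d_g$ on $K$; hence it also goes to $0$ uniformly. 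So $\gamma_m\to\gamma$ uniformly.

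\textbf{Identifying the limit.} Under \Cref{ass:assumption2}, each $\tilde\gamma_k$ is $d_k$-minimizing and arclength-parameterized, so \Cref{prop:geodesic_convergence} (with $M=C$) shows that $\tilde\gamma$ is a $d_{D(K)}$-minimizing unit-speed geodesic. A minimizing geodesic in the $\CBB$-space $D(K)$ is in particular a quasigeodesic. Under \Cref{ass:assumption1}, geodesics of the smooth manifolds $M_k$ are $\kappa$-quasigeodesics, since the manifolds have curvature at least $\kappa$ near $\Phi_k(C)$. Hence the $\tilde\gamma_k$ are $\kappa$-quasigeodesics for $d_k$, and \Cref{prop:quasigeodesic_limit} shows that $\tilde\gamma$ is a $\kappa$-quasigeodesic of $(C,d_{D(K)})$. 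In both cases, if $\gamma=\pi_K\circ\tilde\gamma$ meets $\partial K$ only transversally, \Cref{thm:quasigeodesic_characterization} makes $\gamma$ a billiard trajectory.

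\textbf{Main obstacle.} The delicate step is this last identification. \Cref{thm:quasigeodesic_characterization} needs $K$ to have curvature bounded below, so that $D(K)$ is $\CBB(\kappa)$; under \Cref{ass:assumption1} this should follow from the limit stability statement \cite[Proposition 10.7.1]{Course}. The quasigeodesic conditions are also only local in $C$, and only for base points $p$ in the relevant range, so the propositions must be applied on the compact neighborhood rather than globally. Under \Cref{ass:assumption2}, I would bypass quasigeodesic language instead. On subintervals avoiding $\partial K$, the limit is minimizing and hence geodesic in one sheet. At a transverse crossing of $\partial K$, minimality of $\tilde\gamma$ across the crossing time in the Lipschitz metric $g_{D(K)}=ds^2+h_{|s|}$ forces the tangential components of the one-sided velocities to match. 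After projecting, this is exactly the reflection law, checked as in \Cref{prop:double_projection}(2).
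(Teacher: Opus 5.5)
Your proposal is correct and follows the paper's proof essentially step by step. You lift to $C$ via $\Phi_k^{-1}$, extract a uniformly convergent subsequence by Arzel\`a--Ascoli using $d_{D(K)}(\tilde\gamma_k(s),\tilde\gamma_k(t))\le |t-s|+\varepsilon_k$, pass to $K$ by the same triangle inequality, and identify the limit through \Cref{prop:quasigeodesic_limit} or \Cref{prop:geodesic_convergence} followed by \Cref{thm:quasigeodesic_characterization}. Your separate first-variation argument at the crossings under \Cref{ass:assumption2} is a worthwhile extra: the paper invokes \Cref{thm:quasigeodesic_characterization} in that case even though $D(K)$ need not be a $\CBB(\kappa)$-space, as in the concave tables of Section~4.
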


\begin{proof}

We take $T>0$ such that $T<T_0$ and we let $\{\gamma_k\}_{k\in \N}$ be a sequence of geodesic segments $\gamma_k:[-T,T]\rightarrow \Int_{M_k}(\Phi_k(C))$, parameterized by arclength, such that $\gamma_k(0)=p_0$ for every $k\in \N$.

We consider the sequence of curves $\{\tilde \gamma_k\}_{k\in\N}$, where each
$$\tilde\gamma_k= \Phi_k^{-1}\circ \gamma_k : [-T,T]\to \Int(C)$$
is an arclength-parameterized geodesic segment of $C$ (with respect to $d_k$) such that $\tilde\gamma_k(0) = p_0$, and we claim that $\{\tilde\gamma_k\}_{k\in \N}$ satisfies the assumptions of the Ascoli-Arzelà theorem for the metric space $(C, d_{D(K)})$.

\begin{itemize}
    \item Since $C$ is compact, the sequence of curves is uniformly bounded.
    \item We claim that the sequence is uniformly equicontinuous. Set $\varepsilon_k:=\sup_{q_1,q_2\in C}|d_k(q_1,q_2)-d_{D(K)}(q_1,q_2)|$ and notice that, for each $t,s\in [-T,T]$,
        $$d_{D(K)}(\tilde\gamma_k(t),\tilde\gamma_k(s)) \le d_k(\tilde\gamma_k(t),\tilde\gamma_k(s)) + \varepsilon_k\le |t-s| + \varepsilon_k.$$
        Then, for each $\varepsilon>0$ there is $k_0\in \N$ such that $\varepsilon_k<\varepsilon/2$ whenever $k>k_0$. Thus, if $k>k_0$ and $|t-s|< \varepsilon/2$, we have
        $$d_{D(K)}(\tilde\gamma_k(t),\tilde\gamma_k(s)) < \varepsilon.$$
        Moreover, since for every $k\in \N$ the curve $\gamma_k$ is uniformly continuous, we have that for each $k\in \N$ and $\varepsilon>0$, there is $\delta_k>0$ such that if $|t-s|<\delta_k$, it holds $d_{D(K)}(\tilde\gamma_k(t),\tilde\gamma_k(s))<\varepsilon/2$. 
         
        Thus if we fix $\varepsilon>0$ and let $k_0\in \N$ as above, choose $\delta := \min(\delta_0,\ldots,\delta_{k_0},\varepsilon/2)$, and notice that, if $|t-s|<\delta$, we have
        $$d_{D(K)}(\tilde\gamma_k(t),\tilde\gamma_k(s))<\varepsilon$$
        for all $k\in \N$. 
        
\end{itemize}

As a consequence, the Ascoli-Arzelà theorem implies that there exists a subsequence $\{\tilde\gamma_m\}_{m\in \N}$ of $\{\tilde\gamma_k\}_{k\in \N}$ that converges to a certain continuous curve $\tilde\gamma : [-T,T]\rightarrow \Int(C)$ uniformly in $[-T,T]$ as $m\rightarrow \infty$. 

Next, we take $\gamma:=\pi\circ \tilde\gamma:[-T,T]\to K$ and we claim that $\gamma_m$ converges to $\gamma$ as $m\to\infty$ uniformly in $[-T,T]$ with respect to the ambient distance $d_{\g}$. Here we observe that, for every $t\in [-T,T]$,
\begin{equation}
\begin{split}
d_{\g}(\gamma_m(t), \pi(\tilde\gamma(t))) &\le\, d_{\g}(\gamma_m(t), \pi(\tilde\gamma_m(t)) + d_{\g}(\pi(\tilde\gamma_m(t)), \pi(\tilde\gamma(t))) \\ 
&= \, d_{\g}(\Phi_m(\tilde\gamma_m(t)), \pi(\tilde\gamma_m(t))) + d_{\g}(\pi(\tilde\gamma_m(t)), \pi(\tilde\gamma(t))),
\end{split}
\end{equation}
as $\Phi_m\circ \tilde\gamma_m = \gamma_m$ by definition. Notice that the first term tends to $0$ as $m\to \infty$, indeed
$$d_{\g}(\Phi_m(\tilde\gamma_m(t)), \pi(\tilde\gamma_m(t))\le \sup_{q\in C}\,d_{\g}(\Phi_m(q), \pi(q))\to 0,\quad \mbox{as}\ m\to \infty.$$
Moreover, we estimate the second term as
$$d_{\g}(\pi(\tilde\gamma_m(t)), \pi(\tilde\gamma(t)))\le d_{g_K}(\pi(\tilde\gamma_m(t)), \pi(\tilde\gamma(t)))\le d_{D(K)}(\tilde\gamma_m(t), \tilde\gamma(t))\to 0,$$
as $m\to \infty$, thanks to the uniform convergence of $\tilde \gamma_m$ to $\tilde\gamma$ in $D(K)$ and to the fact that $\pi:D(K)\to K$ is a $1$-Lipschitz map.

Now, we notice that if \Cref{ass:assumption1} holds, then each distance function $d_k$ turns $\Int(C)$ into a space of curvature bounded below by some $\kappa\in \R$ and so does $d_{D(K)}$ because $d_k\to d_{D(K)}$ as $k\to \infty$ uniformly in $C$ (see discussion in section 2.4). In particular, every $\gamma_k$ is a $\kappa$-quasigeodesic of $(\Int(C),d_k)$. Then, \Cref{prop:quasigeodesic_limit} implies that $\tilde\gamma$ is a $\kappa$-quasigeodesic of $(\Int(C),d_{D(K)})$.

Alternatively, if \Cref{ass:assumption2} holds and up to choosing a smaller $T$ so that $T<\tilde T$, each $\tilde\gamma_m$ is length-minimizing with respect to $d_k$. As a consequence, \Cref{prop:geodesic_convergence} implies that $\tilde\gamma$ is a $d_{D(K)}$-minimizing geodesic segment of $\Int(C)$.

In either case, since $\gamma = \pi_K\circ \tilde \gamma$, then if $\gamma$ intersects $\partial K$ only transversally, \Cref{thm:quasigeodesic_characterization} (and the preceding discussion) implies that $\gamma$ is a billiard trajectory of $\pi_K(C)\subset K$ and hence of $K$. 

\end{proof}

\begin{corollary}[= \Cref{thm:main_theorem}]\label{cor:main_corollary_sec3}

Let $T>0$ as in \Cref{thm:main_theorem_sec3} and assume that either \Cref{ass:assumption1} or \Cref{ass:assumption2} holds. For every billiard trajectory $\gamma:[-T,T]\to K$ reflecting at $p_0:=\gamma(0)\in \partial K$, there is a sequence of arclength-parameterized geodesic segments $\gamma_k:[-T,T]\rightarrow M_k$ with $\gamma_k(0)=p_0$ such that $\gamma_k$ converges to $\gamma$ as $k\rightarrow \infty$ uniformly in $[-T,T]$ with respect to the ambient distance $d_{\g}$.

\end{corollary}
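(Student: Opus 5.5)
Given the billiard trajectory $\gamma:[-T,T]\to K$ reflecting transversally at $p_0=\gamma(0)$, the plan is to produce geodesics $\gamma_k$ on $M_k$ whose initial data at $p_0$ mimic $\gamma$. Then we use \Cref{thm:main_theorem_sec3} to extract a limit, and \Cref{prop:billiard_uniqueness} to show the limit is $\gamma$.

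\textbf{Lift and choose initial data.} First lift $\gamma$ to the double: let $\tilde\gamma:[-T,T]\to D(K)$ be the curve with $\tilde\gamma|_{[-T,0]}$ in the sheet $K_-$ and $\tilde\gamma|_{[0,T]}$ in $K_+$, so that $\pi_K\circ\tilde\gamma=\gamma$. Near $t=0$ this lift is a geodesic crossing $\partial K$ transversally in doubled coordinates $(\theta,s)$, with unit velocity $w=\tilde u+\alpha\partial_s$, $\alpha\neq 0$. Here $d_{p_0}\pi_K(\pm w)$ recover the incoming and outgoing directions, which are polar by the reflection law. Since $\Phi_k$ is only a topological embedding, I would not push $w$ forward. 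Instead, for each $k$ I pick a point $q_\delta=\tilde\gamma(\delta)$ with small fixed $\delta>0$, and choose a unit vector $v_k\in T_{p_0}M_k$ such that the $M_k$-geodesic from $p_0$ in direction $v_k$ passes as close as possible to $\Phi_k(q_\delta)$. Concretely, take $v_k$ to be the initial direction of a $g_k$-minimizing geodesic from $p_0$ to $\Phi_k(q_\delta)$. Such a geodesic exists by compactness and stays in $\Phi_k(C)$ for $\delta$ small, using the uniform convergence $d_k\to d_{D(K)}$ and the containment property of \Cref{def:family_folds}. We then extend it to $\gamma_k:[-T,T]\to M_k$ using the $T_0$-extension property, arranging $T<T_0$.

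\textbf{Extract and identify the limit.} By \Cref{thm:main_theorem_sec3}, a subsequence of $\tilde\gamma_k=\Phi_k^{-1}\circ\gamma_k$ converges uniformly to some $\tilde\sigma$ with $\pi\circ\tilde\sigma=\sigma$, and $\tilde\sigma(0)=p_0$. Because $d_k(p_0,\tilde\gamma_k(\delta))=\delta=d_{D(K)}(p_0,q_\delta)$, passing to the limit gives $\tilde\sigma(\delta)=q_\delta$ whenever $\delta$ is below the minimizing radius. On $[0,\delta]$, $\tilde\sigma$ is a limit of $d_k$-minimizers, hence a $d_{D(K)}$-minimizing geodesic by \Cref{prop:geodesic_convergence}. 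Since $\delta$ is small and $q_\delta$ lies in $\Int(K_+)$, the unique such minimizer is $\tilde\gamma|_{[0,\delta]}$. The subtle point here is that minimizers from a boundary point to a nearby interior point along a transversal direction are unique, which holds for small $\delta$ by transversality. Thus $\sigma=\gamma$ on $(0,\delta)$.

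\textbf{Conclude with uniqueness.} To conclude, I want $\sigma$ to be a billiard trajectory on all of $[-T,T]$, so that \Cref{prop:billiard_uniqueness} gives $\sigma\equiv\gamma$. The last clause of \Cref{thm:main_theorem_sec3} requires $\sigma$ to meet $\partial K$ transversally. This is the main obstacle: a priori the limit could graze the boundary. My first attempt is a continuation argument. Let $t^*$ be the supremum of times up to which $\sigma=\gamma$. By the quasigeodesic or minimizing property and \Cref{thm:quasigeodesic_characterization} applied on subintervals where $\sigma$ coincides with $\gamma$ (which is transversal), together with continuity of one-sided tangents of quasigeodesics, the tangents of $\sigma$ at $t^*$ match those of $\gamma$. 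Transversality of $\gamma$ at $t^*$ then lets us extend the equality past $t^*$, as in the proof of \Cref{prop:billiard_uniqueness}. The same argument runs backwards on $[-T,0]$.

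Finally, since every subsequence has a further subsequence converging to $\gamma$, the whole sequence converges to $\gamma$ uniformly, with respect to $d_{\g}$ as in the proof of \Cref{thm:main_theorem_sec3}.
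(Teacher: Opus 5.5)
Your construction and identification steps are the paper's own proof. You take $g_k$-minimizing geodesics from $p_0$ to $\Phi_k$ of a lift of a nearby point $\gamma(\delta)$, extend them to $[-T,T]$ using $T<T_0$, extract a limit with \Cref{thm:main_theorem_sec3}, identify the limit on $[0,\delta]$ by uniqueness of short minimizers, and propagate. There are two slips in this part. First, $\tilde\gamma_k$ reaches $q_\delta$ at time $t_k=d_k(p_0,q_\delta)$, not at time $\delta$. So $\tilde\sigma(\delta)=q_\delta$ should come from $\tilde\gamma_k(t_k)=q_\delta$ and $t_k\to\delta$; the identity $d_k(p_0,\tilde\gamma_k(\delta))=\delta$ only places $\tilde\sigma(\delta)$ on a sphere. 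Second, the lift $\tilde\gamma$ must switch sheets at every bounce in $[-T,T]$, not only at $t=0$. You are right that a continuation step is needed. The paper's proof simply asserts that the limit is a billiard trajectory, although the last clause of \Cref{thm:main_theorem_sec3} presupposes that the limit meets $\partial K$ transversally, which is not known a priori. However, your justification of that step does not go through as written.

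At a bounce time $t^*$, the left tangent of $\tilde\sigma$ agrees with that of $\tilde\gamma$ trivially, since the curves coincide on $[t^*-\varepsilon,t^*]$. What you must show is that the right tangent exists and equals the continuation $w$ of the incoming velocity in doubled coordinates. Then $\tilde\sigma$ enters the interior of the other sheet, where it is a $g$-geodesic with the same initial data as the reflected branch of $\gamma$. Applying \Cref{thm:quasigeodesic_characterization} on intervals where $\sigma=\gamma$ says nothing about times after $t^*$. Under \Cref{ass:assumption1}, the fact that does the work is \Cref{prop:quasigeodesic_polar} applied in $D(K)$. Since $D(K)$ has no boundary, polar means opposite, so $\tilde\sigma^+(t^*)=-\tilde\sigma^-(t^*)$. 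Under \Cref{ass:assumption2} you only know that $\tilde\sigma$ is a $d_{D(K)}$-minimizer. $D(K)$ need not be a $\CBB$ space (this is exactly the concave situation of \Cref{prop:concave_table}), so no quasigeodesic tool is available. You need a separate no-corner argument for minimizers of the continuous metric $g_{D(K)}$. For example, freeze the metric at $x_0=\tilde\sigma(t^*)$ and use $d_{D(K)}(\tilde\sigma(t^*-h),\tilde\sigma(t^*+h))=2h$. Near-equality in the triangle inequality for the frozen Euclidean norm then forces $\tilde\sigma(t^*+h)=x_0+hw+o(h)$. The same argument is needed at $t=0$ for the backward direction. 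With this supplied, your argument is complete. It also no longer needs the unverified appeal to the last clause of \Cref{thm:main_theorem_sec3}.
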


\begin{proof}
  
Let us assume that $p_0=(x_0,0)\in \partial K$ is an isolated reflection of the billiard trajectory $\gamma$. Since $\gamma$ reflects transversally at $p_0$, we can pick $p_*:=\gamma(t_*)$ for a $t_*\in[0,T]$ sufficiently small such that $\gamma|_{[0,t_*]}$ is the unique minimizing geodesic in $\Int(K)$ joining $p_0$ and $p_*$ and both lifts of $p_*$ to $D(K)$ are in the interior of $C\subset D(K)$. Then, if $\tilde p_*$ is one fixed lift of $p_*$, we consider the points $p_k:= \Phi_k(\tilde p_*)\in M_k$ and, by definition of family of folds, we observe that
$$\lim_{k\to \infty}\, d_{\g}(p_k,p_*) = \lim_{k\to \infty}\, d_{\g}(\Phi_k(\tilde p_*),\pi(\tilde p_*))= 0$$
and
$$d_{g_K}(p_0,p_*) = d_{D(K)}(p_0,\tilde p_*) = \lim_{k\to \infty}\, d_{g_k}(p_0,p_k).$$
In particular, considering only $k>k_0$ for some fixed $k_0$ large enough, it is not restrictive to assume $t_k:=d_{g_k}(p_0,p_k)< T$, as $t_*< T$.

Now, let $c_k:[0,t_k]\rightarrow M_k$ be a $d_{g_k}$-minimizing arclength-parameterized curve in $\Phi_k(C)$ connecting $p_0$ and $p_k$, which always exists thanks to the direct method of calculus of variations since $(\Phi_k(C),d_{g_k})$ is compact length space, and we claim that it is indeed a Riemannian geodesic of $\Int_{M_k}(\Phi_k(C))$. Since $p_0\in\Int_{M_k}(\Phi_k(C))$, there is a small time $0<\hat t\le t_k$ for which $c_k\big|_{[0,\hat t]}$ is a Riemannian geodesic of $\Int_{M_k}(\Phi_k(C))$. Since $t_k\le T<T_0$, \Cref{def:family_folds} says that $c_k\big|_{[0,\hat t]}$ extends to a geodesic segment $\hat c_k:[0,t_k]\to \Int_{M_k}(\Phi_k(C))$ and the uniqueness of the geodesic initial-value problem yields $c_k \equiv \hat c_k$. Moreover, we can further extend $c_k$ to a geodesic segment $\gamma_k:[-T,T]\rightarrow \Int_{M_k}(\Phi_k(C))$. 

Here, \Cref{thm:main_theorem_sec3} implies that, up to extracting a subsequence and identifying $\gamma_k$ with it, $\gamma_k$ converges to a billiard trajectory $\hat{\gamma}:[-T,T]\to K$ uniformly in $[-T,T]$. In particular, since $\hat\gamma$ is arclength-parameterized, we have 
$$\mbox{Length}_K(\hat\gamma|_{[0,t_*]})= t_* = \mbox{Length}_K(\gamma|_{[0,t_*]}).$$
Also, we can check that $\hat\gamma(t_*) = p_*$:
$$d_{\g}(\hat\gamma(t_*),p_*)\le \lim_{k\to \infty}(d_{\g}(\hat\gamma(t_*),\hat\gamma(t_k)) + d_{\g}(\hat\gamma(t_k),\gamma_k(t_k)) + d_{\g}(p_k,p_*)) = 0.$$
Thus, since $\hat\gamma(t_*) = p_* = \gamma(t_*)$, the two curves must coincide in $[0,t_*]$ since $\gamma$ was assumed to be the unique short geodesic joining $p_0$ and $p_*$.

Therefore, since two billiard trajectories coincide in a non-empty open interval, they must be the same by \Cref{prop:billiard_uniqueness}.

\end{proof}

\section{Examples of admissible Euclidean billiard tables}

In this section we present types of billiard tables $K$ to which \Cref{thm:main_theorem} is applicable, i.e. either \Cref{ass:assumption1} or \Cref{ass:assumption2} is satisfied. In what follows, we set $\g$ to be the Euclidean metric on $\R^{n+1}$, $H=\{(x,x_{n+1})\in\R^{n+1}:\ x_{n+1}=0\}$ and we let $K\subset H$ be a billiard table. 

Recall that for any fixed $p_0\in \partial K$, there is an open bounded neighborhood $U\subset \R^{n+1}$ of $p_0$ and a function $f\in C^{2,1}(\overline{U}\cap K)$ with a regular value at $0$ such that
\begin{equation}\label{eq:table}
  K\cap U = \{(x,0)\in H:\ f(x)\ge 0\}\quad \text{and}\quad \partial K\cap U = f^{-1}(0).
\end{equation}
Also, we will denote by $\nabla f(x)$ and $D^2f(x)$ the Euclidean gradient and Hessian matrix of $f$ respectively. 

In the following propositions we will always refer to the specific family of folds $\{\M,g_\lambda\}_{\lambda\in (0,1)}$ constructed in \cref{eq:family_folds_local}, where
$$\M = \left\{(x,x_{n+1})\in \R^{n+1}:\ x_{n+1}^2 = \lambda^2f(x),\ x\in K\cap U\right\}\subset \R^{n+1}$$
and $g_\lambda$ is the metric on $\M$ induced by $\overline{g}$.

\begin{proposition}\label{prop:convex_billiards}

Let $K\subset H$ be a convex billiard table. Then, for every $p_0\in \partial K$, there exists an open bounded neighborhood $U\subset \R^{n+1}$ of $p_0$ and $f\in C^{2,1}(\overline{U}\cap K)$ satisfying \cref{eq:table} such that the sectional curvatures of the fold $(\M,g_\lambda)$ are nonnegative at every point for each $\lambda\in (0,1)$, i.e. \Cref{ass:assumption1} is satisfied with $\kappa = 0$.

\end{proposition}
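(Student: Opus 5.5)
The plan is to choose the defining function $f$ to be \emph{concave}. Then each fold $\M$ is a piece of the boundary of a convex region of $\R^{n+1}$, and nonnegativity of the sectional curvatures follows from the Gauss equation.

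\textbf{Step 1: choice of $U$ and $f$.} Fix $p_0\in\partial K$. After a rigid motion of $H$, I assume $p_0=0$ and that the inner unit normal of $\partial K$ at $p_0$ is $e_n$. Since $\partial K$ is of class $C^{2,1}$, there is a small open cylinder $U=B'\times(-\delta,\delta)\times(-\delta,\delta)\subset\R^{n+1}$, in coordinates $(x',x_n,x_{n+1})$, with the following property. There is a $C^{2,1}$ function $\varphi:B'\to(-\delta/2,\delta/2)$ such that
$$K\cap U=\{(x',x_n,0)\in U:\ x_n\ge \varphi(x')\}.$$
Convexity of $K$ forces $\varphi$ to be convex, because its epigraph is locally the convex set $K$. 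I set
$$f(x',x_n)=x_n-\varphi(x').$$
This $f$ is concave and of class $C^{2,1}$. Its gradient $\nabla f=(-\nabla\varphi,1)$ never vanishes, and \cref{eq:table} holds by construction.

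\textbf{Step 2: $\M$ bounds a convex set.} Consider $h_\lambda(x,x_{n+1})=\lambda^2 f(x)-x_{n+1}^2$ on $(K\cap U)\times\R$. As a sum of concave functions it is concave, so the superlevel set $\Omega_\lambda=\{h_\lambda\ge0\}$ is convex. Moreover $\M=h_\lambda^{-1}(0)$ and $\nabla h_\lambda=(\lambda^2\nabla f,-2x_{n+1})\neq0$ everywhere, including along the fold $x_{n+1}=0$. Hence $\M$ is an embedded $C^{2,1}$ hypersurface forming part of $\partial\Omega_\lambda$. Take the unit normal $N=-\nabla h_\lambda/|\nabla h_\lambda|$, which points out of $\Omega_\lambda$. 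With respect to $N$, the second fundamental form is
$$\mathrm{II}(u,u)=-\frac{D^2h_\lambda(u,u)}{|\nabla h_\lambda|}\quad (u\in T\M).$$
It is positive semidefinite, because $D^2h_\lambda=\mathrm{diag}(\lambda^2D^2f,-2)$ is negative semidefinite.

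\textbf{Step 3: Gauss equation.} In Euclidean ambient space the Gauss equation gives, for orthonormal $u,v\in T_q\M$,
$$\sec(u,v)=\mathrm{II}(u,u)\,\mathrm{II}(v,v)-\mathrm{II}(u,v)^2.$$
This is $\ge0$ by the Cauchy--Schwarz inequality for the semidefinite form $\mathrm{II}$. This gives the claim with $\kappa=0$. To meet \Cref{ass:assumption1}, I would shrink the compact neighborhood $C$ (and $T_0$) so that a neighborhood of $\Phi_\lambda(C)$ stays away from the artificial edge $\M\cap\partial U$. Then no boundary condition is needed, and the interior curvature bound is the whole requirement.

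\textbf{Main obstacle.} I expect the main difficulty to be regularity. Since $f$ is only $C^{2,1}$, the hypersurface $\M$ is only $C^{2,1}$, so its sectional curvatures exist only almost everywhere. Also, the assumptions require a smooth Riemannian manifold. To handle this, I would first try mollifying $\varphi$ into smooth convex functions $\varphi_\varepsilon\to\varphi$ in $C^2$; convolution preserves convexity. Each corresponding fold is then smooth with $\sec\ge0$ by Steps 2 and 3. One can then either state the result for these smooth approximants, or pass to the limit using stability of $\CBB(0)$ under uniform convergence of distances, as in Section 2.4. Alternatively, if $\partial K$ is assumed smooth, the argument above applies verbatim with no approximation.
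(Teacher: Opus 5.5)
Your proposal is correct and follows essentially the paper's argument: the paper uses $F_\lambda=x_{n+1}^2-\lambda^2 f=-h_\lambda$, observes that $D^2F_\lambda=\mathrm{diag}(-\lambda^2D^2f,2)$ is positive semidefinite because $D^2 f$ is negative semidefinite, and then gets $\sec\ge 0$ from the Gauss equation and Cauchy--Schwarz, exactly as you do. Your explicit concave defining function $f=x_n-\varphi(x')$ with $\varphi$ convex supplies a justification the paper only asserts, and your caveat that a $C^{2,1}$ fold is not a smooth Riemannian manifold is a regularity issue the paper does not address.
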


\begin{proof}
  
Let $p_0\in \partial K$ be a boundary point. Since $K$ is a convex billiard table, there exists an open bounded neighborhood $U\subset \R^{n+1}$ of $p_0$ and $f\in C^{2,1}(\overline{U}\cap K)$ satisfying \cref{eq:table} such that $D^2f(x)$ is negative semi-definite for every $(x,0)\in K\cap U$. 

If $\{\M\}_{\lambda\in (0,1)}$ denotes the family of folds \eqref{eq:family_folds_local}, we notice that $\M$ is the zero-level set of the function
$$F_\lambda(x,x_{n+1}) = x_{n+1}^2 -\lambda^2 f(x),$$
which has a regular value at $0$. Then, we know that, for every $q=(x,x_{n+1})\in \M$, the scalar second fundamental form of $\M$ at $q$ (associated to the unit normal field given by the gradient of $F_\lambda$) reads
$$h_q(v,w)= \frac{v^TD^2F_\lambda(q)\ w}{|\nabla F_\lambda(q)|}, \quad \text{for}\ v,w\in T_q\M,$$
where $\nabla F_\lambda(q)$ and $D^2F_\lambda(q)$ denote the Euclidean gradient and Hessian matrix of $F_\lambda$ at $q$ respectively.
We further notice that
$$D^2F_\lambda(x,x_{n+1}) = \begin{pmatrix}-\lambda^2 \ D^2f(x) & 0\\ 0 & 2\end{pmatrix},$$
which is positive semi-definite since $D^2f(x)$ is negative semi-definite. Thus, the bilinear form $h_q$ is positive semi-definite.

Now, we recall that for every pair of linearly independent vectors $v,w\in T_q\M$, the Gauss equation for a Riemannian hypersurface (see \cite[Theorem 8.13]{LeeRiemannian}) yields that the sectional curvature of $\M$ at $q$ associated to the plane spanned by $v,w$ is
$$\mbox{sec}^q_\lambda(v,w) = \frac{h_q(v,v)h_q(w,w)- h_q(v,w)^2}{\overline{g}_q(v,v)\overline{g}_q(w,w)- \overline{g}_q(v,w)^2}.$$
Since $h_q$ is positive semi-definite and $\overline{g}_q$ is positive definite, it is well known that for all linearly independent $v,w\in T_q\M$ it holds
$$h_q(v,v)h_q(w,w)- h_q(v,w)^2\ge 0 \quad \text{and}\quad \overline{g}_q(v,v)\overline{g}_q(w,w)- \overline{g}_q(v,w)^2>0.$$ 
This implies that $\mbox{sec}_\lambda^q(v,w)\ge 0$ for every $\lambda\in (0,1)$.

\end{proof}

Therefore, \Cref{prop:convex_billiards} and \Cref{cor:main_corollary_sec3} prove \Cref{thm:convex_billiard_intro} in the Introduction. However, \Cref{ass:assumption1} is not satisfied for certain concave tables.

\begin{example}

Let $n=2$ and let 
$$K=\{(x_1,x_2,0)\in \R^3:\ x_1^2-x_2\ge0\}$$ 
be the concave portion of the $x_1x_2$-plane whose boundary is the parabola $\{x_2=x_1^2\}$. Let $p_0=(0,0,0)\in \partial K$ and let $U\subset \R^3$ be an open, bounded neighborhood of $p_0$. Then, a direct computation shows that the Gaussian curvature of the surface $\M$ over $K\cap U$ at $p_0$ is
$$G_{\M}(p_0) = -\frac{4}{\lambda^2},$$
which is not bounded below uniformly in $\lambda$.

However, for each $\lambda\in (0,1)$, $\M$ is an open subset of a closed simply connected manifold of non-positive curvature. Therefore every geodesic segment in $\M$ must be length-minimizing and \Cref{ass:assumption2} applies.

\end{example}

The above example generalizes to a natural class of concave planar billiard tables, namely the ones whose complement is a convex table.

\begin{proposition}\label{prop:concave_table}
Let $(a,b)\subset \R$ and let $\varphi\in C^{2,1}(a,b)$ satisfy $\varphi^{\prime\prime}\ge 0$. Assume that $K=\{(x,y)\in\R^2:y\le \varphi(x),\ x\in (a,b)\}$. Then for each $p_0\in \partial K$ and $\lambda\in (0,1)$ the fold $\M$ is simply connected and has non-positive Gaussian curvature.
\end{proposition}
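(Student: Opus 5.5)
We prove the two assertions separately, using the explicit local description of the fold. Near $p_0$ the table is $K\cap U=\{(x,y,0):\ f(x,y)\ge 0\}$ with $f(x,y)=\varphi(x)-y$. This $f$ is $C^{2,1}$, and $\nabla f=(\varphi'(x),-1)\neq 0$, so $0$ is a regular value. The fold is then
$$\M=\{(x,y,z)\in\R^3:\ z^2=\lambda^2(\varphi(x)-y)\}.$$

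\textbf{Simple connectivity.} Solving for $y$ gives $y=\varphi(x)-z^2/\lambda^2$. Hence $\M$ is the graph over the $(x,z)$-variables of the $C^{2,1}$ function
$$\psi_\lambda(x,z):=\varphi(x)-\frac{z^2}{\lambda^2},$$
defined on the strip $(a,b)\times\R$, or on $(a,b)\times(-\lambda\sqrt{c},\lambda\sqrt{c})$ if $U$ truncates the fold. Restricting to $x\in U$, we may take $U$ to be a box so that the parameter domain is a convex set. A graph over a convex planar domain is homeomorphic to that domain, so $\M$ is simply connected.

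\textbf{Curvature sign.} Since $\M$ is a graph over the $(x,z)$-plane, the Gaussian curvature of the graph $y=\psi_\lambda(x,z)$ is
$$G=\frac{\psi_{xx}\psi_{zz}-\psi_{xz}^2}{\bigl(1+\psi_x^2+\psi_z^2\bigr)^2}.$$
Here $\psi_{xx}=\varphi''(x)\ge 0$, $\psi_{zz}=-2/\lambda^2<0$ and $\psi_{xz}=0$. The numerator is therefore $-2\varphi''(x)/\lambda^2\le 0$, and so $G\le 0$. Equivalently, one can run the Gauss-equation computation of \Cref{prop:convex_billiards} with $F_\lambda=z^2-\lambda^2 f$. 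Its Hessian is $\mathrm{diag}(-\lambda^2\varphi'',0,2)$, and the restriction of this form to $T_q\M$ is indefinite or degenerate, giving a nonpositive determinant. As a check, at $\varphi(x)=-x^2$, i.e. the parabola example with the reflected orientation, this gives $-4/\lambda^2$ at $p_0$, in agreement with the preceding example.

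\textbf{Main obstacle.} The only delicate point is regularity, since $\varphi\in C^{2,1}$ means the curvature is defined only almost everywhere beyond the continuous second derivatives. Because $\varphi''$ is continuous, $G$ is a continuous function and the sign argument is pointwise. If one then wants \Cref{ass:assumption2} from Cartan--Hadamard-type reasoning, as in the example, one needs a complete simply connected nonpositively curved extension of $\M$. For this step I would take $U$ so that the fold is an open subset of the full graph of $\psi_\lambda$ over a convex domain, and appeal to the CAT(0)/Alexandrov version of Cartan--Hadamard for $C^{1,1}$ metrics. Alternatively, one can argue directly that on a convex domain, geodesics of a nonpositively curved simply connected surface cannot form conjugate points or bigons, so local geodesics are minimizing.
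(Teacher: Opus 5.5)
Your proof is correct and is essentially the paper's. The paper parameterizes $\M$ globally by $\Psi_\lambda(x,s)=(x,\varphi(x)-s^2,\lambda s)$ over $(a,b)\times\R$, which is exactly your graph $y=\psi_\lambda(x,z)$ with $z=\lambda s$, and its formula $G=-2\lambda^2\varphi''(x)/\bigl(4s^2+\lambda^2(1+\varphi'(x)^2)\bigr)^2$ is your graph-curvature expression after this substitution.

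Two slips, neither affecting the argument. Your sanity check should use $\varphi(x)=x^2$, since the example's table is $\{x_2\le x_1^2\}$; with $\varphi=-x^2$ the hypothesis fails and your formula gives $+4/\lambda^2$. Also, a box $U$ need not make the $(x,z)$-domain convex, but this does not matter because, like the paper, you can work over the whole strip $(a,b)\times\R$.
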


\begin{remark}
  Notice that, since $\varphi\in C^{2,1}(a,b)$, it can be extended to a $C^2$ function defined on $\R$. Thus, we can always see $\M$ as a portion of a closed surface of $\R^3$, which is complete by the Hopf-Rinow theorem. This implies that \Cref{ass:assumption2} is satisfied.
\end{remark}

\begin{proof}[Proof of \Cref{prop:concave_table}]

Fix $p_0\in \partial K$ and $\lambda\in (0,1)$. We set $V = (a,b)\times \R$ and parameterize $\M$ through the global diffeomorphism $\Psi_\lambda:V\to \R^3$,
$$\Psi_\lambda(x,s)= \left(x,\varphi(x)-s^2,\lambda s\right).$$
Hence $M_{\lambda,\varphi}$ is diffeomorphic to $V$, and so it is simply connected.

To conclude, we pull back the Euclidean metric using the diffeomorphism $\Psi_\lambda$ and we compute the Gaussian curvature of the obtained metric. A direct computation shows that
$$\Psi_\lambda^*\g = (1+\varphi'(x)^2)\,dx^2 -4s\varphi'(x)\,dx\,ds + (4s^2+\lambda^2)\,ds^2,$$
and the unit normal vector is 
$$ N = \frac{\partial_x\Psi_\lambda\times \partial_s\Psi_\lambda} {|\partial_x\Psi_\lambda\times \partial_s\Psi_\lambda|} = \frac{(\lambda\varphi'(x),-\lambda,-2s)} {\sqrt{4s^2+\lambda^2(1+\varphi'(x)^2)}}.$$
Finally, the coefficients of the second fundamental form are
$$\langle \partial_{xx}\Psi_\lambda,N\rangle = -\frac{\lambda\varphi''(x)} {\sqrt{4s^2+\lambda^2(1+\varphi'(x)^2)}},\quad \langle \partial_{xs}\Psi_\lambda,N\rangle=0,$$
$$\langle \partial_{ss}\Psi_\lambda,N\rangle = \frac{2\lambda} {\sqrt{4s^2+\lambda^2(1+\varphi'(x)^2)}}. $$
Therefore, the Gauss equation for a Riemannian surface in $\R^3$ yields that the Gaussian curvature of $\M$ is
$$ G_{\M}(x,s)= -\frac{2\lambda^2\varphi''(x)} {(4s^2+\lambda^2(1+\varphi'(x)^2))^2}$$
which is non-positive at every $(x,s)\in \R^2$.

\end{proof}

\appendix

\section{An explicit local construction of a family of folds}

The goal of this appendix is to prove that the local construction of the folds at the beginning of section 3.2 satisfies all the technical conditions of \Cref{def:family_folds}. In what follows, we will assume that $\g$ is the Euclidean metric on $\R^{n+1}$ and $H:=\left\{(x,x_{n+1})\in \R^n\times \R:\ x_{n+1}=0\right\}$. 

Before starting, let us recall the construction of the folds. Let $K\subset H$ be a billiard table and fix $p_0\in \partial K$. We choose an open bounded neighborhood $U\subset \R^{n+1}$ of $p_0$ and a function $f\in C^{2,1}(K\cap\overline{U})$ with a regular value at $0$ such that
$$K\cap U =\{(x,0)\in H: f(x)\ge 0\},\quad \partial K \cap U = f^{-1}(0).$$
Here, we construct the family of Riemannian hypersurfaces $\left\{\left(\M,g_\lambda\right)\right\}_{\lambda\in (0,1)}$, where
\begin{equation*}
\M = \left\{(x,x_{n+1})\in \R^{n+1}:\ x_{n+1}^2 = \lambda^2f(x),\ x\in K\cap U\right\}\subset \R^{n+1}
\end{equation*}
and $g_\lambda$ is the metric on $\M$ induced by $\overline{g}$.

\begin{theorem}

The collection $\{(\M,g_\lambda)\}_{\lambda\in(0,1)}$ is a family of folds over $p_0$ in the sense of Definition \ref{def:family_folds}.

\end{theorem}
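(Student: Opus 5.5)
We must verify each condition of Definition~\ref{def:family_folds} for the folds $\M=\{x_{n+1}^2=\lambda^2 f(x)\}$. Conditions 1 and 2 are immediate. $g_\lambda$ is induced by the Euclidean $\g$ by construction. If $(x,0)\in\M$, then $\lambda^2 f(x)=0$, so $x\in f^{-1}(0)=\partial K\cap U$.

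For condition 3, we work in doubled boundary-normal coordinates. After shrinking $U$, we choose a compact neighborhood $C_0$ of $p_0$ in $K$ contained in $K\cap U$, with $\nabla f\neq 0$ on $C_0$. We set $C=\pi_K^{-1}(C_0)\subset D(K)$ and define
$$\Phi_\lambda(x_\pm)=\bigl(x,\pm\lambda\sqrt{f(x)}\bigr).$$
This map is continuous, injective, and restricts to the identity on $\partial K\cap C$. Since $C$ is compact, it is a topological embedding onto a compact neighborhood of $p_0$ in $\M$. Continuity across $\partial K$ holds because $\sqrt f\to0$ there. The uniform convergence condition is the estimate
$$d_{\g}(\Phi_\lambda(q),\pi(q))=\lambda\sqrt{f(x)}\le\lambda\sup_{C_0}\sqrt f.$$
For the extension condition, we first observe that $\M$ is a smooth hypersurface away from $H$. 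Near $\partial K$ we write $f=r\,a(x)$ with $r$ the distance to $\partial K$ and $a>0$. Then $\M$ is the graph $r=x_{n+1}^2/(\lambda^2 a)$ over $(\partial K\text{-coordinates},x_{n+1})$, so it is a $C^{1,1}$ (indeed $C^2$ away from degenerate points) embedded hypersurface without boundary in the interior of $\Phi_\lambda(C)$. We then choose $T_0$ smaller than a fixed multiple of $\operatorname{dist}_{\g}(p_0,\partial_{\text{rel}}C_0)$. The intrinsic distance dominates the Euclidean one, so an arclength geodesic from $p_0$ cannot leave $\Phi_\lambda(C)$ before time $T_0$, uniformly in $\lambda$. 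Standard ODE extension then gives existence on $[-T_0,T_0]$.

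The main obstacle is the last bullet: uniform convergence of $d_{g_\lambda}(\Phi_\lambda q_1,\Phi_\lambda q_2)$ to $d_{D(K)}(q_1,q_2)$. I would prove two one-sided inequalities.

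\emph{Lower bound.} We have $d_{g_\lambda}\ge d_{D(K)}-o(1)$. Take a $g_\lambda$-minimizing curve $\sigma$ and consider $\Phi_\lambda^{-1}\circ\sigma$ in $D(K)$. Projecting onto the $x$-coordinates is a length-nonincreasing map. It sends a curve on one sheet to a curve in $K$ of no greater length, and a curve switching sheets must pass through $\partial K$. Hence the $D(K)$-length of $\Phi_\lambda^{-1}\circ\sigma$ is at most its $g_\lambda$-length. This uses only that the Euclidean length dominates the length of the horizontal projection. Up to edge effects near $\partial C$, we get $d_{D(K)}\le d_{g_\lambda}$ exactly. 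The edge effects are handled by shrinking $C$ and noting that minimizers between points of a smaller $C'$ stay in $C$.

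\emph{Upper bound.} Take a $D(K)$-minimizer $\tilde c$, approximated by a piecewise smooth curve $c$ in $K$ that crosses $\partial K$ transversally finitely many times. Its image $\Phi_\lambda\circ c$ has speed squared
$$|\dot x|^2+\lambda^2\,\frac{(\nabla f\cdot\dot x)^2}{4f}.$$
The correction term is integrable along a transversal crossing, since $f\sim|t-t_0|$ gives an integrand $\sim\lambda^2/|t-t_0|$ inside a square root, i.e.\ $\int\lambda|t-t_0|^{-1/2}dt<\infty$. The length therefore converges to $L(c)$ as $\lambda\to0$, with an error of order $\lambda$ bounded uniformly in terms of $\sup|\nabla f|$ and a lower bound on the transversality. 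To make this uniform in $(q_1,q_2)\in C\times C$, I would use compactness. Fix $\varepsilon>0$ and choose a finite $\delta$-net of $C$. For net points, pick near-minimizing curves with controlled transversality and apply the estimate above. For general pairs, connect each point to its nearest net point by a short curve whose $g_\lambda$-length is $O(\delta+\sqrt{\lambda\delta})$ uniformly. For points near $\partial K$, use the fact that the fold's vertical displacement is $\lambda\sqrt f\le\lambda\sqrt{C\delta}$. Combining the two inequalities yields the uniform convergence and completes the verification.
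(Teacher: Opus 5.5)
Your proposal is correct and follows essentially the paper's route. It uses the same map $\Phi_\lambda(x_\pm)=(x,\pm\lambda\sqrt{f(x)})$, the same unit-speed projection argument for $T_0$, and the same two-sided length comparison $L_K(\pi_K\circ\alpha)\le L_{g_\lambda}(\Phi_\lambda\circ\alpha)\le L_K(\pi_K\circ\alpha)+\lambda\operatorname{Var}(\sigma\circ\alpha)$; your transversal-crossing integrability computation bounds exactly that variation term, and you make it uniform with a finite net plus short local connectors as in \Cref{lem:coord_connectors}.

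One small fix concerns regularity. Obtain it from the implicit function theorem applied to $x_{n+1}^2-\lambda^2 f(x)$, whose gradient never vanishes on $M_\lambda$. That makes $M_\lambda$ a $C^{2,1}$ hypersurface, not merely the $C^{1,1}$ graph you describe, so the geodesic equation has Lipschitz coefficients and the ODE continuation you invoke is justified.
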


We will divide the proof into several steps. First, we notice that the first two items of \Cref{def:family_folds} are automatically satisfied for each $\M$, so it remains only to prove item 3.

Let $U^\prime\Subset U$ (i.e. $U^\prime$ is open and $\overline U^\prime \subset U$) and let $C = \pi_K^{-1}(K\cap \overline U^\prime)\subseteq D(K)$ be a compact neighborhood of $p_0$, where we recall that $\pi_K:D(K)\to K$ is the double projection. Let us introduce the function $\sigma:D(K)\to \R$ defined as follows.
If $q_+$ and $q_-$ are the two lifts in $D(K)$ of a point
$x\in \Int(K)$, we have
$$\sigma(q_+)=\sqrt{f(x)},\qquad
\sigma(q_-)=-\sqrt{f(x)},$$
while if $q_0\in \partial K\subset D(K)$, define $\sigma(q_0)=0$.
Notice that $\sigma$ is continuous on $D(K)$. Then we define the function $\Phi_\lambda:C\to M_\lambda$ as
$$\Phi_\lambda(q) = (\pi_K(q), \lambda \sigma(q)).$$
We claim that such $\Phi_\lambda$ satisfies the properties in item 3 of \Cref{def:family_folds}. 
Notice that $\Phi_\lambda$ is a homeomorphism from the topological manifold with boundary $C$ onto its image and, moreover, we observe that there is a compact neighborhood of $p_0$, $W\subset \R^{n+1}$, such that $\Phi_\lambda(C)\subseteq W$ for all $\lambda\in (0,1)$. Indeed, since $C$ is compact and $\pi_K$ and $\sigma(q)$ are continuous on $C$, there is $R>0$ such that
$$|\Phi_\lambda(q)|\le |\pi_K(q)|+\lambda|\sigma(q)|\le 2R.$$

\begin{proposition}
  We have that
  $$\lim\limits_{\lambda\to 0^+}\; |\Phi_\lambda(q)- \pi_K(q)| = 0$$
  uniformly in $C$. 
\end{proposition}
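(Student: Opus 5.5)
The plan is to compute the difference directly from the definition of $\Phi_\lambda$ and then bound it uniformly using compactness of $C$. By construction $\Phi_\lambda(q) = (\pi_K(q),\lambda\sigma(q))$. Identifying $K$ with its image in $H$, so that $\pi_K(q) = (\pi_K(q),0)\in\R^{n+1}$, the two points differ only in the last coordinate. Hence
$$|\Phi_\lambda(q)-\pi_K(q)| = \lambda\,|\sigma(q)| \qquad\text{for every } q\in C.$$

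The next step is to show that $\sigma$ is bounded on $C$. Set $S:=\sup_{q\in C}|\sigma(q)|$. This is finite for either of two reasons.
\begin{itemize}
\item $\sigma$ is continuous on $D(K)$, as noted right after its definition, and $C$ is compact.
\item Directly, $|\sigma(q)| = \sqrt{f(\pi_K(q))}$, and $f$ is continuous on the compact set $K\cap \overline{U}^\prime\subset K\cap\overline U$ because $f\in C^{2,1}(K\cap\overline U)$. So $S\le \bigl(\max_{K\cap\overline U^\prime} f\bigr)^{1/2}<\infty$.
\end{itemize}

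Combining the two steps gives
$$\sup_{q\in C}|\Phi_\lambda(q)-\pi_K(q)| \le \lambda S,$$
and the right-hand side tends to $0$ as $\lambda\to0^+$. This is exactly the claimed uniform convergence.

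None of these steps is a real obstacle. The only point needing care is the continuity of $\sigma$ across $\partial K$, which is what guarantees that $S$ is finite. Since $f\to 0$ on $\partial K$, we have $\pm\sqrt f\to 0=\sigma|_{\partial K}$ from both sheets, so $\sigma$ is continuous there. In any case, the bound via $\max f$ does not require continuity of $\sigma$ at all; it only uses boundedness of $f$ on the compact set $K\cap\overline U^\prime$.
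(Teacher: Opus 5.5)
Your proposal is correct and matches the paper's proof: the difference is $\lambda|\sigma(q)|$, bounded uniformly by $\lambda$ times the maximum of $|\sigma|$ over the compact set $C$. Your use of $\sup_C|\sigma|$ instead of the paper's $\max_C\sigma$ is slightly more careful, although the two coincide here because $C=\pi_K^{-1}(K\cap\overline{U}^\prime)$ is invariant under swapping the two sheets.
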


\begin{proof}
  We let $D:= \max\limits_{C}\sigma$ and notice that, for each $q\in C$,
  $$|\Phi_\lambda(q)-\pi_K(q)| = \left|\lambda\sigma(q)\right| \le \lambda\, D.$$
  In particular this implies $|\Phi_\lambda(q)-\pi_K(q)|\to 0$ as $\lambda\to 0^+$ uniformly in $C$.
\end{proof}

\begin{proposition}
  There is $T_0>0$ such that, for each $\lambda\in (0,1)$, every arclength-parameterized geodesic segment $\gamma$ in $\M$ with $\gamma(0)=p_0$ can be extended up to $[-T_0,T_0]$ and its image is contained in the interior of $\Phi_\lambda(C)$.
\end{proposition}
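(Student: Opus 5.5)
The plan is to exploit two facts. First, $\Phi_\lambda(C)$ contains a $g_\lambda$-metric ball around $p_0$ whose radius is bounded below uniformly in $\lambda$. Second, any unit-speed curve in $M_\lambda$ starting at $p_0$ stays within $g_\lambda$-distance $|t|$ of $p_0$. The key estimate is a uniform lower bound on the intrinsic distance from $p_0$ to the "boundary" of $\Phi_\lambda(C)$ inside $M_\lambda$.

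Since $M_\lambda\subset \R^{n+1}$ carries the induced Euclidean metric, we have $d_{g_\lambda}(q_1,q_2)\ge |q_1-q_2|$. The set $\Phi_\lambda(C)$ consists of the points $(x,x_{n+1})\in M_\lambda$ with $(x,0)\in K\cap\overline{U'}$. Its relative boundary in $M_\lambda$ is contained in the set of points with $x\in\partial U'$. Set $r:=\mathrm{dist}(p_0,\partial U')>0$, which is independent of $\lambda$. Any point of $M_\lambda$ lying outside $\Int_{M_\lambda}\Phi_\lambda(C)$ and connected to $p_0$ by a curve must have Euclidean distance at least $r$ from $p_0$, because its horizontal projection already does. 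Hence the intrinsic ball $B_{g_\lambda}(p_0,r)$ is contained in the interior of $\Phi_\lambda(C)$ for every $\lambda$. I would set $T_0:=r/2$.

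Next comes extendibility. Let $\gamma$ be a unit-speed $g_\lambda$-geodesic with $\gamma(0)=p_0$ defined on a maximal interval $(\alpha,\beta)$, and suppose $\beta\le T_0$. For $t<\beta$, the length bound gives $|\gamma(t)-p_0|\le t\le r/2$. So $\gamma$ stays in the compact set $\Phi_\lambda(C)\cap\{|y-p_0|\le r/2\}$, which is compact because $\Phi_\lambda$ is continuous and $C$ is compact. We also need $M_\lambda$ to be a smooth (at least $C^{2}$) hypersurface near this set, including at points over $\partial K$. This holds because $0$ is a regular value of $F_\lambda(x,x_{n+1})=x_{n+1}^2-\lambda^2f(x)$: wherever $x_{n+1}=0$ we have $f=0$, so $\nabla F_\lambda=(-\lambda^2\nabla f,0)\neq0$. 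Thus the geodesic equation has locally Lipschitz coefficients, since $f\in C^{2,1}$ makes the second fundamental form Lipschitz. A geodesic confined to a compact subset of a manifold without boundary (here $\Int_{M_\lambda}\Phi_\lambda(C)$, with its closure within distance $r/2$ still inside) has velocity bounded in the unit tangent bundle over a compact set, and so it extends past $\beta$. This contradicts maximality. The same argument handles negative times.

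The step I expect to be the main obstacle is the regularity and compactness bookkeeping at the fold, where $x_{n+1}=0$. The function $f$ is only $C^{2,1}$ on $K\cap\overline U$, and $M_\lambda$ is defined only over $K\cap U$. I would check that $M_\lambda$ is a $C^{2,1}$ hypersurface without boundary near $\partial K\cap U'$ by writing it locally as the graph $x_n=h(x',x_{n+1}/\lambda)$ via the implicit function theorem. This shows the fold is a genuine interior region of $M_\lambda$, not an edge, so geodesics there are governed by an ODE with unique solutions. Uniformity in $\lambda$ then comes entirely from the Euclidean comparison $d_{g_\lambda}\ge|\cdot|$, so no curvature control is needed.
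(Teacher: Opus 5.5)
Your proof is correct and is essentially the paper's argument. The paper also takes $T_0$ to be half the distance from $p_0$ to $H\setminus U'$, bounds the horizontal displacement $|P(\gamma(t))-p_0|$ by $t$ using unit speed, and concludes with the standard continuation argument for a geodesic confined to a compact subset of $\Int_{M_\lambda}\Phi_\lambda(C)$; your route through $|\gamma(t)-p_0|\le d_{g_\lambda}(p_0,\gamma(t))\le t$ is an equivalent way of getting that bound. Your additional check that $0$ is a regular value of $F_\lambda$, so that the fold is a $C^{2,1}$ hypersurface without boundary with Lipschitz Christoffel symbols, fills in a point the paper leaves implicit (to use the implicit function theorem, first extend $f$ across $\partial K$; this does not change the zero set locally, since the extension is negative just outside $K$).
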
 

\begin{proof}
Let $P:\mathbb R^{n+1}\to H$ be the Euclidean orthogonal projection,
$P(x,x_{n+1})=(x,0)$. Since $U'\Subset U$ and $p_0\in U'$, we set 
$$\delta:=\operatorname{dist}_{H}(p_0,H\setminus U')>0 \quad \mbox{and}\quad T_0 := \delta/2.$$
Fix $\lambda\in(0,1)$, and let $\gamma$ be a maximal
arclength-parameterized geodesic in $\M$ with $\gamma(0)=p_0$. We prove the statement for positive times; the negative-time argument is identical.

Let $[0,b)$ be the positive part of the maximal interval of definition
of $\gamma$. We claim first that $P(\gamma(t))\in U'$ for every
$t\in [0,\min(b,T_0))$. For the sake of contradiction, suppose that $t_*$ is the first time at which $P(\gamma(t_*))\in H\setminus U'$. Then, for every $t\in[0,t_*]$, we have $\gamma(t)\in \Phi_\lambda(C)$. Therefore, since $\gamma$ is parameterized by unit speed,
$$|(P\circ\gamma)'(t)|
\le |\gamma'(t)|
= 1.$$
It follows that
$$|P(\gamma(t_*))-p_0|
\le t_*
< T_0 =\frac{\delta}{2}.$$
But since $P(\gamma(t_*))\in H\setminus U'$, by the definition of $\delta$, we should have $|P(\gamma(t_*))-p_0|\ge \delta$, which is a contradiction. Hence $P(\gamma(t))\in U'$ for all
$t\in [0,\min(b,T_0))$. In fact, for every such $t$, the same estimate gives
$$|P(\gamma(t))-p_0|\le t\le T_0=\frac{\delta}{2}.$$
Thus \(\gamma(t)\) stays in a compact subset of
$\Int_{\M}(\Phi_\lambda(C))$ for $t\in [0,\min(b,T_0))$. To conclude, we show $b>T_0$. Indeed, if $b\le T_0$, $\gamma([0,b))$ would be contained in a compact subset of the Riemannian manifold $\M$ and the standard continuation argument for solutions of the geodesic equations would allow us to extend $\gamma$ beyond $b$, contradicting maximality.
\end{proof}
    
\begin{proposition}
  We have that
  $$\lim_{\lambda\to 0^+}\;\left|d_{g_\lambda}(\Phi_\lambda(q_1), \Phi_\lambda(q_2)) - d_{D(K)}(q_1,q_2)\right| = 0$$
  uniformly in $q_1,q_2\in C$.
\end{proposition}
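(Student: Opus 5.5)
We prove the two inequalities $\limsup_{\lambda\to0^+}\sup_{q_1,q_2\in C}\bigl(d_\lambda(q_1,q_2)-d_{D(K)}(q_1,q_2)\bigr)\le 0$ and $\liminf_{\lambda\to 0^+}\inf_{q_1,q_2\in C}\bigl(d_\lambda(q_1,q_2)-d_{D(K)}(q_1,q_2)\bigr)\ge 0$ separately. Here $d_\lambda(q_1,q_2)=d_{g_\lambda}(\Phi_\lambda(q_1),\Phi_\lambda(q_2))$. The natural tool is the length comparison of $\Phi_\lambda$ against the (Lipschitz) length structure of $D(K)$. In local terms, the map $\Phi_\lambda(x,\pm)=(x,\pm\lambda\sqrt{f(x)})$ stretches lengths by at most a factor coming from $|\nabla(\lambda\sqrt f)|=\lambda|\nabla f|/(2\sqrt f)$. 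This is large near $\partial K$, so the argument must treat a thin collar $\{f<\eta\}$ separately.

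\emph{Upper bound.} Fix $q_1,q_2\in C$ and take a $d_{D(K)}$-shortest path $\tilde c$ between them. By choosing $U'$ inside a region where $K\cap\overline{U}$ is locally convex enough, or by shrinking $C$, I assume this path stays in $C$. Its projection $c=\pi_K\circ\tilde c$ crosses $\partial K$ at most where $\tilde c$ changes sheet. I will replace $\tilde c$ by a path $\tilde c_\eta$ of $d_{D(K)}$-length at most $d_{D(K)}(q_1,q_2)+O(\eta)$ that crosses the collar $\{0<f<\eta\}$ only along segments transversal to $\partial K$, i.e.\ in the direction of $\nabla f$. Such crossings have bounded total number: a shortest path crosses $\partial K$ at most once between two sheets, and near-boundary tangential motion can be pushed into $\{f\ge\eta\}$ at cost $O(\eta)$.

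The image $\Phi_\lambda\circ\tilde c_\eta$ is then estimated piece by piece. On $\{f\ge\eta\}$ the speed distortion is at most $\sqrt{1+\lambda^2 L^2/(4\eta)}$, where $L=\sup|\nabla f|$. On a normal crossing, parametrize by $s=\sqrt f$; the fold is then a graph $x(s)$, $x_{n+1}=\lambda s$. Here $|dx/ds|=2s/|\nabla f|+O(s^2)$, so the length of the crossing is at most $\int_{-\sqrt\eta}^{\sqrt\eta}\sqrt{|dx/ds|^2+\lambda^2}\,ds\le \mathrm{Length}_{D(K)}+2\lambda\sqrt\eta$. Choosing $\eta=\eta(\lambda)\to0$ with $\lambda^2/\eta\to0$, e.g.\ $\eta=\lambda$, gives the uniform upper bound.

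\emph{Lower bound.} Consider the map $\Pi_\lambda:=\Phi_\lambda^{-1}$ on $\Phi_\lambda(C)$, i.e.\ forgetting $x_{n+1}$ but remembering its sign. For any curve $\beta$ in $M_\lambda$, the curve $\Pi_\lambda\circ\beta$ is continuous in $D(K)$, since the sign can only change at $x_{n+1}=0$, where $f=0$. Its $d_{D(K)}$-length is at most the Euclidean length of $P\circ\beta$, which is at most the length of $\beta$. Indeed, lengths in $D(K)$ of curves lying sheetwise in $K$ coincide with their Euclidean lengths in $H$, by the definition of $d_{D(K)}$ and because $g_K$ is Euclidean. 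Hence $d_{D(K)}(q_1,q_2)\le d_{g_\lambda}(\Phi_\lambda q_1,\Phi_\lambda q_2)$ exactly, provided the $g_\lambda$-shortest paths stay in $\Phi_\lambda(C)$.

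To guarantee the last condition, I would use the upper bound just proved together with $T_0$-type localization: shortest paths have length bounded by $\mathrm{diam}_{D(K)}C+o(1)$. After possibly shrinking $C$ relative to $U'$, their projections remain in $K\cap U$, where $M_\lambda$ is defined. Otherwise the path could be truncated at the exit point and the same projection argument still gives the bound.

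The main obstacle is the upper bound near $\partial K$. It requires constructing near-optimal $D(K)$-paths with controlled, essentially normal, crossings of the collar, uniformly in $q_1,q_2$. I would first try to avoid this by using boundary normal coordinates $(\theta,r)$ for $K$ near $\partial K$. In these coordinates $f\approx|\nabla f|r$, and it suffices to compare the metric $\Phi_\lambda^*\bar g=g_{D(K)}+\lambda^2 d(\sqrt f)^2$ with $g_{D(K)}+\lambda^2 C\,(dr^2/r)$. Any path can then be modified near each crossing time at $O(\eta)$ cost to be normal within $r<\eta$.
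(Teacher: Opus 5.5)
Your lower bound is the paper's argument: $P$ is $1$-Lipschitz and $L_{D(K)}(\alpha)=L_K(\pi_K\circ\alpha)$. Your version is in fact the more complete one, since you apply $\Phi_\lambda^{-1}$ to an arbitrary curve in $M_\lambda$, not only to curves of the form $\Phi_\lambda\circ\alpha$. The proviso that shortest paths stay in $\Phi_\lambda(C)$ is unnecessary. The formula for $\Phi_\lambda$ is a homeomorphism from $\pi_K^{-1}(K\cap U)$ onto all of $M_\lambda$, so every curve in $M_\lambda$ lifts. Truncating at an exit point would not give a curve joining the endpoints anyway.

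For the upper bound you take a genuinely different route. The paper fixes $\eta$ and uses the single estimate $L_{g_\lambda}(\Phi_\lambda\circ\alpha)\le L_K(\pi_K\circ\alpha)+\lambda\operatorname{Var}(\sigma\circ\alpha)$. It gets uniformity from compactness: a finite $r$-net, finitely many fixed near-minimizers $\gamma_{ij}$ of finite $\sigma$-variation, and short coordinate connectors of uniformly bounded $\sigma$-variation. This gives $d_\lambda\le d_{D(K)}+\eta+\lambda B_\eta$. That argument is soft, gives no rate, and never needs to know what shortest paths of $D(K)$ look like.

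You instead tie the collar width to $\lambda$ (e.g.\ $\eta=\lambda$). You use the multiplicative distortion $\sqrt{1+\lambda^2L^2/(4\eta)}$ off the collar, and the paper's variation estimate only on normal crossings, at cost $2\lambda\sqrt{\eta}$ each. Uniformity then comes from surgery on an arbitrary near-shortest path. This gives an explicit $O(\lambda)$ rate. The price is quantitative geometric input: a uniform collar on which $|\nabla f|\ge c>0$, and the reduction to at most one change of sheet. That reduction is correct, by the triangle inequality in $K$.

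The step you must make precise is the push-out. Your remark that $\pi_K\circ\tilde c$ meets $\partial K$ only where the sheet changes is false in general. Near-minimizers can touch or run along $\partial K$, e.g.\ in non-convex tables or when $q_1,q_2\in\partial K$. Such a path may enter the collar arbitrarily many, even infinitely many, times. So pushing each excursion out by normal detours costing $O(\eta)$ apiece gives no uniform bound.

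The fix is to push out by the normal projection $(\theta,r)\mapsto(\theta,\eta)$ onto $\{r=\eta\}$ in boundary normal coordinates, where $f\ge c\eta$. This map is $(1+O(\eta))$-Lipschitz, because the induced metrics $h_r$ vary in $r$ with bounded derivative (the shape operator of $\partial K$ is bounded). The cost is then $O(\eta)$ times the length, however many excursions there are. Afterwards the collar is met only by at most one full normal crossing and two partial ones at endpoints lying in it, and your estimate closes.

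Finally, both your argument and the paper's need near-minimizers between points of $C$ to stay over $K\cap U$, where $\Phi_\lambda$ is defined. Your explicit shrinking of $C$ (or of $U'$) states this openly. The paper assumes it tacitly when it takes $\gamma_{ij}\subset C_1$.
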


\begin{proof}
  We break the proof in two steps. First, we claim that for each fixed $q_1,q_2\in C$ and a Lipschitz regular curve $\alpha:[0,1]\to C$ such that $\alpha(0)=q_1$, $\alpha(1) = q_2$ and
  $$\mbox{Var}(\sigma\circ \alpha):=\int_0^1|(\sigma\circ \alpha)^\prime(t)|\, dt< \infty$$
  we have 
  $$L_{g_\lambda}(\Phi_\lambda\circ \alpha)\to L_K(\pi_K\circ \alpha).$$
  On the one hand, we obviously have $L_K(\pi_K\circ \alpha)\le L_{g_\lambda}(\Phi_\lambda\circ \alpha)$, since $L_K$ and $L_{g_\lambda}$ are both induced by the ambient Euclidean metric $\g$ and $H$ is totally geodesic in $\R^{n+1}$. On the other hand, we observe
  \begin{equation*}
    \begin{split}
       L_{g_\lambda}(\Phi_\lambda\circ \alpha) & = \int_0^1\sqrt{|(\pi_K\circ \alpha)^\prime(t)|^2 + \lambda^2 |(\sigma\circ \alpha)^\prime(t)|^2}\, dt \\
         & \le \int_0^1|(\pi_K\circ \alpha)^\prime(t)|\, dt + \lambda\int_0^1 |(\sigma\circ \alpha)^\prime(t)|\, dt \\
         & = L_K(\pi_K\circ \alpha) + \lambda\mbox{Var}(\sigma\circ \alpha),
    \end{split}
  \end{equation*}
  which converges to $L_K(\pi_K\circ \alpha)$ as $\lambda\to 0^+$.
  
  Next, we fix $\eta>0$ and we claim that there is $B_\eta>0$ such that for each $q_1,q_2\in C$ there is a Lipschitz regular curve $\alpha:[0,1]\to C$ such that $\alpha(0)=q_1$, $\alpha(1)=q_2$ and
  $$L_K(\pi_K\circ \alpha)\le d_{D(K)}(q_1,q_2)+\eta \quad \mbox{and}\quad \mbox{Var}(\sigma\circ \alpha)<B_\eta.$$
  The existence of an $d_{D(K)}$-almost minimizing curve is guaranteed by the fact that the length structure associated to the distance function $d_{D(K)}$ is precisely $L_{D(K)}(\alpha) = L_K(\pi_K\circ \alpha)$ (see section 2.2), so it remains to prove that such an almost minimizing curve can be chosen to have bounded variation in the $\sigma$ coordinate for all possible choices of $q_1,q_2\in C$. We postpone its construction to \Cref{lem:coord_connectors} below.

  Provided that such curves exist, by taking the infimum over those and letting $\eta\to 0$ and $\lambda\to 0^+$ we obtain
  $$\lim_{\lambda\to 0^+}\; \sup_{q_1,q_2\in C}|d_{g_\lambda}(\Phi_\lambda(q_1),\Phi_\lambda(q_2))- d_{D(K)}(q_1,q_2)| = 0.$$ 
\end{proof}

\begin{lemma}\label{lem:coord_connectors}
In the context of the proof of the above proposition, for every $\eta>0$ there exists
$B_\eta<\infty$ depending only on $\eta$ and $C$ such that, for every $q_1,q_2\in C$, there is a Lipschitz curve $\alpha:[0,1]\to C$ joining $q_1$ to $q_2$ and satisfying
$$L_{D(K)}(\alpha)\le d_{D(K)}(q_1,q_2)+\eta,\qquad
\operatorname{Var}(\sigma\circ\alpha)\le B_\eta.$$
\end{lemma}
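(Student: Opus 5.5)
The strategy is to first cut an almost minimizing curve into a number of short pieces that is bounded uniformly in $q_1,q_2$ and depends only on $\eta$ and $C$. Each piece is then replaced by an explicit "chart segment" whose length is almost its $d_{D(K)}$-distance and along which $\sigma$ has controlled variation. Adding up, $\operatorname{Var}(\sigma\circ\alpha)$ is bounded by (number of pieces)$\times$(uniform bound per piece).

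\emph{Step 1 (charts).} Cover $C$ by finitely many coordinate charts of $D(K)$, and shrink them so that each is geodesically convex enough for the argument below. There are two kinds.
\begin{itemize}
\item \emph{Interior charts} have closure in $\Int(K_\pm)$. On them $f\ge c>0$, so $\sigma=\pm\sqrt f$ is Lipschitz with a constant $L_1$.
\item \emph{Collar charts} use doubled boundary normal coordinates $(\theta,s)$ with $|s|<\varepsilon$, in which $g_{D(K)}=ds^2+h_{|s|}$.
\end{itemize}
In a collar chart, write $f=r\,h(\theta,r)$ with $r=|s|$. Since $f\in C^{2,1}$ vanishes on $\partial K$ with $\partial_r f>0$ there, $h$ is Lipschitz and $h\ge c>0$ after shrinking. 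Hence $\sigma=\operatorname{sign}(s)\sqrt{|s|}\sqrt{h(\theta,|s|)}$.

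Because the coefficients of $g_{D(K)}$ are Lipschitz, hence uniformly continuous, there is $\delta>0$ with the following property. Any two points at $d_{D(K)}$-distance $<\delta$ lie in a common chart. Moreover, in that chart the coordinate straight segment $\beta$ joining them satisfies
$$L_{D(K)}(\beta)\le (1+\eta')\,d_{D(K)}(\cdot,\cdot),$$
where $\eta'$ is chosen later. This comparison of the coordinate segment with the true distance uses only that the metric is $C^0$-close to a constant one on small balls.

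\emph{Step 2 (variation per piece).} On an interior chart, $\operatorname{Var}(\sigma\circ\beta)\le L_1\,L(\beta)$. On a collar chart, $s$ is monotone (affine) along $\beta$, so $\sqrt{|s|}$ has at most two monotone pieces and total variation at most $2\sqrt{\varepsilon}$. Applying the product rule for variations to $\operatorname{sign}(s)\sqrt{|s|}\cdot\sqrt{h}$, and using that $\sqrt h$ is bounded above and Lipschitz since $h\ge c$, gives
$$\operatorname{Var}(\sigma\circ\beta)\le 2\sqrt{\varepsilon}\,\sup\sqrt h+\sqrt{\varepsilon}\,\mathrm{Lip}(\sqrt h)\,L(\beta)+\text{(jump at }s=0\text{, which is }0).$$
Both bounds are uniform constants $A$ once $L(\beta)\le\delta$.

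\emph{Step 3 (assembly).} Fix $q_1,q_2\in C$ and take a curve $\alpha_0$ with $L_{D(K)}(\alpha_0)\le d_{D(K)}(q_1,q_2)+\eta/2$, using that $d_{D(K)}$ is a length metric as recalled in section 2.2. Subdivide $\alpha_0$ at points $x_0=q_1,\dots,x_N=q_2$ with consecutive distances $<\delta$. We can take $N\le (\operatorname{diam}C+\eta)/(\delta/2)+1=:N_\eta$, independent of $q_1,q_2$. Replace each arc by the chart segment $\beta_i$ from Step 1 and let $\alpha$ be the concatenation. Then
$$L(\alpha)\le(1+\eta')\sum_i d(x_{i-1},x_i)\le(1+\eta')(d(q_1,q_2)+\eta/2)\le d(q_1,q_2)+\eta,$$
once $\eta'$ is chosen small in terms of $\operatorname{diam}C$. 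Also $\operatorname{Var}(\sigma\circ\alpha)\le N_\eta A=:B_\eta$.

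The main obstacle is staying inside $C$. The segments should remain in $C$, which holds if $C=\pi_K^{-1}(K\cap\overline{U'})$ is enlarged slightly to a compact $C'\subset\pi_K^{-1}(K\cap U)$ on which all the above constants are taken; this costs nothing in the Proposition. The other obstacle is the uniformity of $\delta$ in Step 1, which I expect to follow from compactness via a Lebesgue-number argument.
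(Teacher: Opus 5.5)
Your argument is correct, and it takes a genuinely different route from the paper's.

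\textbf{The paper's route.} The paper fixes a finite $r$-net $\{a_1,\dots,a_N\}$ of $C$ with $r<\eta/8$. For each pair of net points it chooses once and for all an $\eta/2$-almost-minimizing curve $\gamma_{ij}$ with finite $\sigma$-variation. It then routes $q_1\to a_i\to a_j\to q_2$, using two short connectors that are straight in the topological coordinates $(\theta,\sigma)$, where $\sigma$ is affine along connectors. This gives the bound $2A+\max_{i,j}\operatorname{Var}(\sigma\circ\gamma_{ij})$, and uniformity comes only from the net being finite. Because those connectors need only be short rather than nearly minimizing, the paper can work in the degenerate $(\theta,\sigma)$ chart. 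However, it simply asserts that almost-minimizing $\gamma_{ij}$ with finite $\sigma$-variation exist, and this is not automatic. A Lipschitz curve oscillating across $\partial K$ with amplitudes $a_k$, where $\sum a_k<\infty$ but $\sum\sqrt{a_k}=\infty$, has finite length and infinite $\sigma$-variation.

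\textbf{Your route.} You work in the non-degenerate doubled normal coordinates $(\theta,s)$. You show that coordinate segments are $(1+\eta')$-almost minimizing at a uniform scale $\delta$, and you bound $\operatorname{Var}(\sigma\circ\beta)$ explicitly through $\sigma=\operatorname{sign}(s)\sqrt{|s|}\,\sqrt{h}$. Uniformity then comes from the bounded number $N_\eta$ of pieces. This removes the need for the net, and it also supplies exactly the ingredient the paper leaves unjustified: your construction, applied to a single pair, produces the paper's $\gamma_{ij}$. The price is the uniform comparison $L(\beta)\le(1+\eta')\,d_{D(K)}$, which is standard for a continuous metric via the Lebesgue-number argument you indicate.

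\textbf{Minor point in Step 2.} There is no jump at $s=0$, since $\sigma$ is continuous. Moreover, $\operatorname{sign}(s)\sqrt{|s|}$ is monotone along an affine $s$, so that factor has variation at most $2\sqrt{\varepsilon}$ directly.

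\textbf{Containment in $C$.} The paper's proof does not keep $\alpha$ inside $C$ either: its $\gamma_{ij}$ and connectors lie in $C_1$. So this is a defect of the statement, not of your approach. Still, enlarging $C$ slightly is not enough by itself. An almost-minimizing curve between points of $C$ can travel a distance comparable to $\operatorname{diam} C$, so what you actually need is that $U'$ be small relative to $U$. That guarantees such curves stay in $\pi_K^{-1}(K\cap U)$, where $\Phi_\lambda$ is defined, and it can always be arranged because the construction is local.
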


\begin{proof}
Fix $\eta>0$ and let $C_1\subset D(K)$ be compact and satisfy $C\subset C_1$, $C\cap \partial C_1 = \varnothing$. Since $C$ is compact, we may cover $C$ by finitely many coordinate charts $V_1,\ldots,V_m\Subset C_1$ with the following
property: if two points $q_1,q_2\in C$ are sufficiently close and lie in one of these $V_j$'s, then they can be connected inside that
neighborhood by a coordinate connector $\beta$ (i.e. a curve that, in coordinates, is a straight path) satisfying
$$L_{D(K)}(\beta)<\frac{\eta}{8},
\qquad
\operatorname{Var}(\sigma\circ\beta)\le A,$$
where $A<\infty$ is independent of $q_1,q_2$.
If $V_j$ is a coordinate chart away from $\partial K$, the property follows from the fact that $\sigma$ is smooth in $V_j$. Around some $q_0\in \partial K$, we use local topological coordinates $(\theta,\sigma)$ (where $\theta$ is a smooth coordinate for $\partial K$ around $q_0$ and $\sigma$ is a continuous but not differentiable coordinate) to construct local connectors of the form
$$(\theta_1+ t(\theta_2-\theta_1),\sigma_1+t(\sigma_2-\sigma_1)),$$
which clearly have finite $\sigma$-variation. We can also take the coordinate charts small enough so that the length of those coordinate connectors is less than $\eta/8$.

Now, we choose $r<\eta/8$ small enough so that every two points of $C$ with $d_{D(K)}$-distance less than $r$ can be joined by one of the above local connectors. Also, since $C$ is compact, we consider $\{a_1,\ldots,a_N\}\subset C$ such that for each $q\in C$ there is $j\in \{1,\ldots,N\}$ such that $d_{D(K)}(q,a_j)<r$. For each pair $a_i,a_j$, fix a Lipschitz curve $\gamma_{ij}:[0,1]\to C_1$ joining $a_i$ to $a_j$ such that
$$L_{D(K)}(\gamma_{ij})
\le d_{D(K)}(a_i,a_j)+\frac{\eta}{2},
\qquad
\operatorname{Var}(\sigma\circ\gamma_{ij})<\infty.$$
Since there are only finitely many pairs, we define $B_0:=\max\limits_{1\le i,j\le N} \operatorname{Var}(\sigma\circ\gamma_{ij})<\infty$.

Now let $q_1,q_2\in C$ and choose $a_i,a_j$ such that $d_{D(K)}(q_1,a_i)$, $d_{D(K)}(q_2,a_j)<r$. Let $\beta_1$ be a local connector from $q_1$ to $a_i$, and let $\beta_2$ be a local connector from $a_j$ to $q_2$. Define $\alpha$ as the concatenation of $\beta_1$, $\gamma_{ij}$ and $\beta_2$ in this order. We estimate
$$\operatorname{Var}(\sigma\circ\alpha) \le \operatorname{Var}(\sigma\circ\beta_1) + \operatorname{Var}(\sigma\circ\gamma_{ij}) + \operatorname{Var}(\sigma\circ\beta_2) \le 2A+B_0.$$
Thus we set $B_\eta:=2A+B_0$. 
To conclude that $\alpha$ is one of the curves we were looking for, we observe
\begin{equation*}
\begin{split}
L_{D(K)}(\alpha) &\le L_{D(K)}(\beta_1) + L_{D(K)}(\gamma_{ij}) + L_{D(K)}(\beta_2)\\
&\le \frac{\eta}{8} + d_{D(K)}(a_i,a_j) + \frac{\eta}{2} + \frac{\eta}{8}\\
&\le d_{D(K)}(q_1,q_2) +\frac{\eta}{4}+\frac{3\eta}{4} = d_{D(K)}(q_1,q_2) +\eta.
\end{split}
\end{equation*}
\end{proof}

\section{Billiards approximating geodesics}

The goal of this appendix is to make this paper self-contained and sketch the proof of \Cref{thm:lange_thm} which states that geodesics on the boundary of a convex billiard table are approximated by internal billiard trajectories. The details of the proof are contained in Sections 4 and 5 of \cite{Lange}.

Let $K\subset \R^n$ be the closure of an open connected subset of the Euclidean space. We say that $K$ is a \textit{convex billiard table in $\R^n$} if its boundary $\partial K$ is of class $C^{2,1}$ and for every $p_0\in \partial K$ the scalar second fundamental form at $p_0$ (associated to the inward pointing unit normal field to $\partial K$) is positive definite. 

\begin{remark}

Notice that every convex billiard table $K$ in $\R^n$ is a space of non-negative curvature according to \Cref{def:space_curvature_below}.

\end{remark}

Here, we recall several lemmas from \cite{Lange} which will play a crucial role in the proof of \Cref{thm:lange_thm}.

\begin{lemma}[= Lemma 5.5 of \cite{Lange}]\label{lemma55}

Let $K$ be a convex billiard table in $\R^n$. Then every 0-quasigeodesic of $K$ which is contained in $\partial K$ is also a 0-quasigeodesic of $\partial K$ (with respect to its intrinsic metric).

\end{lemma}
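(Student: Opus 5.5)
To prove the statement, I will work directly with the definition of a $0$-quasigeodesic and transfer the concavity inequality from points of $K$ to points of $\partial K$. Let $c:[a,b]\to\partial K$ be a $0$-quasigeodesic of $K$. By \Cref{prop:geodesic_quasigeodesic}, $c$ is a Lipschitz regular curve parameterized by arclength for $d_K$. First I check that it is also arclength-parameterized for the intrinsic distance $d_{\partial K}$. Since $\partial K$ is a $C^{2,1}$ hypersurface, $d_{\partial K}(x,y)=d_K(x,y)+O(d_K(x,y)^2)$ locally uniformly. Hence speeds computed with the two metrics agree, and the one-sided tangent vectors $c^\pm(t)$ lie in $T_{c(t)}\partial K$, because $c$ stays in $\partial K$.

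Next I need to show that for each $p\in\partial K$ with $d_{\partial K}(p,c(t))>0$, the function $f_p(t)=\tfrac12 d_{\partial K}(p,c(t))^2$ satisfies $f_p''\le 1$ in the barrier sense. Note that $\partial K$ has nonnegative curvature by the Gauss equation, so the correct intrinsic comparison constant is $\kappa=0$. My plan is a local-to-global route.

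\textbf{Local part.} Quasigeodesics of an Alexandrov space are characterized locally, and $\partial K$ with its intrinsic metric is a smooth Riemannian manifold of nonnegative curvature without boundary. By \Cref{prop:geodesic_quasigeodesic}, it is therefore enough to prove that $c$ is an intrinsic geodesic of $\partial K$ near each time $t_0$. I would show this by proving that the covariant acceleration of $c$ tangent to $\partial K$ vanishes. Fix $t_0$ and $v=c^+(t_0)\in T\partial K$. I compare $c$ with the extrinsic distance functions from points $p\in\Int(K)$ close to $c(t_0)$ along directions $w\in T_{c(t_0)}\partial K$, pushed slightly inward along $\nu$. Expanding $\tfrac12|p-c(t)|^2$ to second order, the inequality $f_p''\le1$ in the barrier sense bounds the component of the (generalized) acceleration of $c$ along $-(p-c(t_0))$. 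Letting $p$ vary over a small sphere of tangent and inward directions, these bounds force the Euclidean acceleration $c''$, taken in the barrier/distributional sense, to be a nonnegative multiple of $\nu$. A curve inside $\partial K$ whose Euclidean acceleration is normal is exactly a geodesic of $\partial K$. This also gives uniqueness of the left and right derivatives in $T\partial K$, i.e. $c^+=-c^-$, since polarity plus tangency forces opposite vectors.

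\textbf{Main obstacle.} The difficulty is the low regularity: $c$ is only Lipschitz, so the second-order expansion must be made rigorous. I would first prove that $c$ is $C^{1}$ with Lipschitz derivative, by using semiconcavity of $t\mapsto\tfrac12|p-c(t)|^2$ for a spanning family of points $p$. This makes each coordinate of $c$ differ from a concave function by a smooth one, so $c''$ exists as a measure. Then the inequalities above hold as measure inequalities. The tangential part of $c''$ must vanish because the test points $w$ and $-w$ give opposite signs. Finally, the $C^{2,1}$ regularity of $\partial K$ lets me write the intrinsic geodesic equation $\nabla_{c'}c'=(c'')^{\top}=0$ and bootstrap to smoothness. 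I expect this regularity step to be where the argument most likely needs care.
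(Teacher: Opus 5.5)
The paper gives no proof of this lemma; it is quoted from \cite{Lange}, so your argument is an independent route. The idea is sound and fairly elementary. Since $K$ is convex, $d_K$ is the Euclidean distance. Every $p\in\Int(K)$ is an admissible test point on all of $[a,b]$, because $c\subset\partial K$ never meets it. For such $p$ the condition $f_p''\le 1$ is formally $\langle p-c(t),c''\rangle\ge 0$, which forces $c''$ to point along the inward normal and makes $c$ a geodesic of $\partial K$. This replaces Alexandrov-geometric input by a direct computation with extrinsic distance functions.

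However, your regularity step is wrong as written. Semiconcavity of $t\mapsto\frac12|p-c(t)|^2$ for all $p\in K$ cannot give $c\in C^{1,1}$. Billiard trajectories of $K$, e.g.\ the curve of \Cref{ex:billiard_example}, are $0$-quasigeodesics of $K$ with corners and satisfy exactly the same semiconcavity. Nor is a coordinate ``concave plus smooth'': up to a constant, $\langle p'-p,c(t)\rangle$ equals $(f_p-\tfrac{t^2}{2})-(f_{p'}-\tfrac{t^2}{2})$, a difference of concave functions.

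What you legitimately get, using $n$ independent differences $p'-p$ with $p,p'\in\Int(K)$, is that $c'$ has bounded variation. So $\mu:=c''$ is a finite $\R^n$-valued measure, and that is all you need at this stage:
\begin{itemize}
\item The product rule gives $f_p''=dt+\langle c-p,\mu\rangle$, hence $\langle p-c(t),\mu\rangle\ge 0$ for every $p\in\Int(K)$.
\item A polar decomposition $\mu=\theta|\mu|$, tested on a countable dense set of $p$'s, yields $\mu=\nu(c(t))\,|\mu|$.
\item The regularity must then come from the constraint $c\subset\partial K$, not from semiconcavity. An atom $\mu(\{t_0\})=c^+(t_0)+c^-(t_0)$ is both tangent and normal, hence zero (this is your polarity-plus-tangency remark). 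So $c\in C^1$.
\item Differentiating $\langle c',\nu\circ c\rangle\equiv 0$, with $\nu\circ c$ Lipschitz, gives $|\mu|=\mathrm{II}(c',c')\,dt$. That is, $c''=\mathrm{II}(c',c')\,\nu$, which is the geodesic equation of $\partial K$.
\end{itemize}
So the order must be: measure first, constraint second, and $C^{1,1}$ (in fact $C^2$) only at the end.

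Finally, \Cref{prop:geodesic_quasigeodesic} is stated for smooth metrics, while $\partial K$ is only $C^{2,1}$, so its induced metric is only $C^{1,1}$. Also, $\partial K$ is not smooth, as your proposal asserts. For the last implication, ``geodesic of $\partial K$ $\Rightarrow$ $0$-quasigeodesic of $\partial K$'', argue instead as follows:
\begin{itemize}
\item $\partial K$, as the boundary of a convex body, is an Alexandrov space of curvature $\ge 0$ in its intrinsic metric.
\item Geodesics of a $C^{1,1}$ metric are locally minimizing.
\item Local shortest paths satisfy the barrier inequality $f_p''\le 1$, and this inequality is local in $t$.
\end{itemize}
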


\begin{lemma}[= Lemma 5.3 of \cite{Lange}]\label{lemma53}

Let $K$ be a convex billiard table in $\R^n$. If the initial directions of a sequence of billiard trajectories converge to a tangent vector in the boundary, then the sequence of billiard trajectories converges to a continuous curve of the boundary uniformly on every compact set of times.

\end{lemma}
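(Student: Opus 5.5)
Lemma 5.3 of \cite{Lange} is essentially a compactness statement, so the plan is an Arzelà--Ascoli argument for the trajectories themselves, followed by a verification that any limit curve lies in $\partial K$. Let $\{\gamma_k\}$ be billiard trajectories with $\gamma_k(0)=p_k\in\partial K$, $p_k\to p_0$, and initial unit directions $v_k\in C_{p_k}(K)$ converging to $v\in T_{p_0}(\partial K)$. Each $\gamma_k$ is parameterized by arclength, hence $1$-Lipschitz. On any compact time interval $[-T,T]$ the images therefore stay in a bounded set around $p_0$. Arzelà--Ascoli then gives, after a diagonal argument over $T\in\N$, a subsequence converging locally uniformly to a $1$-Lipschitz curve $\gamma:\R\to K$. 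It remains to show that $\gamma(t)\in\partial K$ for all $t$. Once any limit is known to lie in the boundary, one can pass to further subsequences to get the stated convergence, with uniqueness of the limit supplied later by \Cref{lemma55} and uniqueness of geodesics of $\partial K$.

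To show the limit stays on $\partial K$, I would first use the quasigeodesic picture. A convex billiard table is a space of nonnegative curvature, so by \Cref{prop:quasigeodesic_polar} each $\gamma_k$ is a $0$-quasigeodesic of $K$: it is a concatenation of straight segments glued with polar tangent vectors. By \Cref{prop:quasigeodesic_limit}, applied with the fixed metric $d_k=d_K$, the limit $\gamma$ is a $0$-quasigeodesic of $K$ with $\gamma^+(0)=v$; the right derivative passes to the limit by the semiconcavity estimates for quasigeodesics. Next I would quantify closeness to the boundary. Let $\alpha_k$ be the angle between $v_k$ and $T_{p_k}\partial K$, so $\alpha_k\to0$. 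By uniform positive definiteness of the second fundamental form (with bounds $0<\kappa_{\min}\le \mbox{II}\le\kappa_{\max}$ on a compact neighborhood), a chord starting at angle $\alpha$ has length comparable to $\alpha/\kappa$, and the reflection law preserves the angle up to an error of order $\alpha\cdot(\text{chord length})$, using the $C^{2,1}$ bound. Moreover, every point of a chord with angle $\alpha$ lies within distance $O(\alpha^2)$ of $\partial K$. A Gronwall-type bound on the angles over the roughly $T\kappa/\alpha_k$ bounces in time $T$ then shows that the angles stay $O(\alpha_k)$ on $[-T,T]$. Hence $\operatorname{dist}(\gamma_k(t),\partial K)=O(\alpha_k^2)\to0$, and so $\gamma(t)\in\partial K$.

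The main obstacle is this uniform control of the reflection angles over many bounces. The number of bounces blows up like $1/\alpha_k$, so the per-bounce angle error must be $O(\alpha_k^2)$ for the total drift to stay bounded. This is exactly where the Lipschitz bound on second derivatives (rather than mere $C^2$) is needed: the chord endpoints' normals differ by a second-order Taylor expansion whose remainder is controlled by the Lipschitz constant. I would first try the Birkhoff-type invariant argument, comparing the angle change with the change in the normal curvature along the chord and summing via a discrete integration-by-parts along the trajectory.
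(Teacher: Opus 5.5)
The paper gives no proof of this lemma. It is quoted from \cite{Lange} and used as a black box in the proof of \Cref{thm:lange_thm}, so your argument would replace the citation rather than compete with an in-paper proof. As a self-contained proof it is sound. It is a direct, Halpern-type control of the glancing angle. That buys a quantitative rate and shows why $C^{2,1}$ is the right hypothesis; the quasigeodesic machinery is needed only afterwards, to identify the limit.

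The step you flag as the main obstacle closes with a plain third-order Taylor expansion. No Birkhoff-type invariant or summation by parts is needed on a finite time interval. Write $\partial K$ near a bounce point $x$ as $w\mapsto x+w+h(w)\nu(x)$, with $h(0)=0$, $\nabla h(0)=0$ and $D^2h$ $L$-Lipschitz. Let $y=x+w+h(w)\nu(x)$ be the next bounce point, and let $\alpha,\beta$ be the angles of the chord with $T_x\partial K$ and $T_y\partial K$. Then
\[
|y-x|\,(\sin\beta-\sin\alpha)=\frac{\langle w,\nabla h(w)\rangle-h(w)}{\sqrt{1+|\nabla h(w)|^2}}-h(w)=O\bigl(L|w|^3+|w|^4\bigr),
\]
since $\langle w,\nabla h(w)\rangle-2h(w)=O(L|w|^3)$.

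Denote the bounce angles along $\gamma_k$ by $\theta_0=\alpha_k,\theta_1,\dots$ and the chord lengths by $\ell_j$. The display gives $|\sin\theta_{j+1}-\sin\theta_j|\le C\ell_j^2$. The lower bound on $\mathrm{II}$ gives $\ell_j\le C'\sin\theta_j$. Hence $\sin\theta_{j+1}\le(1+C''\ell_j)\sin\theta_j$, and since $\sum_j\ell_j\le T$, every bounce in $[0,T]$ has $\sin\theta_j\le e^{C''T}\sin\alpha_k$. The bounce count never enters, and you get $\operatorname{dist}(\gamma_k(t),\partial K)\le C'e^{C''T}\sin\alpha_k$.

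Run this as an explicit induction. It is the smallness of the current angle that places the next bounce point inside the graph chart where these constants hold. If $K$ is unbounded, take the constants over the compact set reachable in time $T$.

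Two minor points remain.
\begin{itemize}
\item Arzel\`a--Ascoli only gives subsequential limits. Convergence of the whole sequence, as the statement is phrased, needs the identification of the limit carried out in the proof of \Cref{thm:lange_thm} (via \Cref{lemma55}).
\item Your claim $\gamma^+(0)=v$ needs an argument. For $q\in K$ and $t\ge 0$, the barrier inequality gives $\tfrac12|\gamma_k(t)-q|^2\le\tfrac12|p_k-q|^2+t\langle v_k,p_k-q\rangle+\tfrac{t^2}{2}$. In the limit this yields $\langle\gamma^+(0)-v,\,q-p_0\rangle\ge 0$ for all $q\in K$. Hence $\gamma^+(0)=v+s\,\nu(p_0)$ with $s\ge 0$, and $|\gamma^+(0)|\le 1$ forces $s=0$.
\end{itemize}
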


\begin{theorem}[= \Cref{thm:lange_thm}]

Let $K$ be a convex body in $\R^n$ whose boundary is of class $C^{2,1}$ and has a positive definite second fundamental form at every point. Let $p_0\in \partial K$ and $\{\gamma_k\}_{k\in \N}$ be a sequence of billiard trajectories of $K$ starting from $p_0$. Then, if the initial direction of $\gamma_k$ converges to a tangent vector $v\in T_{p_0}(\partial K)$, we have that the sequence $\{\gamma_k\}_{k\in \N}$ converges locally uniformly to the corresponding geodesic of the boundary.

\end{theorem}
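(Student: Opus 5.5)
We follow the strategy of \cite{Lange}, combining the two lemmas just recalled with the uniqueness of geodesics on the $C^{2,1}$ hypersurface $\partial K$.

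\emph{Step 1: compactness.} By \Cref{lemma53}, since the initial directions of $\gamma_k$ converge to $v\in T_{p_0}(\partial K)$, every subsequence of $\{\gamma_k\}$ has a further subsequence converging locally uniformly to a continuous curve $c:[0,\infty)\to\partial K$ with $c(0)=p_0$. The reason is that a trajectory leaving $p_0$ at incidence angle $\theta_k\to 0$ bounces with angle preserved along the flow. The chord lengths are then $O(\theta_k)$, since positive definiteness of the second fundamental form gives a uniform lower bound on the normal curvature. Hence the trajectories stay within $O(\theta_k^2)$ of $\partial K$ on compact time intervals. We take this lemma as given.

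\emph{Step 2: the limit is a quasigeodesic of $K$.} Each billiard trajectory $\gamma_k$ is a $0$-quasigeodesic of $K$. Indeed, $K$ is nonnegatively curved in the sense of \Cref{def:space_curvature_below} by the remark preceding \Cref{lemma55}. Each straight segment is a geodesic, hence a $0$-quasigeodesic, and at each bounce the tangent vectors are polar by the reflection law. The gluing part of \Cref{prop:quasigeodesic_polar} then shows the concatenation is a $0$-quasigeodesic; since bounces are finite on compact intervals by \cite{Halpern}, we may iterate. Applying \Cref{prop:quasigeodesic_limit} with the constant sequence $d_k=d_{g_K}$ on a compact neighborhood, the limit $c$ is a $0$-quasigeodesic of $K$. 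It is contained in $\partial K$, so by \Cref{lemma55} it is a $0$-quasigeodesic of $\partial K$ with its intrinsic metric.

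\emph{Step 3: identification and conclusion.} The hypersurface $\partial K$ has no boundary and is positively curved by the Gauss equation. By \Cref{prop:geodesic_quasigeodesic}, a quasigeodesic of $\partial K$ is a genuine arclength geodesic. Strictly speaking, that proposition is stated for smooth manifolds, whereas here the metric is only $C^{1,1}$; we would either invoke the Alexandrov-space version from \cite{PetruninSemiconcave} or argue directly that local geodesics on a $C^{2,1}$ hypersurface solve an ODE with Lipschitz coefficients.

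Next we check the initial direction. The right tangent vector $c^+(0)$ must equal $v$. We would obtain this from the convergence of the initial directions together with the uniform $C^1$-closeness of the trajectories to the boundary furnished by the estimates behind \Cref{lemma53}. Alternatively, we can use the quasigeodesic comparison: the first variation of distance functions passes to the limit.

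Geodesics of $\partial K$ with Lipschitz Christoffel symbols are uniquely determined by their initial data, by Picard--Lindelöf. Hence every subsequential limit equals the boundary geodesic with initial data $(p_0,v)$. A standard subsequence argument then upgrades this to locally uniform convergence of the whole sequence.

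The main obstacle is Step 2's use of \Cref{lemma55}, namely the passage from being a quasigeodesic of the ambient $K$ to being one intrinsically in $\partial K$. This is where strict convexity is essential. The idea is to compare, for $p\in\partial K$, the intrinsic distance with the extrinsic one and control the error to second order using the second fundamental form. Together with the initial-direction identification in Step 3, this is the delicate part; we would rely on \cite{Lange} for both.
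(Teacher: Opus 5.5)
Your proposal is correct and follows the same chain as the paper's sketch: \Cref{lemma53} gives convergence to a curve in $\partial K$, \Cref{prop:quasigeodesic_polar} and \Cref{prop:quasigeodesic_limit} make the limit a $0$-quasigeodesic of $K$, \Cref{lemma55} passes this to $\partial K$, and \Cref{prop:geodesic_quasigeodesic} plus uniqueness of the boundary geodesic with initial direction $v$ finishes. Your extra care about $c^+(0)=v$ (which the paper simply asserts), about the $C^{1,1}$ regularity of $\partial K$, and about the subsequence upgrade only sharpens the argument, with two small caveats: for the direction, semiconcavity passes to the limit only the inequality $\langle q-p_0,\,c^+(0)-v\rangle\ge 0$ for all $q\in K$, which together with $c^+(0)\in T_{p_0}(\partial K)$ forces $c^+(0)=v$; and your heuristic for \Cref{lemma53} is loose, since the bounce angle is not exactly preserved unless $\partial K$ is a sphere, though this is harmless because that lemma is taken as given.
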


\begin{proof}

Let $p_0\in \partial K$ and let us consider a sequence of billiard trajectories 
$$\left\{c_k: [0,\infty)\rightarrow K\right\}_{k\in \N}$$
with $c_k(0) = p_0\in \partial K$ for every $k\in \N$ and such that there is $v\in T_{p_0}(\partial K)$ for which $c_k^+(0)\rightarrow v$  in $T_{p_0}K$ as $k\rightarrow \infty$.
Here, \Cref{lemma53} implies that the sequence $\{c_k\}_{k\in \N}$ converges to some continuous curve $c:[0,\infty)\rightarrow \partial K$ uniformly on every compact set of times $[a,b]\subset [0,\infty)$ as $k\rightarrow \infty$. 

Now, let us fix $[a,b]\subset [0,\infty)$ and denote
$$\tilde c_k:= c_k\big|_{[a,b]},\quad \tilde c:= c\big|_{[a,b]}.$$
We notice that each $\tilde c_k$ is a $0$-quasigeodesic of $K$ for all $k\in \N$. Indeed, since a billiard trajectory is a concatenation of geodesic segments of $\Int(K)$ such that, at the concatenation points, the right and left tangent vectors are polar, \Cref{prop:quasigeodesic_polar} implies that each billiard trajectory in $K$ must be a $0$-quasigeodesic curve.

Then, since $\tilde c$ is the uniform limit of $\tilde c_k$, \Cref{prop:quasigeodesic_limit} implies that $\tilde c$ is $0$-quasigeodesic of $K$ as well. 
Moreover, \Cref{lemma55} implies that $\tilde c$ is a $0$-quasigeodesic of $\partial K$ with respect to its intrinsic metric. 

In conclusion, since $\partial K$ is a Riemannian manifold without boundary and a space of non-negative curvature, \Cref{prop:geodesic_quasigeodesic} yields that $\tilde c$ is a geodesic segment of $\partial K$. Hence $c:[0,\infty)\rightarrow \partial K$ is the unique geodesic of $\partial K$ starting from $p_0$ and with initial direction
$$c^+(0) = \lim_{k\rightarrow \infty} c^+_k(0) = v.$$

\end{proof}

\printbibliography

\end{document}